\documentclass[11pt,a4paper]{article}

\usepackage[english]{babel}
\usepackage[utf8]{inputenc}

\usepackage{geometry}
\usepackage{amsmath,amsthm,amssymb,amsfonts}
\usepackage{mathtools}
\usepackage{bbm}
\usepackage{nicefrac} % for \nicefrac
\usepackage{atbegshi}

\usepackage{tikz}
\usepackage{ifdraft}

\usepackage{titlesec}

\usepackage[colorlinks=true,pdfstartview=FitV,linkcolor=blue,citecolor=blue,urlcolor=blue,final]{hyperref}

\usepackage[capitalize]{cleveref}
\crefname{enumi}{}{}
\crefname{equation}{}{}

\usepackage{xcolor}
\definecolor{revblue}{RGB}{0,80,180}

\numberwithin{equation}{section}
\theoremstyle{plain}
\newtheorem{theorem}{Theorem}[section]
\newtheorem{corollary}[theorem]{Corollary}
\newtheorem{proposition}[theorem]{Proposition}
\newtheorem{lemma}[theorem]{Lemma}
\newtheorem{remark}[theorem]{Remark}

\def \R {\mathbb{R}}
\def \N {\mathbb{N}}

\newcommand{\dx}{\,{\rm d}x}
\newcommand{\dy}{\,{\rm d}y}
\newcommand{\dt}{\,{\rm d}t}
\newcommand{\ds}{\,{\rm d}s}

\renewcommand{\phi}{\varphi}

\newcommand{\ud}{\textrm{d}}

\def\Xint#1{\mathchoice
{\XXint\displaystyle\textstyle{#1}}%
{\XXint\textstyle\scriptstyle{#1}}%
{\XXint\scriptstyle\scriptscriptstyle{#1}}%
{\XXint\scriptscriptstyle\scriptscriptstyle{#1}}%
\!\int}
\def\XXint#1#2#3{{\setbox0=\hbox{$#1{#2#3}{\int}$}
\vcenter{\hbox{$#2#3$}}\kern-.5\wd0}}

\def\fint{\Xint-}

\newcommand{\MR}[1]{\href{http://www.ams.org/mathscinet-getitem?mr=#1}{MR#1}}

\newcommand{\addperiod}[1]{#1.}

\titleformat{\section}
	{\centering\normalfont\Large}{\thesection.}{0.5em}{}

\titleformat{\subsection}[runin]
	{\normalfont\bfseries}{\thesubsection.}{0.5em}{\addperiod}

\titleformat{\subsubsection}[runin]
	{\normalfont\bfseries}{\thesubsubsection.}{0.5em}{\addperiod}

\titleformat*{\subsubsection}{\normalfont\itshape}
\titleformat*{\paragraph}{\bfseries}
\titleformat*{\subparagraph}{\large\bfseries}

\title{$\chi^2$--cut-off phenomenon for Galerkin projections of Fokker--Planck equations with monomial potentials}
\author{Benny Avelin
\thanks{Department of Mathematics, Uppsala University, Sweden.
{\footnotesize \href{mailto:benny.avelin@math.uu.se}{benny.avelin@math.uu.se}.}}
\and
Gerardo Barrera
\thanks{Center for Mathematical Analysis, Geometry and Dynamical Systems, Mathematics Department,
Instituto Superior T\'ecnico, Universidade de Lisboa, Portugal.
{\footnotesize \href{mailto:gerardo.barrera.vargas@tecnico.ulisboa.pt}{gerardo.barrera.vargas@tecnico.ulisboa.pt}.}}
}

\begin{document}
\AtBeginShipoutFirst
{
\begin{tikzpicture}[remember picture, overlay]
\node[anchor=north west, xshift=1.5cm, yshift=-1.5cm] at (current page.north west){
\includegraphics[width=4cm]{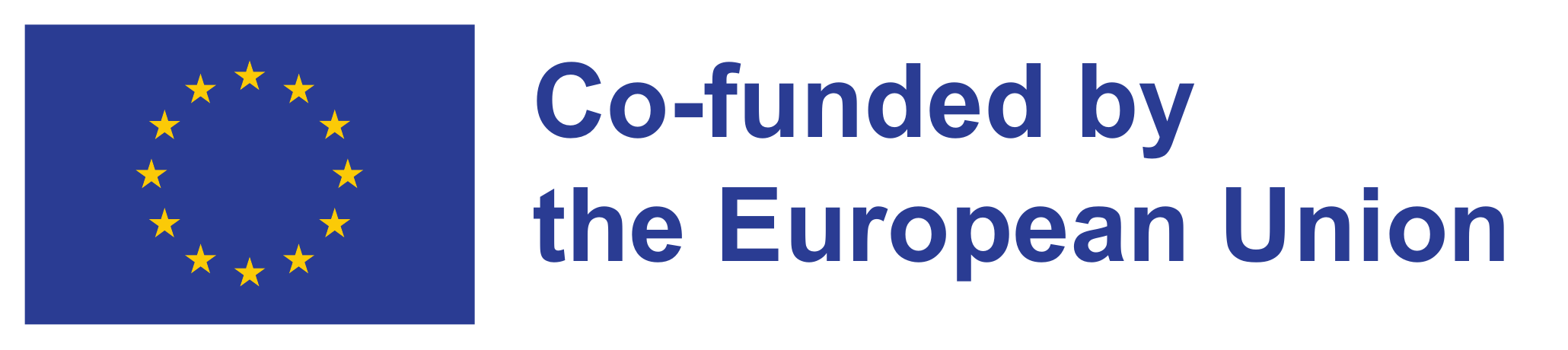}  %Change filename, size or position as needed
};
\end{tikzpicture}
}

\maketitle
\begingroup
\renewcommand{\thefootnote}{\ifcase\value{footnote}\or*\or**\fi}
\footnotetext[0]{\textbf{MSC 2020:} 37A25; 60H10; 34D10; 	82C31; 34E20.
	\textbf{Keywords:} Convergence to equilibrium; Cut-off phenomenon; Fokker--Planck equation; Galerkin projection; Koopman operator; Mixing times; Schr\"odinger equation; Sturm--Liouville operator; WKB approximation.}

\endgroup

\begin{abstract}
	In this manuscript, we  establish the existence/non-existence of the cut-off phenomenon for the Langevin--Kolmogorov random dynamics with monomial convex potentials, possible singular, and driven by a Brownian motion with small strength. We consider a  truncated $\chi^2$-distance, that is, a distance based on Galerkin projections of the eigensystem, and show that not only a refined knowledge of the eigenvalues is needed but also a refined asymptotics of the growth for the eigenfunctions of the Fokker--Planck equations associated to the Langevin--Kolmogorov dynamics. In addition, this explicit analysis yields asymptotics of the mixing times and, in some regimes, information on the limiting profile, going beyond the product condition and the cut-off window alone.
\end{abstract}

\setcounter{tocdepth}{2}
\tableofcontents

\section{{Introduction and main results}}\label{sec:main}

The study of the long-time behavior of time-evolving systems out of equilibrium (a.k.a.~ergodicity) and the quantification of the convergence and mixing to equilibrium is one of the most important problems in theoretical mathematics, modern probability and statistical physics with a vast literature, see for instance~\cite{BGL,Bogachev, Bolley,Ji,KU,booklelievreetalt,pages,Vi2009} and the references therein.

In what follows, we always consider a parameter $\varepsilon\in (0,1]$, which regulates the magnitude of the noise.
In this manuscript, we study the convergence to equilibrium of the so-called Langevin--Kolmogorov process for monomial convex potentials and driven by a Brownian motion with small noise intensity $\varepsilon$. The one-dimensional overdamped  Langevin--Kolmogorov process $(X^{(\varepsilon)}_t)_{t\geq 0}$ is defined as the unique strong solution of the following stochastic differential equation (SDE for shorthand)
\begin{equation} \label{eq:Kolmogorov}
	\left\{
	\begin{array}{r@{\;=\;}l}
		\ud X^{(\varepsilon)}_t & -V^{\prime}(X^{(\varepsilon)}_t) \dt+\sqrt{2\varepsilon}\ud B_t
		\quad\textrm{ for } \quad  t\geq  0,                                                      \\
		X^{(\varepsilon)}_0     & x_0,
	\end{array}
	\right.
\end{equation}
where $V:\R \to \R$ is a $\mathcal{C}^1$ confining potential, $x_0\in \mathbb{R}$ is a deterministic initial datum and $(B_t)_{t\geq 0}$ is a one-dimensional standard Brownian motion.
Since $V$ is convex, the SDE \cref{eq:Kolmogorov} has a unique strong solution, see~\cite[Theorem~3.5]{Maobook} or~\cite[Theorem~10.2.2]{SVbook}.
We note that the generator of the Langevin--Kolmogorov process associated
to~\cref{eq:Kolmogorov} acting on $u\in \mathcal{C}^2(\mathbb{R},\mathbb{R})$
is given by
\begin{equation}\label{eq:FP}
	\mathcal{L}_\varepsilon u:= \varepsilon u'' - V' u'= \varepsilon e^{V/\varepsilon } (e^{-V/\varepsilon } u')',
\end{equation}
and the associated Fokker--Planck operator $\mathcal{L}^\ast_\varepsilon$, which is the formal adjoint of $\mathcal{L}_\varepsilon$, is given by
\begin{equation}\label{eq:OU:fokker}
	\mathcal{L}_\varepsilon^\ast \varrho:=  ( \varrho V'+ \varepsilon \varrho')',
	\quad \textrm{ where }\quad \varrho\in \mathcal{C}^2(\mathbb{R},[0,\infty)),
\end{equation}
where $f'$ denotes the derivative of $f\in \mathcal{C}^2(\mathbb{R},[0,\infty))$ in the spatial variable $x\in \mathbb{R}$.
Furthermore, $\mathcal{C}^2$ densities of the marginals for~\cref{eq:Kolmogorov}, $\varrho_{\varepsilon,t}(x):=\varrho^{(\varepsilon)}(x,t)$, solve the partial differential equation $\partial_t \varrho = \mathcal{L}^*_\varepsilon \varrho$.
For further details, we refer to~\cite[Chapter~1]{Bogachev}.
When the operator \cref{eq:FP} has a spectral gap and  the potential $V$ satisfies the growth condition:
\begin{equation} \label{eq:spectralcondition}
	\frac{1}{2} |V^{\prime}(x)|^2 - V^{\prime\prime}(x) \to \infty \quad \textrm{ as } \quad |x| \to \infty
\end{equation}
then the operator has a discrete spectrum
\begin{equation}\label{ec:eignsys}
	\lambda_{0,\varepsilon} < \lambda_{1,\varepsilon} < \lambda_{2,\varepsilon} < \ldots \quad
	\textrm{ with corresponding eigen-basis }\quad \psi_{0,\varepsilon},\psi_{1,\varepsilon},\psi_{2,\varepsilon},\ldots
\end{equation}
for the Lebesgue space $L^2(C_\varepsilon e^{-V(x)/\varepsilon} \ud x)$, where $C_\varepsilon$ is a normalization constant, see for instance~\cite{Jac} or~\cite[Chapter~4]{BGL}. Without loss of generality, we can assume that $\lambda_{0,\varepsilon}= 0$ and $\psi_{0,\varepsilon}= 1$. In addition, we assume that $\{\psi_{k,\varepsilon}\}_{k\geq 0}$ is \textit{normalized to be orthonormal} in $L^2(C_\varepsilon e^{-V(x)/\varepsilon} \ud x)$.
For convenience when $\varepsilon=1$ we drop the second variable in the notation of the eigensystem given in \cref{ec:eignsys}, and we write
\begin{equation*}
	0=\lambda_{0} < \lambda_{1} < \lambda_{2} < \ldots \quad
	\textrm{ with corresponding eigen-basis }\quad \psi_{0}=1,\psi_{1},\psi_{2},\ldots
\end{equation*}
Our goal is the study of the so-called cut-off phenomenon for the convergence to equilibrium for the
one-dimensional over-damped Langevin--Kolmogorov process $(X^{(\varepsilon)}_t)_{t\geq 0}$
out of equilibrium when the potential $V$ has the form
\begin{equation}\label{def:V}
	V(x):=K_\gamma|x|^{\gamma+1},\quad  x\in \mathbb{R}\quad \textrm{with exponent}\quad \gamma> 0,
\end{equation}
where $K_\gamma$ is a positive constant. For simplicity and without loss of generality, along this manuscript we assume that $K_\gamma:=\frac{1}{\gamma+1}$.
We point out that the exponent $\gamma=0$ is critical, due to the lack of a discrete spectrum and only the essential spectrum exists, see~\cite[Chapter 4]{BGL} for further details.

For $\gamma>0$, we consider the process \cref{eq:Kolmogorov} with the starting condition $X^{(\varepsilon)}_0$ having distribution
\begin{equation}\label{eq:initialcond}
	\varrho_{\varepsilon,0} = \frac{\mathbbm{1}_{B_\delta(x_0)}}{|B_\delta(x_0)|},
\end{equation}
where $x_0\in \mathbb{R}$, $\delta:= \delta(\varepsilon) > 0$ will be chosen properly, $B_{\delta}(x_0)$ denotes the open interval of radius $\delta$ around $x_0$, $\mathbbm{1}_A$ is the indicator function of a given set $A$, and $|\cdot|$ denotes the length of $B_{\delta}(x_0)$, i.e., $|B_\delta(x_0)|=2\delta$. Clearly,  $\varrho_{\varepsilon,0}$ is bounded.

Under the aforementioned assumptions,
the process \cref{eq:Kolmogorov} is uniquely ergodic and
the density of its invariant Boltzmann probability  measure  is explicitly given by
\begin{equation}\label{ec:invm}
	\varrho_{\varepsilon,\infty}(x)= C_\varepsilon e^{-V(x)/\varepsilon}\quad \textrm{ for }\quad x\in \mathbb{R},
\end{equation}
where
$C_\varepsilon$ is a normalization constant ($C^{-1}_\varepsilon$ is the so-called partition function usually denoted by $\mathcal{Z}_\varepsilon$), see~\cite[Chapter 2]{Royer2007}.

We now consider the $\chi^2$-distance (a.k.a.~divergence measure) to equilibrium, that is to say,
\begin{equation*}
	\begin{split}
		d_\varepsilon (t,x_0): & = \left ( \int_{\mathbb{R}} |\varrho_{\varepsilon,t}(x) - \varrho_{\varepsilon,\infty}(x)|^2 \varrho^{-1}_{\varepsilon,\infty}(x) \dx\right )^{1/2}                                                                   \\
		                       & =\left ( \int_{\mathbb{R}} \left|\frac{\varrho_{\varepsilon,t}(x)}{\varrho_{\varepsilon,\infty}(x)} - 1\right|^2 \varrho_{\varepsilon,\infty}(x) \dx\right )^{1/2}\in [0,\infty)\quad \textrm{for each}\quad t\geq 0,
	\end{split}
\end{equation*}
which with the help of \cref{lem:separation} can be written as
\begin{equation*}
	d_\varepsilon(t,x_0) = \left(\sum_{k=1}^\infty \left ( \int_{\mathbb{R}} \varrho_{\varepsilon,t}(x) \psi_{k,\varepsilon}(x) \dx \right )^2\right)^{1/2},\quad t\geq 0.
\end{equation*}
We consider instead a truncated version of the $\chi^2$-distance to equilibrium, retaining only the first $n$ modes. This captures the dominant behavior and yields explicit, computable profiles; it is also the natural object for Galerkin-based numerical methods for the Fokker--Planck equation. More precisely,
for each $n\in \mathbb{N}:=\{0,1,2,\ldots,\}$
we consider the Galerkin projection
$\Pi_{n,\varepsilon}$ to be the spectral projector onto the space spanned by the first $(n+1)$-eigenfunctions of \cref{eq:FP}, that is,
\[
	\Pi_{n,\varepsilon} f := \sum_{k=0}^n \langle f,\psi_{k,\varepsilon}\rangle_{\varrho_{\varepsilon,\infty}} \psi_{k,\varepsilon}.
\]
We then define
the truncated $\chi^2$-distance  to equilibrium in that spectral subspace as follows:
\begin{equation}\label{eq:Ga}
	d_{n,\varepsilon} (t,x_0):=\left ( \int_{\mathbb{R}} \left \lvert \Pi_{n,\varepsilon}\left [\frac{\varrho_{\varepsilon,t}}{\varrho_{\varepsilon,\infty}} \right ](x) - 1 \right \lvert^2 \varrho_{\varepsilon,\infty}(x) \dx \right )^{1/2},\quad t\geq 0.
\end{equation}
When the initial point $x_0$ is fixed and clear from the context, we occasionally write $d_{n,\varepsilon}(t)$ instead of $d_{n,\varepsilon}(t,x_0)$; similarly for $d_\varepsilon(t)$.
By \cref{cor:separation} one can see that
\begin{equation} \label{eq:dnvarepsilon_intro}
	d_{n,\varepsilon} (t,x_0)= \left(\sum_{k=1}^n e^{- 2 \lambda_{k,\varepsilon} t} \left ( \int_{\mathbb{R}} \varrho_{\varepsilon,0}(x) \psi_{k,\varepsilon}(x) \dx \right )^2\right)^{1/2},\quad t\geq 0.
\end{equation}
Note that the map $t\mapsto d_{n,\varepsilon} (t,x_0) $ is continuous and strictly decreasing as $t $ increases. Moreover, for any $n\in \mathbb{N}$, $\varepsilon>0$ and $x_0\in \mathbb{R}$ fixed we have
$d_{n,\varepsilon} (t,x_0)\to 0$, as $t\to \infty$.
Hence, the so-called $\eta$-mixing time is well-defined and unique, that is, given a prescribed error $\eta>0$,
\begin{equation*}
	\tau_{\mathsf{mix}}(\eta;\varepsilon,x_0,n):=\inf\{t\geq 0: d_{n,\varepsilon} (t,x_0)\leq \eta\}.
\end{equation*}

The main results of this manuscript concern the so-called cut-off phenomenon for the Galerkin projection distance given in \cref{eq:Ga}.
The cut-off phenomenon was coined by D.~Aldous and P.~Diaconis
in the study of the convergence to equilibrium in the total variation distance of Markov chains models of card-shuffling, the Ehrenfest urn and random transpositions, see~\cite{AD,DI}. Roughly speaking, the distance of the current state to its equilibrium behaves as an approximate step function that remains
close to its maximal value for a while, and then suddenly falls to zero as the running time parameter reaches a critical
threshold, which corresponds to the so-called mixing time.
In recent times, the cut-off phenomenon has been studied from others distances, and it has been shown to appear in diverse fields such as: coagulation-fragmentation systems~\cite{Murray}, deep limits of neural networks~\cite{Avelin}, quantum Markov chains~\cite{Kastoryano12}, open quadratic fermionic systems~\cite{Vernier20}, stochastic simulations of chemical kinetics~\cite{BOK10}, viscous energy shell-model of turbulence~\cite{BHPPshell}, and random quantum circuits~\cite{Oh}.

In the sequel, we rigorously define  cut-off, window cut-off and profile cut-off for \cref{eq:Ga}.
Recall that the dimension of the Galerkin projector $\Pi_{n,\varepsilon}$ is fixed, that is, $n\in \mathbb{N}$ is fixed, and $\varepsilon>0$ is the complexity parameter.
We say that \cref{eq:Ga} exhibits \textit{cut-off} iff there exists a cut-off time $t_\varepsilon>0$ (it is not required that $t_\varepsilon\to \infty$ as $\varepsilon\to 0$ and in some cases $t_\varepsilon\to 0$ as $\varepsilon\to 0$) such that
\begin{equation}\label{eq:cutdef}
	\lim\limits_{\varepsilon\to 0}d_{n,\varepsilon}(ct_\varepsilon,x_0)=\begin{cases}
		\infty & \textrm{ for }\quad c\in (0,1),        \\
		0      & \textrm{ for } \quad  c\in (1,\infty).
	\end{cases}
\end{equation}
We remark that the usual definition of cut-off phenomenon the upper limit in \cref{eq:cutdef} is required to be equal to 1 instead of $\infty$, see for instance~\cite{AD,BBF1,BY,DI,Levin}. However, since we are working with the $\chi^2$-distance, which is unbounded, we adapt the definition accordingly.
We say that \cref{eq:Ga} exhibits \textit{window cut-off} iff there exist a cut-off time  $t_\varepsilon>0$ and a time-window $w_\varepsilon>0$ such that $w_\varepsilon/t_\varepsilon
	\to 0$ as $\varepsilon\to 0$, and for any $r\in \mathbb{R}$ we have
\begin{equation}\label{eq:def:wcut}
	\begin{split}
		\limsup\limits_{\varepsilon\to 0}d_{n,\varepsilon}(t_\varepsilon+rw_\varepsilon,x_0) & =:\mathfrak{p}_+(r), \\
		\liminf\limits_{\varepsilon\to 0}d_{n,\varepsilon}(t_\varepsilon+rw_\varepsilon,x_0) & =:\mathfrak{p}_-(r),
	\end{split}
\end{equation}
satisfying $\mathfrak{p}_-(-\infty)=\infty$ and $\mathfrak{p}_+(\infty)=0$. If additionally, $\mathfrak{p}(r):=\mathfrak{p}_+(r)=\mathfrak{p}_{-}(r)$ for all $r\in \mathbb{R}$, that is,
\begin{equation}\label{def:pro}
	\lim\limits_{\varepsilon\to 0}d_{n,\varepsilon}(t_\varepsilon+rw_\varepsilon,x_0) =\mathfrak{p}(r)\quad \textrm{ for all }\quad r\in \mathbb{R},
\end{equation}
we say that \cref{eq:Ga} exhibits \textit{profile cut-off} with profile function $\mathfrak{p}$.
We point out that $t_{\varepsilon}$, $w_\varepsilon$ and $\mathfrak{p}$ depend on the initial condition $x_0$ and on the dimension of the Galerkin projector $n$.
If additionally, the profile function $\mathfrak{p}$ is strictly decreasing and continuous  we have for any $\eta \in (0,\infty)$ that
\begin{equation*}
	\tau_{\mathsf{mix}}(\eta;\varepsilon,x_0,n)=t_\varepsilon+\mathfrak{p}^{-1}(\eta)w_\varepsilon+\textrm{o}(w_\varepsilon)\quad \varepsilon\to 0,
\end{equation*}
where $\mathfrak{p}^{-1}$ is the inverse function of $
	\mathfrak{p}$, see Lemma E.1 in~\cite{BARESQ}.
\begin{remark}[Cut-off phenomenon: window and profile]
	\hfill

	\noindent
	The upper and lower limits
	in~\cref{eq:def:wcut} is commonly referred to as window cut-off in the context of Markov processes when the distance $d_{n,\varepsilon}$ is replaced by the total variation distance and the complexity parameter $\varepsilon>0$ is the cardinality of the state space.
	Also, the non-existence of the timescale satisfying~\cref{eq:cutdef} is commonly referred to as there being no cut-off phenomenon, see for instance~\cite{AD,DI} and~\cite[Chapter~18]{Levin}.
\end{remark}

In \cref{thm:main,thm:main:sub,thm:main:2} below we assume that $V$ is the potential defined in \cref{def:V}.
We recall that $\lambda_{k,\varepsilon}$ is the $k$-th non-zero eigenvalue of the Fokker--Planck operator defined in \cref{eq:FP}, and $\lambda_k$ is the $k$-th non-zero eigenvalue with $\varepsilon=1$.

The first main result of this manuscript is the following, which establishes a window cut-off phenomenon for $\gamma\in(1/3,1)$.

\begin{theorem}[Window cut-off phenomenon for Galerkin projection, $\gamma\in (1/3,1)$]\label{thm:main}
	\hfill

	\noindent
	Assume that $\gamma \in (1/3,1)$, $\delta(\varepsilon) = \varepsilon^{\frac{1-\gamma}{1+\gamma}}$, and $n\in \mathbb{N}$.

	\noindent
	For any $x_0\neq 0$  window cut-off holds true in the sense of~\cref{eq:def:wcut},
	where $d_{n,\varepsilon}$ is defined in \cref{eq:Ga}, $t_\varepsilon$
	is defined as the unique solution to
	\begin{equation}\label{eq:defThat}
		d_{1,\varepsilon} ( t_\varepsilon,x_0) = 1,
	\end{equation}
	in other words,
	\begin{equation*}
		t_\varepsilon = \frac{1}{\lambda_{1,\varepsilon}} \log \left  ( \left \lvert \int \varrho_{\varepsilon,0}(x) \psi_{1,\varepsilon}(x) \dx  \right \rvert \right ),
	\end{equation*}
	and the time window is given by $w_\varepsilon:= \varepsilon^{\frac{1-\gamma}{1+\gamma}}$.

	\noindent
	For $x_0 = 0$, let
	\begin{equation*}
		E_n:=\{1\leq j\leq n:\ \psi_j(0)\neq 0\}.
	\end{equation*}
	If $E_n=\emptyset$, then $d_{n,\varepsilon}(t,0)=0$ for all $t\geq 0$. Otherwise, letting $m:=\max E_n$, the centered initial datum exhibits window cut-off (indeed profile cut-off) with cut-off time determined by the condition
	\begin{equation*}
		e^{-2\lambda_{m,\varepsilon} t_\varepsilon}\left(\int \varrho_{\varepsilon,0}(x)\psi_{m,\varepsilon}(x)\,\dx\right)^2=1,
	\end{equation*}
	the same time window $w_\varepsilon:=\varepsilon^{\frac{1-\gamma}{1+\gamma}}$, and profile function $\mathfrak p(r)=e^{-\lambda_m r}$.
\end{theorem}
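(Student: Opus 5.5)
The plan is to reduce everything to the case $\varepsilon=1$ by the exact scaling of the monomial potential. After that, the proof comes down to a WKB-type growth asymptotic for the eigenfunctions $\psi_k$ at infinity.

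\textbf{Scaling.} Put $a:=\frac{1}{1+\gamma}$ and $x=\varepsilon^{a}y$. Then $V(x)/\varepsilon=V(y)$, and a direct computation from \cref{eq:FP} gives $\mathcal L_\varepsilon=\varepsilon^{-\frac{1-\gamma}{1+\gamma}}\mathcal L_1$ in the $y$ variable. Hence
\begin{equation*}
\lambda_{k,\varepsilon}=\varepsilon^{-\frac{1-\gamma}{1+\gamma}}\lambda_k=\frac{\lambda_k}{w_\varepsilon},\qquad \psi_{k,\varepsilon}(x)=\psi_k(\varepsilon^{-a}x),
\end{equation*}
the second identity holding with the orthonormalisation preserved. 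Consequently the coefficients in \cref{eq:dnvarepsilon_intro} are
\begin{equation*}
a_{k,\varepsilon}:=\int\varrho_{\varepsilon,0}\psi_{k,\varepsilon}\dx=\fint_{B_{\rho}(y_0)}\psi_k(y)\dy,
\end{equation*}
where $y_0=\varepsilon^{-a}x_0$ and $\rho=\delta\varepsilon^{-a}=\varepsilon^{-\frac{\gamma}{1+\gamma}}$. Writing $t=t_\varepsilon+rw_\varepsilon$ and $s_\varepsilon:=t_\varepsilon/w_\varepsilon$, we obtain
\begin{equation*}
d_{n,\varepsilon}(t,x_0)^2=\sum_{k=1}^n e^{-2\lambda_k(s_\varepsilon+r)}a_{k,\varepsilon}^2.
\end{equation*}
The $k=1$ term equals exactly $e^{-2\lambda_1 r}$ by the definition of $t_\varepsilon$. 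This already gives $\mathfrak p_-(r)\ge e^{-\lambda_1 r}\to\infty$ as $r\to-\infty$. For the upper bound it suffices to show that, for each $k\le n$,
\begin{equation*}
\log|a_{k,\varepsilon}|-\frac{\lambda_k}{\lambda_1}\log|a_{1,\varepsilon}|=O(1).
\end{equation*}
Since $\lambda_k\ge\lambda_1>0$, this gives $\mathfrak p_+(r)\le C\sum_k e^{-\lambda_k r}\to0$ as $r\to\infty$. By the parity of $V$, each $\psi_k$ is even or odd, so we may take $x_0>0$.

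\textbf{WKB asymptotics (the main obstacle).} For the eigenfunctions I would prove the following asymptotic as $y\to+\infty$, with $\Phi(y)=\frac{y^{1-\gamma}}{1-\gamma}$:
\begin{equation*}
\psi_k(y)=c_k\,e^{\lambda_k\Phi(y)}\bigl(1+o(1)\bigr),\qquad c_k\neq0.
\end{equation*}
The idea is to write $S=\log\psi_k$ on a half-line free of zeros, which exists by Sturm--Liouville theory since there are finitely many zeros. Then $S'$ solves the Riccati equation
\begin{equation*}
S''+(S')^2-y^\gamma S'+\lambda_k=0.
\end{equation*}
Iterating this equation gives
\begin{equation*}
S'=\lambda_k y^{-\gamma}+\lambda_k^2y^{-3\gamma}-\gamma\lambda_k y^{-2\gamma-1}+\text{h.o.t.}
\end{equation*}
The correction $\lambda_k^2y^{-3\gamma}$ is integrable at infinity exactly when $\gamma>1/3$. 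This is where the hypothesis $\gamma\in(1/3,1)$ enters: the remainder $S-\lambda_k\Phi$ converges to a finite constant $\log c_k$. The rigorous part is to control the Riccati remainder by a fixed-point or comparison argument. One must also show that the eigenfunction selects the growing branch: the decaying solution behaves like $e^{V}$ times a decaying factor, and it is excluded because it is not in $L^2(e^{-V})$.

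\textbf{Averaging over the ball.} On $B_\rho(y_0)$ we have $\rho/y_0=\varepsilon^{\frac{1-\gamma}{1+\gamma}}|x_0|^{-1}\to0$ and
\begin{equation*}
\Phi(y)-\Phi(y_0)=\Phi'(y_0)(y-y_0)+o(1),\qquad \Phi'(y_0)\rho=|x_0|^{-\gamma}.
\end{equation*}
Therefore
\begin{equation*}
a_{k,\varepsilon}=c_k e^{\lambda_k\Phi(y_0)}\,I_k\,(1+o(1)),\qquad I_k=\tfrac12\int_{-1}^1 e^{\lambda_k|x_0|^{-\gamma}u}\,\mathrm du\in(0,\infty).
\end{equation*}
In $\log|a_{k,\varepsilon}|-\frac{\lambda_k}{\lambda_1}\log|a_{1,\varepsilon}|$ the large terms $\lambda_k\Phi(y_0)$ cancel, leaving a bounded quantity, which proves the window cut-off. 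This also identifies $t_\varepsilon\to\frac{|x_0|^{1-\gamma}}{1-\gamma}$.

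\textbf{The case $x_0=0$.} Odd $\psi_j$, that is $j\notin E_n$, have zero average over the symmetric ball, so $d_{n,\varepsilon}\equiv0$ if $E_n=\emptyset$. Otherwise, by evenness and the asymptotic above, Laplace's method on $[0,\rho]$ with $\rho\to\infty$ gives
\begin{equation*}
a_{j,\varepsilon}=\frac{c_j}{\lambda_j}\,\rho^{\gamma-1}e^{\lambda_j\Phi(\rho)}(1+o(1)).
\end{equation*}
With $s_\varepsilon=\lambda_m^{-1}\log|a_{m,\varepsilon}|$, for $j\in E_n$ with $j<m$ we get
\begin{equation*}
\log|a_{j,\varepsilon}|-\frac{\lambda_j}{\lambda_m}\log|a_{m,\varepsilon}|=(\gamma-1)\Bigl(1-\frac{\lambda_j}{\lambda_m}\Bigr)\log\rho+O(1)\to-\infty,
\end{equation*}
because $\gamma<1$ and $\lambda_j<\lambda_m$. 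So every term except $j=m$ vanishes in the limit, and
\begin{equation*}
d_{n,\varepsilon}(t_\varepsilon+rw_\varepsilon,0)\to e^{-\lambda_m r},
\end{equation*}
which is the claimed profile cut-off.
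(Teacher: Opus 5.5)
Your proposal is correct and follows essentially the paper's route. The steps match: the exact scaling reduction to $\varepsilon=1$; WKB growth of $\psi_k$, with the $L^2(e^{-V})$ condition selecting the $e^{\lambda_k\Phi}$ branch; averaging over the ball for $x_0\neq 0$ and a Laplace-type tail estimate for $x_0=0$; and the same mode-by-mode comparison at $t_\varepsilon$. The paper's polar-coordinate (Pr\"ufer-angle) barrier argument is precisely a comparison argument for your Riccati equation, since $w=\tan\theta$ satisfies $w'=-w^2+fw-g$ and the barriers are built from the roots of $w^2-fw+g=0$.

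The one real gain is that you keep the sharp form $\psi_k\sim c_k e^{\lambda_k\Phi}$ rather than the paper's two-sided bound $\psi_k\approx e^{\lambda_k\Phi}$; the paper's argument also gives this, since $r$, $\theta$ and $S_2$ all converge when $\gamma>1/3$. This makes $|a_{k,\varepsilon}|/|a_{1,\varepsilon}|^{\lambda_k/\lambda_1}$ converge, so for $x_0\neq 0$ you get a genuine profile $\bigl(\sum_{k\le n}\kappa_k e^{-2\lambda_k r}\bigr)^{1/2}$ with $\kappa_1=1$, which is stronger than the window cut-off stated.
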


The proof  is given in \cref{sec:proofuno}.

\begin{remark}[The \emph{highest} mode $m=\max E_n$ controls the cut-off at $x_0=0$]\hfill

\noindent
	Intuitively, the slowest mode (smallest $\lambda_1$) should dominate, since all other terms decay faster for fixed $\varepsilon$.  This reasoning is valid at fixed $\varepsilon$ but fails in the small-noise limit $\varepsilon\to 0$; by \cref{lem:pointwise_intro}, for $x_0=0$ and any even mode $j$ the Fourier coefficient satisfies
	\[
	  |c_{j,\varepsilon}| \approx \varepsilon^{\,\gamma(1-\gamma)/(1+\gamma)}
	  \exp\!\Bigl(\tfrac{\lambda_j}{1-\gamma}\,\varepsilon^{\gamma(\gamma-1)/(\gamma+1)}\Bigr).
	\]
	Since $\gamma(\gamma-1)/(\gamma+1)<0$ the exponential blows up as $\varepsilon\to 0$, and faster for larger $\lambda_j$.  At the cut-off time $t_\varepsilon$ defined by normalizing the $m=\max E_n$ mode, every lower mode $j\in E_n$ with $j<m$ satisfies $e^{-2\lambda_{j,\varepsilon}t_\varepsilon}|c_{j,\varepsilon}|^2\to 0$, since the ratio $\lambda_j/\lambda_m<1$ produces a net decay.  If instead $m'=\min E_n$ were used, higher modes would blow up to $+\infty$ at $t_\varepsilon$, which cannot define a valid profile.
\end{remark}

\begin{remark}[Explicit asymptotic bounds for the cut-off time for $\gamma\in (1/3,1)$]\hfill
	\label{rem:timecutoff}

	\noindent
	In order to explicitly compute $t_\varepsilon$ in \cref{eq:defThat} we need to know $\psi_{1,\varepsilon}$, which in general is a difficult task. For $\gamma\in (1/3,1)$ \cref{lem:time_intro} below yields for small enough $\varepsilon$ that there exists a constant $C > 1$ depending on $\gamma$ such that
	\begin{equation*}
		\frac{\log(1/C)}{\lambda_{1}} \varepsilon^{\frac{1-\gamma}{1+\gamma}}+ \frac{|x_0|^{1-\gamma}}{1-\gamma}
		\leq
		t_\varepsilon
		\leq
		\frac{\log(C)}{\lambda_{1} } \varepsilon^{\frac{1-\gamma}{1+\gamma}} + \frac{|x_0|^{1-\gamma}}{1-\gamma},
	\end{equation*}
	where $\lambda_1$ is the first non-zero eigenvalue of the Fokker--Planck operator defined in \cref{eq:FP} with $\varepsilon=1$. In particular, $t_\varepsilon=\mathsf{O}(1)$, as $\varepsilon\to 0$.
\end{remark}

The second main result of this manuscript establishes the occurrence of a profile cut-off phenomenon for all 
$\gamma \in (0,1/3]$.
\begin{theorem}[Profile cut-off phenomenon for Galerkin projection,
		$\gamma\in (0,1/3\rbrack$]\label{thm:main:sub}
	\hfill

	\noindent
	Assume that $\gamma \in (0,1/3]$, $\delta(\varepsilon) = \varepsilon^{\frac{1-\gamma}{1+\gamma}}$, and $n\in \mathbb{N}$.

	\noindent
	For any $x_0\neq 0$  profile cut-off holds true in the sense of~\cref{def:pro},
	where $d_{n,\varepsilon}$ is defined in \cref{eq:Ga}, and for sufficiently small $\varepsilon>0$ the cut-off time $t_\varepsilon$
	is the unique solution to
	\begin{equation}\label{eq:defThatnuev}
		e^{-2\lambda_{n,\varepsilon} t_\varepsilon}
		\left ( \int \varrho_{\varepsilon,0}(x) \psi_{n,\varepsilon}(x) \dx  \right )^2 = 1,
	\end{equation}
	equivalently,
	\begin{equation*}
		t_\varepsilon = \frac{1}{\lambda_{n,\varepsilon}} \log \left  ( \left \lvert \int \varrho_{\varepsilon,0}(x) \psi_{n,\varepsilon}(x) \dx  \right \rvert \right ).
	\end{equation*}
	The time window is given by $w_\varepsilon:= \varepsilon^{\frac{1-\gamma}{1+\gamma}}$ and the profile function $\mathfrak{p}(r):=e^{-\lambda_n r}$, $r\in \mathbb{R}$.

	\noindent
	For $x_0 = 0$, let
	\begin{equation*}
		E_n:=\{1\leq j\leq n:\ \psi_j(0)\neq 0\}.
	\end{equation*}
	If $E_n=\emptyset$, then $d_{n,\varepsilon}(t,0)=0$ for all $t\geq 0$. Otherwise, letting $m:=\max E_n$, the centered initial datum exhibits profile cut-off with cut-off time determined by
	\begin{equation*}
		e^{-2\lambda_{m,\varepsilon} t_\varepsilon}\left(\int \varrho_{\varepsilon,0}(x)\psi_{m,\varepsilon}(x)\,\dx\right)^2=1,
	\end{equation*}
	the same time window $w_\varepsilon:=\varepsilon^{\frac{1-\gamma}{1+\gamma}}$, and profile function $\mathfrak p(r)=e^{-\lambda_m r}$.
\end{theorem}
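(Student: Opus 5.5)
The plan is to reduce everything to the spectral formula \cref{eq:dnvarepsilon_intro} and then rescale. Because $V$ is homogeneous, the map $x\mapsto \varepsilon^{1/(1+\gamma)}y$ sends $\mathcal L_\varepsilon$ to $\varepsilon^{(1-\gamma)/(1+\gamma)}$ times a multiple of $\mathcal L_1$. This gives
\[
\lambda_{k,\varepsilon}=\varepsilon^{-\frac{1-\gamma}{1+\gamma}}\lambda_k ,\qquad \psi_{k,\varepsilon}(x)=\psi_k\bigl(\varepsilon^{-\frac{1}{1+\gamma}}x\bigr).
\]
Write $c_{k,\varepsilon}:=\int\varrho_{\varepsilon,0}\psi_{k,\varepsilon}\dx$ and set $s=t/w_\varepsilon$ with $w_\varepsilon=\varepsilon^{\frac{1-\gamma}{1+\gamma}}$. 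Then $e^{-2\lambda_{k,\varepsilon}t}=e^{-2\lambda_k s}$. At $t=t_\varepsilon+rw_\varepsilon$ we therefore have
\[
d_{n,\varepsilon}^2=\sum_{k=1}^n e^{-2\lambda_k r}\,e^{-2\lambda_k t_\varepsilon/w_\varepsilon}c_{k,\varepsilon}^2 .
\]
The whole argument comes down to the asymptotics of $c_{k,\varepsilon}$. For these I would invoke the pointwise WKB growth of the eigenfunctions, \cref{lem:pointwise_intro}, stated later in the paper: $|\psi_k(y)|\approx |y|^{-\gamma/2}\exp\bigl(\tfrac{|y|^{1+\gamma}}{2(1+\gamma)}-\tfrac{\lambda_k}{1-\gamma}|y|^{1-\gamma}\bigr)\cdot(\text{const})$, up to $1+o(1)$ as $|y|\to\infty$. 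The initial interval has width $\delta=w_\varepsilon$, which in rescaled variables is the interval $\bigl[\varepsilon^{-1/(1+\gamma)}(x_0\pm\delta)\bigr]$. The $\exp(\tfrac{|y|^{1+\gamma}}{2(1+\gamma)})$ factor is common to all $k$ up to the window. So the ratio between modes is governed by $\exp\bigl(-\tfrac{\lambda_k}{1-\gamma}\varepsilon^{-\frac{1-\gamma}{1+\gamma}}|x_0|^{1-\gamma}\bigr)$, with corrections coming from the variation of $|y|^{1-\gamma}$ across the window.

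Given $|c_{k,\varepsilon}|$, the time at which mode $k$ alone equals $1$ is $T_k:=w_\varepsilon\lambda_k^{-1}\log|c_{k,\varepsilon}|$. It is the common $\varepsilon$-dependent part $A_\varepsilon/\lambda_k$ (from the Gaussian-like factor) plus $|x_0|^{1-\gamma}/(1-\gamma)$ plus $O(w_\varepsilon)$, and $A_\varepsilon\to\infty$. Hence $T_n>T_k$ for $k<n$. The difference $(T_n-T_k)/w_\varepsilon\approx A_\varepsilon(\lambda_k^{-1}-\lambda_n^{-1})/w_\varepsilon$ is to be compared with the window.

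The key computation is to track the exponent $\varepsilon^{-\frac{1-\gamma}{1+\gamma}}$-scale terms against the window factor $\varepsilon^{\frac{1-\gamma}{1+\gamma}}$. The $1/3$ threshold should come from this comparison. The relevant correction is the second-order (in $\delta$) term of $|y|^{1+\gamma}$ over the interval, which in rescaled variables is of size $\varepsilon^{-1}\delta^2=\varepsilon^{\frac{1-3\gamma}{1+\gamma}}$; this blows up exactly when $\gamma<1/3$. In the regime $\gamma\le 1/3$ I expect this to make $(T_n-T_k)/w_\varepsilon\to\infty$ for every $k<n$. Then at $t_\varepsilon=T_n$ the terms $k<n$ contribute $e^{-2\lambda_k r}e^{-2\lambda_k(T_n-T_k)/w_\varepsilon}\to0$ for each fixed $r$, and the $k=n$ term is exactly $e^{-2\lambda_n r}$. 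This yields the profile $\mathfrak p(r)=e^{-\lambda_n r}$.

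For $x_0=0$ the analysis changes. Here $\varrho_{\varepsilon,0}$ is symmetric, so odd modes vanish; more generally only the $j\in E_n$ survive. Now $c_{j,\varepsilon}$ is the average of $\psi_j$ over $|y|\le \varepsilon^{-1/(1+\gamma)}\delta=\varepsilon^{-\gamma/(1+\gamma)}$, which grows at the rate given in the remark (exponent $\tfrac{\lambda_j}{1-\gamma}\varepsilon^{\gamma(\gamma-1)/(\gamma+1)}$). The same comparison, with $m=\max E_n$, shows that lower modes vanish at $t_\varepsilon$, which gives the profile $e^{-\lambda_m r}$. If $E_n=\emptyset$, all $c_{j,\varepsilon}$ with $j\le n$ vanish by parity/nodal structure, hence $d_{n,\varepsilon}\equiv0$.

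The main obstacle is obtaining the eigenfunction asymptotics uniformly, with relative error $o(1)$, across the moving interval and integrated against the indicator. That error must be sharp enough that the $\lambda_k$-dependent separation $(T_n-T_k)/w_\varepsilon$ is not swamped by the error terms. I would first try a Liouville–Green/WKB transformation of the Schrödinger form of $\mathcal L_1$ with explicit error bounds, applying Laplace's method to the integral over the interval.
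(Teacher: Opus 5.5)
Your skeleton matches the paper's. After rescaling one has exactly $d_{n,\varepsilon}^2(t_\varepsilon+rw_\varepsilon,x_0)=\sum_{k=1}^n e^{-2\lambda_k r}e^{-2\lambda_k(T_n-T_k)/w_\varepsilon}$, and everything hinges on showing $(T_n-T_k)/w_\varepsilon\to+\infty$ for $k<n$.

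\textbf{The eigenfunction asymptotics are wrong.} The $L^2(e^{-V})$-eigenfunction of $\mathcal L_1$ is the slowly varying (recessive) solution. By \cref{thm:main:3prime}, $\psi_k(y)\approx\exp\bigl(\sum_{i\le N}S_i(y)\bigr)$ with $S_1(y)=\lambda_k\frac{|y|^{1-\gamma}}{1-\gamma}$. So there is no factor $e^{|y|^{1+\gamma}/(2(1+\gamma))}$, and the $\lambda_k$-term carries a plus sign. Your formula is, up to a power of $|y|$, the \emph{dominant} solution $v$ of \cref{eq:Wittenprime}, not $\psi_k=e^{V/2}v$ with $v$ recessive. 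The lemma you cite, \cref{lem:pointwise_intro}, concerns Fourier coefficients for $\gamma>1/3$ and agrees with the plus sign; for $\gamma\le1/3$ the relevant statement is \cref{lem:pointwise_smallgamma}.

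\textbf{With your formula the argument proves the opposite.} You would get $\log|c_{k,\varepsilon}|=\frac{|x_0|^{1+\gamma}}{2(1+\gamma)\varepsilon}-\lambda_k\varepsilon^{\frac{\gamma-1}{1+\gamma}}\frac{|x_0|^{1-\gamma}}{1-\gamma}+\dots$. Then $T_k/w_\varepsilon$ is \emph{decreasing} in $k$, and $(T_n-T_k)/w_\varepsilon\approx\frac{|x_0|^{1+\gamma}}{2(1+\gamma)\varepsilon}(\lambda_n^{-1}-\lambda_k^{-1})\to-\infty$. Your displayed difference has its sign flipped, and ``hence $T_n>T_k$'' does not follow. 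Mode $1$ would dominate and $t_\varepsilon\to\infty$, whereas in truth $t_\varepsilon\to|x_0|^{1-\gamma}/(1-\gamma)$.

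Your threshold mechanism is also inverted. The quantity $\varepsilon^{-1}\delta^2=\varepsilon^{\frac{1-3\gamma}{1+\gamma}}$ tends to $0$ for $\gamma<1/3$ and blows up only for $\gamma>1/3$. Moreover, as a common positive term it could only favor mode $1$.

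\textbf{The missing idea.} Since $T_k/w_\varepsilon=\lambda_k^{-1}\log|c_{k,\varepsilon}|$, every contribution to $\log|c_{k,\varepsilon}|$ that is linear in $\lambda_k$ cancels exactly in $(T_n-T_k)/w_\varepsilon$. This includes the leading term $\lambda_k\varepsilon^{\frac{\gamma-1}{1+\gamma}}\frac{|x_0|^{1-\gamma}}{1-\gamma}$ and all terms with $j-d=1$ in \cref{eq:indexz}. So the separation must come from the part of $\log\psi_k$ that is nonlinear in $\lambda_k$.

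Its first term lies in the second WKB correction. Since $S_2'=\frac{1}{f}S_1''+\frac{1}{f}(S_1')^2$ contains $+\lambda_k^2|y|^{-3\gamma}$, we get $S_2\ni\frac{\lambda_k^2}{1-3\gamma}|y|^{1-3\gamma}$. Evaluating at $y=\varepsilon^{-1/(1+\gamma)}x_0$ gives
\[
\frac{T_n-T_k}{w_\varepsilon}=\frac{\lambda_n-\lambda_k}{1-3\gamma}\,|x_0|^{1-3\gamma}\,\varepsilon^{\frac{3\gamma-1}{1+\gamma}}+o\bigl(\varepsilon^{\frac{3\gamma-1}{1+\gamma}}\bigr)+\mathsf{O}(1).
\]
This tends to $+\infty$ exactly when $\gamma<1/3$; the positivity of $A_{2,2}=1$ is what puts mode $n$, not mode $1$, on top. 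At $\gamma=1/3$ the resonant term $\lambda_k^2\log|y|$ gives $\frac34(\lambda_n-\lambda_k)\log(1/\varepsilon)$. For $\gamma>1/3$ the difference stays bounded, which is why \cref{thm:main} only gives a window.

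\textbf{What rigor actually requires.} You need $\log\psi_k$ only up to an $\mathsf{O}(1)$ error, not a relative error $1+o(1)$. Keep $N=\lceil\frac{1+\gamma}{2\gamma}\rceil$ WKB terms with bounded remainder. Use the ordering of the exponents $\alpha_{j,d}$ to check that the $(2,0)$ term dominates all non-cancelling ones. The choice $\delta=w_\varepsilon$ makes the oscillation of $S_1$ over the rescaled ball $\mathsf{O}(1)$, so no Laplace method is needed.

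Your Liouville--Green idea with Olver-type error bounds is a legitimate substitute for the paper's polar-coordinate barrier argument. It must be applied to the recessive $v=e^{-V/2}\psi_k$, and $\int\sqrt{Q}$ must be expanded until the remainder is integrable.

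\textbf{The centered case.} Here the separation has a different source: the common prefactor $\varepsilon^{\frac{\gamma(1-\gamma)}{1+\gamma}}$ in \cref{eq:center-even}. Because its logarithm tends to $-\infty$, dividing by $\lambda_j$ versus $\lambda_m$ yields $2\frac{\gamma(1-\gamma)}{1+\gamma}\bigl(1-\frac{\lambda_j}{\lambda_m}\bigr)\log\varepsilon\to-\infty$. ``The same comparison'' in your sketch does not identify this.
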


The proof  is provided in \cref{sec:menor}.

\begin{remark}
	\textit{For $x_0\neq 0$} one might expect the cut-off time to be set by the spectral gap $\lambda_1$.  However, for $\gamma\in(0,1/3]$ the \emph{leading} WKB term in $c_{k,\varepsilon}$ (see \cref{lem:pointwise_smallgamma}) contributes 
	\[
		\exp(\varepsilon^{(\gamma-1)/(1+\gamma)}\; \lambda_k\; |x_0|^{1-\gamma}/(1-\gamma)),
	\]
	which blows up at the same rate for all $k$ but at different heights.  It is the \emph{sub-leading} WKB correction (involving $\lambda_k^2$) that separates modes: for any $i<n$, the $i$-th mode's contribution at the $\lambda_n$-defined cut-off time satisfies for some constant $C_\gamma>0$ that
	\[
	  \log T_{i,\varepsilon}
	  \;\leq\; \lambda_i(\lambda_i-\lambda_n)\,C_\gamma\,
	  \varepsilon^{(3\gamma-1)/(1+\gamma)}\,|x_0|^{1-3\gamma} + \mathsf{O}(1) \;\to\; -\infty,
	\]
	since $\lambda_i(\lambda_i-\lambda_n)<0$ for $i<n$ and the exponent $(3\gamma-1)/(1+\gamma)<0$.  
\end{remark}

\begin{remark}[Explicit asymptotic bounds for the cut-off time for $\gamma\in (0,1/3\rbrack$]\hfill

	\noindent
	The mixing time $t_\varepsilon$ in \cref{eq:defThatnuev} can be explicitly estimated via the general mixing-time bounds in the appendix; see \cref{lem:time_smallgamma} and in particular, \cref{eq:teps_bounds} and the exponent analysis following. Specifically, for small enough $\varepsilon>0$ there exists positive constants $C_1,C_2>0$ (depending on $\gamma$ and $n$) such that
	\begin{equation*}
		\frac{-C_1}{\lambda_{n}} \varepsilon^{\frac{1-\gamma}{1+\gamma}}+ \frac{|x_0|^{1-\gamma}}{1-\gamma}
		- C_2 \varepsilon^{\frac{2\gamma}{1+\gamma}}
		\leq
		t_\varepsilon
		\leq
		\frac{C_1}{\lambda_{n} } \varepsilon^{\frac{1-\gamma}{1+\gamma}} + \frac{|x_0|^{1-\gamma}}{1-\gamma}
		+ C_2 \varepsilon^{\frac{2\gamma}{1+\gamma}}.
	\end{equation*}
	Consequently, $t_\varepsilon=\mathsf{O}(1)$, as $\varepsilon\to 0$.
\end{remark}

The third main result of this manuscript is the following, which yields no cut-off phenomenon for $\gamma>1$.
\begin{theorem}
	[No cut-off phenomenon for Galerkin projection, $\gamma>1$]\label{thm:main:2}\hfill

	\noindent
	Assume that $\gamma \in (1,\infty)$, $x_0 \in \R$ and $\delta(\varepsilon) = 1$. Then for any fixed $n\in \mathbb{N}$ there is no cut-off in the sense of \cref{eq:cutdef}.
\end{theorem}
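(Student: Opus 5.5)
The approach is to reduce everything, via the scaling of the monomial potential, to the $\varepsilon=1$ eigensystem, and then show that for $\gamma>1$ with $\delta=1$ the distance $d_{n,\varepsilon}(t,x_0)$ has a nondegenerate limit profile on the natural time scale. Such a smooth limit rules out a sharp transition.

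\textbf{Scaling.} Setting $x=\varepsilon^{1/(1+\gamma)}y$ turns $\mathcal{L}_\varepsilon$ into $\varepsilon^{(1-\gamma)/(1+\gamma)}\mathcal{L}_1$. This gives
\[
\lambda_{k,\varepsilon}=\varepsilon^{\frac{1-\gamma}{1+\gamma}}\lambda_k
\qquad\text{and}\qquad
\psi_{k,\varepsilon}(x)=\psi_k\bigl(\varepsilon^{-1/(1+\gamma)}x\bigr),
\]
with orthonormality preserved. For $\gamma>1$ the exponent $\frac{1-\gamma}{1+\gamma}$ is negative, so $\lambda_{k,\varepsilon}\to\infty$.

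\textbf{Coefficients.} With $\delta=1$, write
\[
c_{k,\varepsilon}=\frac12\int_{x_0-1}^{x_0+1}\psi_k\bigl(\varepsilon^{-1/(1+\gamma)}x\bigr)\dx .
\]
The integration window in the $y$-variable has length $2\varepsilon^{-1/(1+\gamma)}\to\infty$. Each $|c_{k,\varepsilon}|$ is bounded above by $\frac12\varepsilon^{1/(1+\gamma)}\|\psi_k\|_{L^1(\R)}$, provided $\psi_k\in L^1(\R)$. This is the one analytic input, and I would get it from the WKB/pointwise growth estimates of the appendix (the analogue of \cref{lem:pointwise_intro} for $\gamma>1$). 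Those give $|\psi_k(y)|\lesssim (1+|y|)^{C}e^{|y|^{1+\gamma}/(2(1+\gamma))}$, which is not integrable. So this route fails as stated, and the real issue is the growth of the eigenfunctions.

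\textbf{Main obstacle and what I would try.} A better route is Cauchy–Schwarz against the weight. Since $\int\psi_k^2\varrho_{1,\infty}=1$, one has $|\psi_k(y)|\lesssim e^{V(y)/2}$ times a polynomial factor, and this is exactly the hard step. It gives $|c_{k,\varepsilon}|\le \exp\bigl(C\varepsilon^{-1}(|x_0|+1)^{1+\gamma}\bigr)$. Similarly, a lower bound $|c_{k,\varepsilon}|\ge \exp(c\,\varepsilon^{-1})$ holds for at least one $k\le n$, taken from the lower WKB asymptotics on $[x_0+1/2,x_0+1]$ away from the nodes of $\psi_k$. Set $A_\varepsilon:=\log\max_{k\le n}|c_{k,\varepsilon}|\asymp\varepsilon^{-1}$. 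The only candidate cut-off time is then $t_\varepsilon\asymp A_\varepsilon/\lambda_{1,\varepsilon}\asymp \varepsilon^{-1-\frac{1-\gamma}{1+\gamma}}=\varepsilon^{-2/(1+\gamma)}$.

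\textbf{Deriving the contradiction.} Suppose cut-off held at some $t_\varepsilon$. Then $d_{n,\varepsilon}(ct_\varepsilon)\to\infty$ for $c<1$ and $\to0$ for $c>1$, so $\log d_{n,\varepsilon}(ct_\varepsilon)$ must change sign in a window $o(t_\varepsilon)$. For each $k$,
\[
\log\bigl(e^{-\lambda_{k,\varepsilon}t}|c_{k,\varepsilon}|\bigr)=\log|c_{k,\varepsilon}|-\lambda_{k,\varepsilon}t .
\]
This is affine in $t$ with slopes of order $\lambda_{k,\varepsilon}$. If the ratio $\log|c_{k,\varepsilon}|/(\lambda_{k,\varepsilon}t_\varepsilon)$ converges, along a subsequence, to some $a_k\neq 1$ for the dominant $k$, the limit is a contradiction in the usual way. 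Then cut-off would hold, in the paper's terminology, only if the dominant ratio equals $1$.

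To exclude this I would show the exponential rates satisfy $\log|c_{k,\varepsilon}|=\varepsilon^{-1}\bigl(V(|x_0|+1)+o(1)\bigr)$, the same for all $k$, since the dominant contribution comes from the endpoint. Then $\max_k e^{-\lambda_{k,\varepsilon}t}|c_{k,\varepsilon}|$ is a maximum of $n$ affine functions crossing zero at distinct times $t^{(k)}=V(|x_0|+1)/(\varepsilon\lambda_{k,\varepsilon})$. Their pairwise ratios $\lambda_j/\lambda_k$ are fixed and not equal to $1$. So $d_{n,\varepsilon}$ stays at order one or above between $t^{(n)}$ and $t^{(1)}$. These times differ by a constant factor $\lambda_n/\lambda_1$, which is incompatible with \cref{eq:cutdef} when $n\ge2$. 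The case $n=1$, or $x_0=0$ with a single active mode, must be checked separately with the same endpoint asymptotics. The main obstacle is the precise endpoint Laplace-type asymptotics of $c_{k,\varepsilon}$, which requires the WKB approximation of $\psi_k$ uniformly at large arguments.
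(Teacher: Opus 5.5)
There is a genuine gap, and it sits in the one analytic input you single out. For $\gamma>1$ the eigenfunctions of $-\mathcal{L}_1$ are \emph{bounded}, not of size $e^{V/2}$. The normalization $\int\psi_k^2\varrho_{1,\infty}=1$ only excludes the second solution of $-u''+V'u'=\lambda u$, which grows like $e^{V}$ up to a power. It does not make $e^{V/2}$ the actual size of $\psi_k$. For the $L^2$ solution, $v=e^{-V/2}u\approx e^{S_0+S_1+\cdots}$, and $S_0=-V/2$ cancels the weight exactly. So $\psi_k\approx\exp\bigl(\sum_{i\ge1}S_i\bigr)$ with $S_1(y)=\lambda_k|y|^{1-\gamma}/(1-\gamma)+\mathsf{O}(1)$, and every $S_i$ is bounded when $\gamma>1$ (\cref{thm:main:3prime}). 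Heuristically, $u'\approx\lambda u/V'$ and $\int^\infty|y|^{-\gamma}\dy<\infty$; for $\gamma=1$ the same heuristic gives the $y^{\lambda}$ growth of the Hermite polynomials.

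Hence your lower bound $|c_{k,\varepsilon}|\ge e^{c/\varepsilon}$ and the common-rate claim $\log|c_{k,\varepsilon}|=\varepsilon^{-1}(V(|x_0|+1)+o(1))$ are both false, and no endpoint Laplace asymptotics are needed. Your scaling exponent also has the wrong sign: $\mathcal{L}_\varepsilon u_\varepsilon=\varepsilon^{\frac{\gamma-1}{\gamma+1}}(\mathcal{L}_1u)(x\varepsilon^{-1/(1+\gamma)})$, so $\lambda_{k,\varepsilon}\to0$ for $\gamma>1$.

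Once you have $M_n:=\max_{k\le n}\|\psi_k\|_{L^\infty(\R)}<\infty$, the proof is immediate, and you do not need $L^1$. By \cref{lem:g:scaling}, $\|\psi_{k,\varepsilon}\|_{L^\infty}=\|\psi_k\|_{L^\infty}$, and $\varrho_{\varepsilon,0}$ has unit mass. So $|c_{k,\varepsilon}|\le M_n$ and $d_{n,\varepsilon}(t,x_0)\le\sqrt{n}\,M_n$ uniformly in $t,\varepsilon,x_0$. This makes $d_{n,\varepsilon}(ct_\varepsilon,x_0)\to\infty$ for $c<1$ impossible for any choice of $t_\varepsilon$. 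That is exactly the paper's argument.

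Your contradiction step also fails on its own premises. Suppose $\log|c_{k,\varepsilon}|\to\infty$ at a common rate for all $k\le n$. Then \cref{eq:cutdef} would actually \emph{hold} with $t_\varepsilon:=\log|c_{1,\varepsilon}|/\lambda_{1,\varepsilon}$. For $c<1$, the first mode alone gives $|c_{1,\varepsilon}|^{1-c}\to\infty$. For $c>1$, each mode satisfies $\log|c_{k,\varepsilon}|-c\frac{\lambda_k}{\lambda_1}\log|c_{1,\varepsilon}|\to-\infty$.

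Crossing times $t^{(k)}$ that differ by the factor $\lambda_n/\lambda_1$ are therefore compatible with cut-off at the largest of them. For $n=1$ there is nothing to separate at all. This is exactly the mechanism behind \cref{thm:main,thm:main:sub}. So the absence of cut-off for $\gamma>1$ must come from the boundedness of the coefficients, not from separation of modes.
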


The proof is presented in \cref{sec:proofdos}.

\begin{remark}[Ornstein--Uhlenbeck process]
	\hfill

	\noindent
	We note that $\gamma = 1$ recovers the celebrated Ornstein--Uhlenbeck process, and we can establish a result similar to \cref{thm:main} for $x_0 \neq 0$. In fact, since in this case all the eigenfunctions are explicit (Hermite polynomials) one can  see that there is profile cut-off. In the case $x_0 = 0$ there is no cut-off which can be calculated explicitly from the Hermite polynomials.
\end{remark}

\subsection*{Finding general techniques to ensure the appearance of the cut-off phenomenon}\hfill

\noindent
The cut-off phenomenon has been extensively studied for finite or countable-state Markov chains, both in total variation, in $L^2$-type distances and in separation discrepancy; see, for instance,~\cite{CSC,ChenGYSaloff2024}, the reversible-chain results in~\cite{BHP,ChenHsuSheu}, the separation-cut-off example~\cite{AKL}, and the recent diffusion perspective in~\cite{Salez}. In those works, general criteria are typically formulated in terms of spectral information, geometric input, or suitable mixing inequalities.
The cut-off phenomenon for small-noise perturbations of differential equations with coercive vector field is introduced in the seminal works~\cite{BJ,BJ1}; related continuous-state small-noise behavior, including gradual convergence for degenerate Langevin dynamics, is studied in~\cite{BCJ}. Recently, cutoff phenomenon for spatially localized initial conditions is studied in~\cite{BorMan}.
Our results fit within this general program, but they address a rather different regime: a continuous-state reversible diffusion with a small-noise parameter and a localized initial condition, studied through a truncated $\chi^2$-distance associated with Galerkin projections of the Fokker--Planck eigensystem.

The first novelty of the present work is methodological. We develop a PDE-based spectral approach which combines:
(i) a scaling reduction from $\mathcal L_\varepsilon$ to $\mathcal L_1$;
(ii) a WKB analysis for the associated Sturm--Liouville problem; and
(iii) sharp asymptotics for the Fourier coefficients of localized initial data.
This makes it possible to derive quantitative information not only on the eigenvalues, but also on the pointwise growth of the eigenfunctions at the $\varepsilon$-dependent spatial scale selected by the initial datum. To the best of our knowledge, this level of quantitative control has not been previously used to prove cut-off results for this class of Fokker--Planck dynamics. Moreover, such explicit analysis yields concrete information on the mixing-time scale and, in some regimes, on the limiting profile, going beyond the product condition and the cut-off window alone.

The second novelty is conceptual. In the classical reversible setting, the product condition, namely that the spectral gap multiplied by the mixing time diverges, is a necessary benchmark in many situations, although it is not sufficient in general; see, e.g.,~\cite{BHP,CSC,Levin}. Our results are consistent with that principle in the regimes where cut-off occurs. However, the analysis also shows that, in the present small-noise diffusion problem, the spectral gap alone does not determine the phenomenon if we are interested in the cut-off time. What is decisive is the interplay between eigenvalues, eigenfunction growth, and the symmetry of the initial condition. In particular, the distinction between the regimes $\gamma\in(0,1/3]$ and $\gamma\in(1/3,1)$, as well as cut-off for $x_0=0$, is detected through refined asymptotics of the Fourier coefficients and not by the spectral gap alone.

In summary, our results should therefore be viewed less as a contribution to the general theory of Chen--Saloff-Coste~\cite{ChenSCL,CSC,ChenGYSaloff2024} and related works~\cite{ChenGY,ChenHsuSheu}, and more as a finer asymptotic analysis of a concrete continuous-state model, complementary to explicit continuous-state profile-cut-off examples such as~\cite{BARESQ}. Rather than providing an abstract criterion for an arbitrary family of chains or diffusions, we identify a concrete continuous-state model for which the cut-off time, the window, and, in the regime $\gamma\in(0,1/3]$, the limiting profile can be computed from detailed semiclassical information on the eigensystem. In this sense, the paper shows that refined PDE and spectral asymptotic methods can provide a viable route to cut-off results beyond the finite-state framework.

Finally, although we restrict attention here to one-dimensional monomial convex potentials, this class already exhibits several non-trivial features: singular small-noise scaling, unbounded $\chi^2$-distance, sensitivity to the parity of the eigenfunctions, and a genuine change of behavior across parameter regimes (presence of resonances and a phase transition). For this reason, the present work may be regarded as a first step toward a broader cut-off theory for reversible diffusions based on quantitative spectral asymptotics.

The rest of the manuscript is organized as follows: In \cref{Sec:outline}, we present the main tools to prove the main theorems: \cref{thm:main} and \cref{thm:main:2}. In \cref{sec:auxiliary}, we compute the asymptotic growth of the Fourier coefficients and asymptotics of the mixing times for $\frac{1}{3}  < \gamma < 1$. In \cref{sec:growtheigen}, we calculate the asymptotic growth of the eigenfunctions. Finally, we provide an appendix with auxiliary results and proofs that have been used throughout the manuscript. The appendix is divided into three parts. More specifically, in \cref{sec:srdist} we give the spectral representation of the $\chi^2$--distance to equilibrium, in \cref{sec:menor} we provide the proof for the case $0<\gamma\leq \frac{1}{3}$, and lastly in \cref{ap:tools} we show the scaling argument used in \cref{lem:g:scaling} below.

\section{{Outline of the proofs}}\label{Sec:outline}
In this section, we present the main ideas for the proofs. It is divided in 4 subsections as follows.
In \cref{sec:sarg} we present a scaling argument, which allows us to reduce the problem to the study of the spectrum of $\mathcal{L}_1$, that is, $\varepsilon=1$. In \cref{sec:geig} we apply the so-called WKB expansion to study the growth of the eigenfunctions of $\mathcal{L}_1$. In \cref{sec:auxr} we present the growth of the Fourier coefficients. Finally, in \cref{subsection:proofs} we present the proofs of the main results: \cref{thm:main,thm:main:2}.

\subsection{{Scaling argument}}\label{sec:sarg}
\hfill

\noindent
In this subsection, we
point out a scaling property of the generator $\mathcal{L}_\varepsilon$ defined in \cref{eq:FP} inherited straightforwardly from the scaling of the potential $V$ defined in \cref{def:V}.
\begin{lemma}[Scaling of the spectrum]\label{lem:g:scaling}
	\hfill

	\noindent
	Let $u:\R \to \R$ be a solution to $\mathcal{L}_1 u = f$ for a suitable smooth function $f$. Then the function defined by $u_\varepsilon(x) = u(x \varepsilon^{-\frac{1}{1+\gamma}})$ satisfies
	\begin{equation*}
		\mathcal{L}_\varepsilon u_\varepsilon = \varepsilon^{\frac{\gamma-1}{\gamma+1}} f\left ( x \varepsilon^{-\frac{1}{1+\gamma}} \right ).
	\end{equation*}
	In particular,  any eigenfunction $u$ of $-\mathcal{L}_1 u = \lambda u$ with an eigenvalue $\lambda$ can be transformed into the eigenfunction $u_\varepsilon$ of $-\mathcal{L}_\varepsilon$ with eigenvalue $\lambda \varepsilon^{\frac{\gamma-1}{\gamma+1}}$.

	Furthermore, the following relation holds true $\|u\|_{L^2(C_1 e^{-V})} = \|u_\varepsilon\|_{L^2(C_\varepsilon e^{-V/\varepsilon})}$.
\end{lemma}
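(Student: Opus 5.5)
The plan is a direct chain-rule computation followed by a change of variables in the weighted integral. Write $a := \varepsilon^{-\frac{1}{1+\gamma}}$, so that $u_\varepsilon(x) = u(ax)$ and
\begin{equation*}
	u_\varepsilon'(x) = a\,u'(ax), \qquad u_\varepsilon''(x) = a^2 u''(ax).
\end{equation*}
With $K_\gamma = \frac{1}{\gamma+1}$ we have $V'(x) = |x|^{\gamma}\operatorname{sgn}(x)$, which is homogeneous of degree $\gamma$: $V'(x) = a^{-\gamma} V'(ax)$. Substituting into \cref{eq:FP} gives
\begin{equation*}
	\mathcal{L}_\varepsilon u_\varepsilon(x) = \varepsilon a^2 u''(ax) - a^{1-\gamma} V'(ax)\, u'(ax).
\end{equation*}
The key check is that both prefactors coincide. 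Indeed, $\varepsilon a^2 = \varepsilon^{1-\frac{2}{1+\gamma}} = \varepsilon^{\frac{\gamma-1}{\gamma+1}}$ and $a^{1-\gamma} = \varepsilon^{-\frac{1-\gamma}{1+\gamma}} = \varepsilon^{\frac{\gamma-1}{\gamma+1}}$. Hence
\begin{equation*}
	\mathcal{L}_\varepsilon u_\varepsilon(x) = \varepsilon^{\frac{\gamma-1}{\gamma+1}} (\mathcal{L}_1 u)(ax) = \varepsilon^{\frac{\gamma-1}{\gamma+1}} f(ax).
\end{equation*}
The eigenvalue statement follows by taking $f = -\lambda u$: then $-\mathcal{L}_\varepsilon u_\varepsilon = \lambda\varepsilon^{\frac{\gamma-1}{\gamma+1}} u_\varepsilon$.

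The only delicate point is regularity at $x=0$ when $\gamma<1$, since $V''$ is singular there. I would handle it by interpreting the identity pointwise for $x\neq 0$ (where $u$ is $\mathcal{C}^2$), or in the divergence form $\varepsilon e^{V/\varepsilon}(e^{-V/\varepsilon}u')'$. Scaling is a diffeomorphism fixing the origin, so the class of admissible solutions is preserved. In the divergence form the same computation goes through using $V(x)/\varepsilon = V(ax)$, which is the identity $\varepsilon^{-1} = a^{\gamma+1}$.

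For the norm identity, the main step is that the invariant weights transform consistently. Since $V(x)/\varepsilon = V(ax)$, substituting $y = ax$ gives
\begin{equation*}
	C_\varepsilon^{-1} = \int e^{-V(x)/\varepsilon}\dx = a^{-1}\int e^{-V(y)}\dy = a^{-1}C_1^{-1},
\end{equation*}
so $C_\varepsilon = a\,C_1$. The same substitution yields
\begin{equation*}
	\int |u(ax)|^2 C_\varepsilon e^{-V(x)/\varepsilon}\dx = \int |u(y)|^2 C_\varepsilon a^{-1} e^{-V(y)}\dy = \int |u(y)|^2 C_1 e^{-V(y)}\dy,
\end{equation*}
which is the claimed equality.

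Consequently, orthonormality and completeness of the scaled eigenbasis transfer as well, because the map $u\mapsto u_\varepsilon$ is a unitary bijection between the two weighted $L^2$ spaces. Therefore $\lambda_{k,\varepsilon} = \lambda_k \varepsilon^{\frac{\gamma-1}{\gamma+1}}$ and $\psi_{k,\varepsilon} = (\psi_k)_\varepsilon$. No genuine obstacle is expected beyond this bookkeeping of exponents.
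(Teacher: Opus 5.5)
Your proof is correct and is essentially the paper's argument: you use the chain rule together with the homogeneity $V'(x)=a^{-\gamma}V'(ax)$ to make both prefactors equal to $\varepsilon^{\frac{\gamma-1}{\gamma+1}}$, and then the substitution $y=ax$ with $V(x)/\varepsilon=V(ax)$, giving $C_\varepsilon=aC_1$ (the paper's $C_\varepsilon\rho=C_1$ with $\rho=1/a$). Your concern about $x=0$ is unnecessary, since $\mathcal{L}_\varepsilon$ involves only $V'$, which is continuous, so $u\in\mathcal{C}^2(\R)$ and the identity holds pointwise everywhere without appealing to the divergence form.
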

The proof is given in \cref{ap:tools}.

The above scaling has the special case $\gamma=1$ (corresponding to the Ornstein--Uhlenbeck process) and is the critical case when the eigenvalues of $\mathcal{L}_\varepsilon$ no longer depends on $\varepsilon$.

Looking at \cref{eq:initialcond,eq:dnvarepsilon_intro} we see that in order to study $d_{n,\varepsilon}$ we need to know the blow up rate of the eigenfunctions $u_\varepsilon$ around $x_0$ as $\varepsilon \to 0$. By \cref{lem:g:scaling} this is equivalent to studying the asymptotic growth of $u$ as $x \to \infty$, which is the subject of the next subsection.

\subsection{{The growth of the eigenfunctions}}\label{sec:geig}
\hfill

\noindent
In this subsection, we establish the asymptotic growth of the eigenfunctions for the operator $-\mathcal{L}_1$, that is, for $\varepsilon=1$. For the special case $\gamma=\tfrac12$, related asymptotics are stated in~\cite{MHR}.
Here we give a rigorous derivation for general $\gamma>0$ in the form needed for the cut-off analysis.

We start by recalling the classical transformation of the Sturm--Liouville operator to a Schr\"odinger equation. More precisely,  the Sturm--Liouville equation $-\mathcal{L}_1u=\lambda u$, i.e.,
\begin{equation}\label{eq:SL}
	-u''(x) + V'(x) u'(x) = \lambda u(x)
\end{equation}
satisfied in a classical sense at $x$, for $V$ twice continuously differentiable at $x$, can be transformed into the Schrödinger equation
\begin{equation} \label{eq:Wittenprime}
	v''(x) = \left ( \frac{|V'(x)|^2}{4} - \frac{1}{2} V''(x) - \lambda \right ) v(x),
\end{equation}
where $v(x) = e^{-\frac{1}{2} V(x)} u(x)$. Furthermore, if $u \in L^2(e^{-V(x)}\dx)$ then $v \in L^2(\dx)$.

The following lemma allows us to write the solution of $v$ in \cref{eq:Wittenprime} using the so-called Wentzel--Kramers--Brillouin expansion (WKB  expansion).
\begin{lemma}[WKB expansion]\label{lem:WKBprime}
	\hfill

	\noindent
	Let $\lambda>0$ be fixed and $n \in \N$. Assuming \cref{eq:spectralcondition} there exists a turning point $x^\ast$ of
	\begin{equation*}
		T(x):=\frac{|V'(x)|^2}{4} - \frac{V''(x)}{2} - \lambda,
	\end{equation*}
	that is,  the point $x^*$ for which $T(x)\geq 0$ for all $x \geq x^*$.
	Denote $f = V'$ and let $f \in \mathcal{C}^1([x^*,\infty))$.
	Let $v$ be a solution to \cref{eq:Wittenprime} in $[x^*,\infty)$. Write $v(x) = e^{\sum_{i=0}^{n} S_i(x)+R_n(x)}$, where
	\begin{equation}
		\begin{split}\label{ec:S0S1Sk}
			S_0(x)=   & -\int_{x^*}^{x} \frac{f(s)}{2}
			\ds, \quad
			S_1(x)=  \int_{x^*}^x \frac{\lambda}{f(s)} \ds,                                                                                           \\
			S_k'(x) = & \frac{1}{f(x)} S_{k-1}''(x) + \frac{1}{f(x)} \sum_{j=1}^{k-1} S_j'(x) S_{k-j}'(x), \quad S_k(x^*) = 0, \quad 2 \leq k \leq n,
		\end{split}
	\end{equation}
	for all $x\geq x^*$.
	Then the remainder term $R_{n}$ satisfies for $\widehat u =e^{R_{n}}$
	\begin{equation} \label{eq:SLL}
		-\widehat u''(x) - 2\widehat u'(x) \sum_{j=0}^{n} S_j'(x) = \widehat u(x)  \left ( S_n''(x) +\sum_{n_1+n_2 \geq n+1}^{n_1 \leq n; n_2 \leq n} S_{n_1}'(x)S_{n_2}'(x) \right ),
	\end{equation}
	with initial condition $\widehat u(x^*) = v(x^*)$.
\end{lemma}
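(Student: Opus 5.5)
The lemma is essentially an algebraic identity, so the plan is to substitute the ansatz and compare terms. Since $T(x)\to\infty$ by \cref{eq:spectralcondition}, there is a largest zero of $T$ (or else $T>0$ everywhere), which gives the turning point $x^\ast$ with $T\ge 0$ on $[x^\ast,\infty)$. For $V(x)=|x|^{\gamma+1}/(\gamma+1)$ we have $f=V'>0$ on $(0,\infty)$. After enlarging $x^\ast$ if necessary, $x^\ast>0$, so every $1/f$ in \cref{ec:S0S1Sk} is well defined. Each $S_k$ is then defined recursively by integrating from $x^\ast$, which requires $S_{k-1}''$ to exist. We have $S_0'=-f/2$ and $S_1'=\lambda/f$. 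By induction $S_k'$ is a rational expression in $f,f',\dots,f^{(k-1)}$; for the monomial $f$ this is smooth on $(0,\infty)$, and that suffices (in general one needs $f\in\mathcal C^{n}$, which I would note as an implicit assumption).

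Next set $S:=\sum_{i=0}^n S_i$. Because $v$ is a classical solution, we may define $R_n:=\log v - S$ wherever $v\neq 0$, i.e.\ $\widehat u := v e^{-S}$. Writing $\widehat u$ in this product form avoids any positivity issue with logarithms. We have $S(x^\ast)=0$, hence $\widehat u(x^\ast)=v(x^\ast)$. Then compute
\[
v'' = e^{S}\bigl(\widehat u'' + 2S'\widehat u' + (S''+(S')^2)\widehat u\bigr),
\]
and insert this into \cref{eq:Wittenprime}, where $T=f^2/4-f'/2-\lambda$. This gives
\[
-\widehat u''-2S'\widehat u' = \widehat u\,\bigl(S''+(S')^2-T\bigr).
\]

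It remains to show that $S''+(S')^2-T$ equals the bracket on the right of \cref{eq:SLL}. Expand $(S')^2=\sum_{n_1,n_2\le n}S_{n_1}'S_{n_2}'$ and group the terms by total order $n_1+n_2=k$.
\begin{itemize}
\item Order $0$: $(S_0')^2=f^2/4$ cancels $f^2/4$ in $T$.
\item Order $1$: $2S_0'S_1'=-\lambda$ cancels $-\lambda$, and $S_0''=-f'/2$ cancels $-f'/2$.
\item Orders $2\le k\le n$: the recursion is equivalent to
\[
S_{k-1}''+2S_0'S_k'+\sum_{j=1}^{k-1}S_j'S_{k-j}'=0,
\]
since $2S_0'=-f$. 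So the order-$k$ part of $(S')^2$ cancels against $S_{k-1}''$ from $S''$.
\end{itemize}
After these cancellations, what survives of $S''$ is $S_n''$, and what survives of $(S')^2$ are the products with $n_1+n_2\ge n+1$ and $n_1,n_2\le n$. This is exactly \cref{eq:SLL}.

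The only delicate point is the bookkeeping in this telescoping. One has to check that each $S_{k-1}''$ with $1\le k-1\le n-1$ is used exactly once, at order $k$, and that $S_0''$ is absorbed by the $-f'/2$ term. The remaining steps, namely the existence of $x^\ast$ and the regularity of the $S_k$, are routine for the monomial potential.
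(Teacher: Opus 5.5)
Your proposal is correct and follows the paper's argument. The paper also substitutes the ansatz and uses the hierarchy, which it derives by matching powers of a formal parameter $\eta$, to obtain $S''+(S')^2 = T + S_n'' + \sum_{n_1+n_2\ge n+1} S_{n_1}'S_{n_2}'$, and then turns the resulting equation for $R_n$ into one for $\widehat u$. Your product form $\widehat u = v e^{-S}$ is a slightly cleaner version of that last step because it needs no positivity of $v$, and you are right that making sense of $S_n''$ actually requires $f\in\mathcal C^{n}$ and $f\neq 0$ on $[x^\ast,\infty)$, not merely $f\in\mathcal C^1$.
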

The proof is given in \cref{sec:WKBproof}.

Along this manuscript, we use the following  notation.
We say that $f\lesssim g$ on a domain $D$ if there exists a positive (implied) constant  $C$ such that $f(x)\leq Cg(x)$ for all $x\in D$.
We also say that $f\approx g$ on a domain $D$ if there exists a (implied) constant $C\geq 1$ such that
\begin{equation*}
	\frac{1}{C}g(x)\leq f(x)\leq Cg(x)\quad \textrm{ for all }\quad x\in D.
\end{equation*}
The following theorem allows us to find up to a constant, the asymptotic behavior of $u$ in \cref{eq:SL}.
\begin{theorem}[Growth of the eigenfunctions, $\varepsilon=1$ and $\gamma>0$]\label{thm:main:3prime}
	\hfill

	\noindent
	Let $V$ be the potential defined in \cref{def:V} for some $\gamma>0$ and let $\varrho_{1,\infty}$ be the density function given in \cref{ec:invm} for $\varepsilon=1$. Let $u$ be a solution to the eigenvalue problem
	\begin{equation*}
		-u'' + V' u' = \lambda u \quad \text{ on }\quad  \R
	\end{equation*}
	for some eigenvalue $\lambda > 0$ and $u \in L^2_{\varrho_{1,\infty}}(\R)$. Let $x^*$ be the turning point defined in \cref{lem:WKBprime}. Then if $n > \frac{1+\gamma}{2\gamma}$ and $u(x^*) \geq 0$ we then have  on $[x^*,\infty)$
	\begin{equation*}
		u(x) \approx e^{\sum_{i=1}^n S_i(x)},
	\end{equation*}
	where $S_i(x)$ are the solutions to the closed hierarchy given in \cref{lem:WKBprime}.
	Moreover, there exist constants $C_2,C_3,\ldots,C_n \in \R$ such that
	\begin{equation*}
		u(x)\approx
		\begin{cases}
			e^{\lambda \frac{|x|^{1-\gamma}}{1-\gamma} + \sum_{i=2}^{n} C_i |x|^{1-(2i-1)\gamma}},                     & \textrm{ if } \gamma^{-1} \notin 2\mathbb{N}-1,
			\\
			e^{\lambda \frac{|x|^{1-\gamma}}{1-\gamma} + \sum_{i=2}^{m-1} C_i |x|^{1-(2i-1)\gamma} + C_{m} \log(|x|)}, & \textrm{if } \gamma = \frac{1}{2m-1} \textrm{ for some } m\in\mathbb{N},\ 2 \leq m < n.
		\end{cases}
	\end{equation*}
\end{theorem}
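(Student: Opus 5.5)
We work on $[x^*,\infty)$ with $x^*$ the turning point of \cref{lem:WKBprime} (and $x^*>0$, since $T(x)\to\infty$). We set $v=e^{-V/2}u$, a solution of \cref{eq:Wittenprime}, and write $v=e^{S_0+\dots+S_n+R_n}$ as in \cref{lem:WKBprime}. Since $S_0=-\tfrac12(V(x)-V(x^*))$, we get $u=e^{V(x^*)/2}\,e^{S_1+\dots+S_n}\,\widehat u$ with $\widehat u=e^{R_n}$. Hence the first claim reduces to showing $\widehat u\approx 1$ on $[x^*,\infty)$, i.e.\ that $\widehat u$ has a finite positive limit at infinity and stays bounded away from $0$ and $\infty$.

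\textbf{Step 1: size of the $S_k$.} With $f(x)=x^\gamma$, we have $S_1'=\lambda x^{-\gamma}$. By induction on the hierarchy \cref{ec:S0S1Sk}, $S_k'(x)=a_k x^{-(2k-1)\gamma}\bigl(1+\textrm{o}(1)\bigr)$. Indeed, $S_{k-1}''/f\sim x^{-(2k-3)\gamma-1-\gamma}$ and $S_j'S_{k-j}'/f\sim x^{-(2k-1)\gamma}$. The term $x^{-(2k-2)\gamma-1}$ is of lower order whenever $1>\gamma$, which is the relevant case here. In general both kinds of terms are powers of $x$, and they can be tracked exactly because for monomial $f$ every $S_k'$ is a finite sum of powers $x^{-\alpha}$.

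Integrating $S_k'$ gives the powers $|x|^{1-(2k-1)\gamma}$ of the second claim. When $(2k-1)\gamma=1$, i.e.\ $\gamma=1/(2m-1)$, the integral produces $C_m\log x$. All later terms have exponent $1-(2i-1)\gamma<0$, so their integrals converge and they only contribute bounded factors absorbed into $\approx$. Constants of integration are likewise absorbed. This gives the explicit form once $\widehat u\approx1$ is established.

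\textbf{Step 2: control of the remainder (the main obstacle).} Write $g:=2\sum_{j\le n}S_j'=-f+\textrm{o}(f)$ and let $q$ be the right-hand side coefficient in \cref{eq:SLL}. Each term of $q$ is $S_n''$ or a product $S_{n_1}'S_{n_2}'$ with $n_1+n_2\ge n+1$, hence $|q|\lesssim x^{-2n\gamma}+x^{-(2n-1)\gamma-1}$. The hypothesis $n>\frac{1+\gamma}{2\gamma}$ means $2n\gamma>1+\gamma$, so $q/f$ is integrable at infinity. Moreover $|q|\lesssim x^{-1-\gamma-\eta}$ for some $\eta>0$, so $\int^\infty|q|\,x^{-\gamma}\ldots$ and similar weighted integrals are finite.

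Equation \cref{eq:SLL} reads $\widehat u''+g\widehat u'=-q\widehat u$. This is a first-order system for $w=\widehat u'$, solved by variation of constants: with $E(x)=e^{\int g}\approx e^{-V}$,
\[
\widehat u'(x)=\widehat u'(x^*)\frac{E(x^*)}{E(x)}-\frac{1}{E(x)}\int_{x^*}^x E(s)q(s)\widehat u(s)\ds .
\]
The homogeneous piece grows like $e^{V}$. It corresponds to the second, non-$L^2$ solution $u\sim e^{V}\cdot(\dots)$, and it must be excluded using $u\in L^2_{\varrho_{1,\infty}}$.

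To do this, I would reformulate via the decaying solution. Choose the solution given by the Volterra integral equation
\[
\widehat u(x)=1+\int_x^\infty \Bigl(\int_x^t \frac{E(t)}{E(s)}\ds\Bigr) q(t)\widehat u(t)\dt .
\]
The kernel is bounded by $C/f(t)$, because $\int_x^t e^{-\int_s^t g}\,ds\lesssim 1/f(t)$ since $-g\approx f$ is growing. Since $\int^\infty|q|/f<\infty$, a Gronwall/Volterra iteration gives a bounded solution with $\widehat u\to1$, and hence $\widehat u\approx1$ on $[x_1,\infty)$ for large $x_1$.

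This $\widehat u$ yields a solution $u_+\approx e^{S_1+\dots+S_n}$, which is in $L^2(e^{-V})$. A second, independent solution behaves like $u_+\int e^{V}/u_+^2$, which grows like $e^{V}$ up to lower-order factors and is not in $L^2(e^{-V})$. So $u=c\,u_+$ with $c\neq0$, and the sign normalization $u(x^*)\ge0$ forces $c>0$. Note that $u$ has no zeros beyond $x^*$ by non-oscillation, since $T\ge0$ there. On the compact interval $[x^*,x_1]$, both $u$ and $e^{\sum S_i}$ are continuous and positive, so the comparison $\approx$ extends to all of $[x^*,\infty)$. Finally, $|x|$ appears in the statement only because $x\ge x^*>0$.
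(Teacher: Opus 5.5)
Your argument is correct, but it controls the remainder by a different mechanism than the paper.

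\textbf{The paper's route.} The paper never constructs a solution. It writes the given $\widehat u=e^{R_n}$ in Pr\"ufer coordinates $(\widehat u,\widehat u')=r(\cos\theta,\sin\theta)$ and observes that $\theta$ obeys a decoupled first-order ODE. It builds barriers $\theta_l,\theta_u\to0$ showing that trajectories can only accumulate at $\{-\pi/2,0,\pi/2\}$. The $L^2$ hypothesis excludes $\pm\pi/2$, since those limits would make $\widehat u$ grow like $\int e^{\frac78\int f}$. Finally it bounds $r$ above and below via $|AY\cdot Y|\lesssim |q|/f\in L^1$, which gives $\widehat u=r\cos\theta\approx1$.

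\textbf{Your route.} You use classical Liouville--Green/Levinson asymptotic integration. You construct the recessive solution as the fixed point of a Volterra equation with kernel $\lesssim 1/f(t)$. You then identify $u$ with a positive multiple of it: the Wronskian $W=Ce^{V}$ and reduction of order show that the complementary solution is not in $L^2(e^{-V})$.

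\textbf{What each buys.} Your route gives more. It yields $\widehat u\to1$ with the explicit rate $|\widehat u(x)-1|\lesssim\int_x^\infty|q|/f$, hence a genuine asymptotic $u\sim c\,e^{\sum_{i\ge1}S_i}$ rather than two-sided comparability. It also sidesteps the more delicate steps of the paper's route: passing from accumulation points to actual convergence of $\theta$, and the quantitative bound $|\theta|\lesssim|q|/f$. The paper's route works directly with the given solution and needs no existence-plus-identification step. You also make explicit the positivity of $u$ on $[x^*,\infty)$, which both arguments need to extend $\approx$ to the compact piece $[x^*,x_1]$.

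\textbf{Slips to fix.} None of these affects the argument.
\begin{itemize}
\item From $(E\widehat u')'=-Eq\widehat u$, the recessive solution satisfies $\widehat u(x)=1-\int_x^\infty\bigl(\int_x^t E(t)/E(s)\,ds\bigr)q(t)\widehat u(t)\,dt$, with a minus sign, not a plus.
\item The kernel is $\int_x^t e^{\int_s^t g}\,ds$, not $\int_x^t e^{-\int_s^t g}\,ds$.
\item $E\approx e^{-V}$ holds only up to the factor $e^{2\sum_{j\ge1}S_j}$, which is unbounded when $\gamma<1$.
\item ``No zeros beyond $x^*$'' needs $v\in L^2$ in addition to $T\ge0$. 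For example, $v^2$ is convex on $[x^*,\infty)$, so a zero would force at least linear growth.
\item For $\gamma>1$, your Step~1 asymptotics $S_k'\sim a_kx^{-(2k-1)\gamma}$ and your bound on $q$ are wrong: the derivative terms dominate and $|S_k'|\lesssim x^{-(k\gamma+k-1)}$. This is harmless, since then all $S_k$ with $k\ge1$ are bounded and $q/f\in L^1$ still holds.
\end{itemize}
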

The proof is given in \cref{sec:growth}.

\begin{remark}[Error term and the appearing of resonances]\hfill

	\noindent
	When $\gamma > \nicefrac{1}{3}$ the expression above becomes particularly simple since we can take $n = 2$. In this case
	$S_1(x) = \lambda \frac{|x|^{1-\gamma}}{1-\gamma} + C_1$, $S_2(x) = \frac{\lambda}{2}\,|x|^{-2\gamma} + \frac{\lambda^2}{1-3\gamma} |x|^{\,1-3\gamma}+ C$. The point is that $S_2$ will not contribute more than a multiplicative constant in $u$ as it is bounded. As we will prove in \cref{sec:growtheigen} the remainder term $R_2$ is also bounded.

	In the above theorem we also have a phase transition at every resonant point of $\gamma$, the first resonant point is at $\gamma = \nicefrac{1}{3}$ (we have one at $\gamma = \nicefrac{1}{k}$ for all odd $k$). In this case we need to take $n=3$ so that the remainder is bounded, while the logarithmic term already appears in $S_2$. More precisely,
	$S_2(x) = \frac{\lambda}{2} |x|^{-2/3} + \lambda^2 \log |x| + C$, where the first term only contributes a multiplicative constant as it is bounded on $[x^\ast,\infty)$. $S_3$ can be calculated explicitly and is bounded, and the remainder $R_3$ is also bounded.
\end{remark}

As a corollary we obtain the following asymptotics when $\frac{1}{3}<\gamma<1$.
\begin{corollary}[Growth of the eigenfunctions, $\varepsilon>0$ and $\frac{1}{3}<\gamma<1$]\label{cor:asymptoticsprime}
	\hfill

	\noindent
	Let $V$ be the potential defined in \cref{def:V} for some $\gamma>0$ and let $\varrho_{\varepsilon,\infty}$ be the density function given in \cref{ec:invm}.
	Assume that $\frac{1}3 < \gamma < 1$ and let $v \in L^2_{\varrho_{\varepsilon,\infty}}$ be a solution to the eigenvalue problem
	\begin{equation*}
		-\varepsilon v'' + V' v' = \lambda_{\varepsilon} v \quad \text{on } \R,
	\end{equation*}
	for $\lambda_\varepsilon = \lambda \varepsilon^{\frac{\gamma-1}{\gamma+1}} > 0$ with $\lambda$ being an eigenvalue of $\mathcal{L}_1$.
	Then it follows that
	\begin{equation*}
		v(x)
		\approx e^{\lambda \frac{|x \varepsilon^{-\frac{1}{1+\gamma}}|^{1-\gamma}}{1-\gamma}}\quad \textrm{ in }\quad
		[x^* \varepsilon^{\frac{1}{1+\gamma}},\infty),
	\end{equation*}
	where $x^*$ is the turning point defined in \cref{thm:main:3prime}.
\end{corollary}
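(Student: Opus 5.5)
The corollary should follow directly from \cref{thm:main:3prime} combined with the scaling \cref{lem:g:scaling}, so the plan is to reduce the $\varepsilon$-problem to $\varepsilon=1$ and then show that only the leading WKB term survives up to multiplicative constants.

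\emph{Reduction to $\varepsilon=1$.} By \cref{lem:g:scaling}, if $u$ solves $-\mathcal{L}_1 u=\lambda u$, then $u_\varepsilon(x)=u(x\varepsilon^{-\frac{1}{1+\gamma}})$ solves $-\mathcal{L}_\varepsilon u_\varepsilon=\lambda\varepsilon^{\frac{\gamma-1}{\gamma+1}}u_\varepsilon$, with the same weighted $L^2$ norm. Conversely, given $v\in L^2_{\varrho_{\varepsilon,\infty}}$ with eigenvalue $\lambda_\varepsilon$, set $u(y):=v(y\varepsilon^{\frac{1}{1+\gamma}})$. Applying the same lemma with $\varepsilon$ replaced by $\varepsilon^{-1}$ in the change of variables (equivalently, inverting the substitution $y=x\varepsilon^{-\frac{1}{1+\gamma}}$ directly in the ODE) gives $-\mathcal{L}_1u=\lambda u$ and $u\in L^2_{\varrho_{1,\infty}}$. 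Since eigenvalues are simple in one dimension, $v$ is exactly the rescaling of an eigenfunction of $\mathcal{L}_1$. If needed, replace $v$ by $-v$ so that $u(x^*)\ge 0$; the relation $\approx$ is intended up to sign and constants.

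\emph{Growth at $\varepsilon=1$.} Apply \cref{thm:main:3prime} with $n=2$. This is admissible because $\gamma>\tfrac13$ gives $\frac{1+\gamma}{2\gamma}<2$. Since $\gamma^{-1}\in(1,3)$ is not an odd integer, we are in the non-resonant case, and on $[x^*,\infty)$
\[
u(y)\approx e^{\lambda\frac{|y|^{1-\gamma}}{1-\gamma}+C_2|y|^{1-3\gamma}}.
\]
Now $1-3\gamma<0$, so $|y|^{1-3\gamma}\le (x^*)^{1-3\gamma}$ on $[x^*,\infty)$, provided $x^*>0$; this holds because $T(x)\to-\infty$ near $0$ when $\gamma<1$. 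Hence $e^{C_2|y|^{1-3\gamma}}$ lies between two positive constants and can be absorbed into the implied constant, giving $u(y)\approx e^{\lambda\frac{|y|^{1-\gamma}}{1-\gamma}}$. This is exactly the content of the preceding remark that $S_2$ and $R_2$ are bounded.

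\emph{Rescaling back.} Substitute $y=x\varepsilon^{-\frac{1}{1+\gamma}}$. The condition $y\ge x^*$ becomes $x\ge x^*\varepsilon^{\frac{1}{1+\gamma}}$, and the implied constants do not depend on $\varepsilon$ because they come from the fixed $\varepsilon=1$ problem. This yields the claim.

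The main point requiring care is the identification of $v$ with a rescaled $\mathcal{L}_1$-eigenfunction, together with checking the hypotheses of \cref{thm:main:3prime}: the $L^2$ membership, the sign normalization $u(x^*)\ge0$, and the fact that $\lambda$ is genuinely an eigenvalue of $\mathcal{L}_1$. The last is given in the statement. The $L^2$ membership follows from the norm identity in \cref{lem:g:scaling}, and the sign is fixed by choosing the orientation of the eigenfunction.
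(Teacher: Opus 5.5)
Your proposal is correct and follows the paper's own route. Like the paper, you rescale to $u(y)=v(\varepsilon^{\frac{1}{1+\gamma}}y)$ via \cref{lem:g:scaling}, apply \cref{thm:main:3prime} with $n=2$ (admissible since $\gamma>\tfrac13$), note that the $S_2$-contribution and the remainder $R_2$ (the latter by \cref{cor:Rn-bounded}) are bounded on $[x^*,\infty)$, and substitute back. Your extra checks fill in points the paper leaves implicit: the sign normalization $u(x^*)\geq 0$, the positivity of $x^*$, and the $\varepsilon$-independence of the implied constants via simplicity of eigenvalues and the norm identity.
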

The proof is presented in \cref{sec:growth}.

\subsection{Growth of the Fourier coefficients and mixing times asymptotics for \texorpdfstring{$\frac{1}{3} < \gamma < 1$}{1/3 < gamma < 1}}\label{sec:auxr}
\hfill

\noindent
For simplicity of the presentation, with the help of \cref{cor:asymptoticsprime}
we study the growth behavior of the Fourier coefficients
\begin{equation*}
	\int \varrho_{\varepsilon,0}(x) \psi_{k,\varepsilon}(x) \dx,
\end{equation*}
where $\varrho_{\varepsilon,0}$ is the initial density given in \cref{eq:initialcond}
when $\frac{1}{3}<\gamma<1$.
We also provide asymptotics for the mixing times when $\frac{1}{3}<\gamma<1$.
The case $0<\gamma\leq \frac{1}{3}$ is given in \cref{sec:menor}.

\begin{lemma}[Growth of the Fourier coefficients $\frac{1}{3}<\gamma<1$ and $\varepsilon>0$]\label{lem:pointwise_intro}
	\hfill

	\noindent
	Let $\frac{1}{3}  < \gamma < 1$ and let $\delta = \varepsilon^{\frac{1-\gamma}{\gamma+1}}$, and let $\psi_{k,\varepsilon}$ be the $k$-th non-constant eigenfunction of the operator $-\mathcal{L}_\varepsilon$ with eigenvalue $\lambda_{k,\varepsilon} = \lambda_k \varepsilon^{\frac{\gamma-1}{\gamma+1}}$, where $\lambda_k$ is the $k$-th eigenvalue of the operator $-\mathcal{L}_1$.
	For $x_0\neq 0$
	and small enough $\varepsilon = \varepsilon(k,\gamma,x_0)>0$ it holds
	\begin{equation}\label{eq:xnozero}
		\left\lvert \int \varrho_{\varepsilon,0}(x) \psi_{k,\varepsilon}(x) \dx \right\rvert \approx \fint_{B_{\delta}(x_0)} e^{\lambda_{k} \frac{|x \varepsilon^{-\frac{1}{\gamma+1}}|^{1-\gamma}}{1-\gamma}} \dx \approx e^{\lambda_{k} \varepsilon^{\frac{\gamma-1}{\gamma+1}} \frac{|x_0|^{1-\gamma}}{1-\gamma}},
	\end{equation}
	where the implied constant is independent of $\varepsilon$.

	For  $x_0 = 0$ and $\psi_{k}(0) \neq 0$ we have that $\psi_k$ is an even function and hence
	\begin{equation*}
		\left\lvert \int \varrho_{\varepsilon,0}(x) \psi_{k,\varepsilon}(x) \dx \right\lvert  \approx \varepsilon^{\frac{\gamma(1-\gamma)}{1+\gamma}} e^{\frac{\lambda_{k}}{1-\gamma} \varepsilon^{\frac{\gamma(\gamma-1)}{\gamma+1}} },
	\end{equation*}
	while  for $x_0 = 0$ and  $\psi_{k}(0) = 0$ we have that $\psi_k$ is an odd function and hence
	\begin{equation*}
		\int \varrho_{\varepsilon,0}(x) \psi_{k,\varepsilon}(x) \dx = 0.
	\end{equation*}
\end{lemma}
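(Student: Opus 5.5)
We argue directly from the scaling \cref{lem:g:scaling} and the two-sided growth bounds of \cref{cor:asymptoticsprime}. By \cref{lem:g:scaling}, $\psi_{k,\varepsilon}(x)=\psi_k(x\varepsilon^{-\frac{1}{1+\gamma}})$, and this eigenfunction is correctly normalized. Hence
\[
\int \varrho_{\varepsilon,0}\psi_{k,\varepsilon}\dx=\fint_{B_\delta(x_0)}\psi_k\big(x\varepsilon^{-\frac{1}{1+\gamma}}\big)\dx .
\]
The potential $V$ is even, so each $\psi_k$ is either even or odd; the $k$-th eigenfunction has parity $(-1)^k$ by Sturm--Liouville oscillation theory. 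In particular, $\psi_k(0)\neq0$ forces $\psi_k$ to be even, and $\psi_k(0)=0$ forces it to be odd.

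\emph{Case $x_0\neq0$.} Assume $x_0>0$; the case $x_0<0$ follows by parity. Since $\delta=\varepsilon^{\frac{1-\gamma}{1+\gamma}}\to0$, for small $\varepsilon$ the ball $B_\delta(x_0)$ lies in $[x_0/2,2x_0]$. This set is contained in $[x^*\varepsilon^{\frac1{1+\gamma}},\infty)$ once $\varepsilon$ is small. Replacing $\psi_k$ by $-\psi_k$ if needed (this does not change absolute values), \cref{thm:main:3prime} gives $\psi_k>0$ there, and \cref{cor:asymptoticsprime} gives $\psi_{k,\varepsilon}(x)\approx e^{\lambda_k\varepsilon^{\frac{\gamma-1}{\gamma+1}}|x|^{1-\gamma}/(1-\gamma)}$ with constants independent of $\varepsilon$. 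Because the integrand has one sign, the average is comparable to the average of the exponential, which is the first $\approx$ in \cref{eq:xnozero}. For the second, we control the oscillation of the exponent on the ball. By the mean value theorem,
\[
\big||x|^{1-\gamma}-|x_0|^{1-\gamma}\big|\lesssim \delta\,|x_0|^{-\gamma}\quad\text{for }x\in B_\delta(x_0).
\]
Multiplying by $\varepsilon^{\frac{\gamma-1}{\gamma+1}}$, the exponent varies by at most $C\delta\varepsilon^{\frac{\gamma-1}{\gamma+1}}=C$, which is bounded. This is exactly where the choice of $\delta$ enters, and it gives the second $\approx$.

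\emph{Case $x_0=0$, $\psi_k$ odd.} The average of an odd function over a symmetric interval vanishes, so the coefficient is $0$.

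\emph{Case $x_0=0$, $\psi_k$ even.} Change variables $y=x\varepsilon^{-\frac1{1+\gamma}}$ and set $R:=\delta\varepsilon^{-\frac1{1+\gamma}}=\varepsilon^{-\frac{\gamma}{1+\gamma}}\to\infty$. The coefficient becomes
\[
\frac1R\int_0^R\psi_k(y)\dy .
\]
We split this integral at $x^*$. The part over $[0,x^*]$ is $\mathsf O(1)$. On $[x^*,R]$, \cref{thm:main:3prime} with $\gamma>1/3$ gives $\psi_k(y)\approx e^{\lambda_k y^{1-\gamma}/(1-\gamma)}$. A Laplace-type endpoint estimate then yields
\[
\int_{x^*}^R e^{\lambda_k y^{1-\gamma}/(1-\gamma)}\dy\approx \frac{R^\gamma}{\lambda_k}\,e^{\lambda_k R^{1-\gamma}/(1-\gamma)} .
\]
For the upper bound, note that the derivative of the exponent is $\lambda_k y^{-\gamma}\ge\lambda_k R^{-\gamma}$ on $[x^*,R]$; for the lower bound, integrate over $[R-R^\gamma,R]$, where the exponent changes by $\mathsf O(1)$.

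This growing term dominates the $\mathsf O(1)$ part. Dividing by $R$ gives $R^{\gamma-1}e^{\lambda_kR^{1-\gamma}/(1-\gamma)}$, and substituting $R^{\gamma-1}=\varepsilon^{\frac{\gamma(1-\gamma)}{1+\gamma}}$ and $R^{1-\gamma}=\varepsilon^{\frac{\gamma(\gamma-1)}{\gamma+1}}$ gives the claimed formula.

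The main obstacle is the sign issue. The two-sided bound $\approx$ from \cref{thm:main:3prime} requires $\psi_k$ to have a fixed sign on $[x^*,\infty)$, so that averaging cannot produce cancellation. We handle this by noting that all zeros of $\psi_k$ lie inside the classically allowed region $(-x^*,x^*)$: for $x\ge x^*$ the Schrödinger form \cref{eq:Wittenprime} has $T\ge0$, so $v$ is convex where positive and, being in $L^2$, cannot cross zero there. This argument is the reason the two-sided bound is available uniformly in $\varepsilon$.
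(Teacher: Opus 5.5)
Your proof is correct and follows essentially the same route as the paper. For $x_0\neq 0$ you use the scaling lemma together with \cref{cor:asymptoticsprime} on the shrinking ball. For $x_0=0$ you rescale to $[-R,R]$ with $R=\varepsilon^{-\gamma/(1+\gamma)}$, split at the turning point, estimate the tail $\int_{x^*}^R e^{\lambda_k y^{1-\gamma}/(1-\gamma)}\,\mathrm{d}y$, and then argue by parity.

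The differences are minor. You make explicit three things the paper leaves implicit: the mean-value bound showing that the exponent oscillates by only $\mathsf{O}(\delta\,\varepsilon^{\frac{\gamma-1}{\gamma+1}})=\mathsf{O}(1)$ on $B_\delta(x_0)$ (the actual role of the choice of $\delta$), the two-sided Laplace bounds, and the absence of zeros beyond the turning point. You also obtain the even/odd dichotomy from Sturm--Liouville oscillation theory rather than from the paper's Hopf-lemma zero count.
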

The proof is given in \cref{sec:auxiliary}.

Now we can define the unique time $t_\varepsilon$ such that
\begin{equation} \label{eq:mixing_0}
	d_{1,\varepsilon}(t_\varepsilon,x_0) = 1,
\end{equation}
where $d_{1,\varepsilon}$ is defined in \cref{eq:Ga}.

\begin{lemma}[Mixing times asymptotics for $\frac{1}{3}<\gamma<1$]\label{lem:time_intro}
	\hfill

	\noindent
	Let $\frac{1}{3}  < \gamma < 1$, $x_0 \neq 0$ and let $t_\varepsilon$ be the time defined by \cref{eq:mixing_0}. Given $1 \leq n < \infty$, then for small enough $\varepsilon$ it holds
	\begin{equation*}
		d_{n,\varepsilon}(t_\varepsilon,x_0) \approx 1
	\end{equation*}
	with implied constant independent of $\varepsilon$.
	Furthermore, for small enough $\varepsilon$ it holds for a constant $C:=C(\gamma)\geq 1$
	\begin{equation} \label{eq:mixingtime:bound}
		\frac{\log(1/C)}{\lambda_{1}} \varepsilon^{\frac{1-\gamma}{1+\gamma}}+ \frac{|x_0|^{1-\gamma}}{1-\gamma}
		\leq
		t_\varepsilon
		\leq
		\frac{\log(C)}{\lambda_{1} } \varepsilon^{\frac{1-\gamma}{1+\gamma}} + \frac{|x_0|^{1-\gamma}}{1-\gamma}.
	\end{equation}

	If, on the other hand, $x_0 = 0$, define
	\begin{equation*}
		E_n:=\{1\leq j\leq n:\ \psi_j(0)\neq 0\}.
	\end{equation*}
	If $E_n=\emptyset$, then $d_{n,\varepsilon}(t,0)=0$ for all $t\geq 0$. Otherwise, let $m:=\max E_n$ and define $t_\varepsilon$ by the condition
	\begin{equation*}
		e^{-2\lambda_m \varepsilon^{\frac{\gamma-1}{1+\gamma}} t_\varepsilon}
		\left(\int \varrho_{\varepsilon,0}(x)\psi_{m,\varepsilon}(x)\,\dx\right)^2 = 1.
	\end{equation*}
	Then, for $\varepsilon$ sufficiently small, we have
	\begin{equation*}
		d_{n,\varepsilon}(t_\varepsilon,0) \approx 1.
	\end{equation*}
	In addition, for small enough $\varepsilon$ it holds
	\begin{multline} \label{eq:mixingtime:bound:zero}
		\frac{\varepsilon^{\frac{1-\gamma}{1+\gamma}}}{\lambda_m}
		\left (\frac{\gamma(1-\gamma)}{1+\gamma} \log(\varepsilon)-\log(C) \right ) + \frac{1}{1-\gamma}\varepsilon^{\frac{(\gamma-1)^2}{\gamma+1}}
		\leq
		t_\varepsilon
		\\
		\leq
		\frac{\varepsilon^{\frac{1-\gamma}{1+\gamma}}}{\lambda_m}
		\left (\frac{\gamma(1-\gamma)}{1+\gamma} \log(\varepsilon)+\log(C) \right )
		+ \frac{1}{1-\gamma}\varepsilon^{\frac{(\gamma-1)^2}{\gamma+1}}.
	\end{multline}
\end{lemma}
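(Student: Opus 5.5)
The proof combines the coefficient asymptotics of \cref{lem:pointwise_intro} with the spectral formula \cref{eq:dnvarepsilon_intro}. Write $c_{k,\varepsilon}:=\int\varrho_{\varepsilon,0}\psi_{k,\varepsilon}\dx$, $\beta:=\frac{1-\gamma}{1+\gamma}$, and $A:=\frac{|x_0|^{1-\gamma}}{1-\gamma}$. By \cref{lem:g:scaling}, $\lambda_{k,\varepsilon}=\lambda_k\varepsilon^{-\beta}$.

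\emph{Case $x_0\neq 0$.} By \cref{lem:pointwise_intro} there is $C\geq 1$ with
\[
C^{-1}e^{\lambda_k\varepsilon^{-\beta}A}\leq|c_{k,\varepsilon}|\leq Ce^{\lambda_k\varepsilon^{-\beta}A}
\]
for each fixed $k\leq n$ and small $\varepsilon$. The defining relation $t_\varepsilon=\varepsilon^{\beta}\lambda_1^{-1}\log|c_{1,\varepsilon}|$ then gives
\[
t_\varepsilon\in\bigl[A+\varepsilon^\beta\lambda_1^{-1}\log(1/C),\;A+\varepsilon^\beta\lambda_1^{-1}\log C\bigr],
\]
which is \cref{eq:mixingtime:bound}. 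Write $t_\varepsilon=A+s_\varepsilon\varepsilon^\beta$ with $|s_\varepsilon|\leq\lambda_1^{-1}\log C$. For each $k$,
\[
e^{-2\lambda_{k,\varepsilon}t_\varepsilon}c_{k,\varepsilon}^2\approx e^{-2\lambda_k\varepsilon^{-\beta}(t_\varepsilon-A)}=e^{-2\lambda_k s_\varepsilon}.
\]
The right-hand side lies in $[C^{-2\lambda_k/\lambda_1},C^{2\lambda_k/\lambda_1}]$. Summing over $k=1,\dots,n$ gives $d_{n,\varepsilon}(t_\varepsilon)^2\approx 1$, with a constant depending on $n$ and $\gamma$ but not on $\varepsilon$. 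The lower bound is immediate anyway: $d_{n,\varepsilon}\geq d_{1,\varepsilon}=1$.

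\emph{Case $x_0=0$.} By parity (the odd part of \cref{lem:pointwise_intro}), $c_{j,\varepsilon}=0$ for every $j\notin E_n$. So if $E_n=\emptyset$, all terms in \cref{eq:dnvarepsilon_intro} vanish. Otherwise, let $B:=\frac{1}{1-\gamma}\varepsilon^{\gamma(\gamma-1)/(\gamma+1)}$ and $a:=\frac{\gamma(1-\gamma)}{1+\gamma}$. For $j\in E_n$ the lemma gives $|c_{j,\varepsilon}|\approx\varepsilon^{a}e^{\lambda_jB}$. Solving the defining equation $\lambda_m\varepsilon^{-\beta}t_\varepsilon=\log|c_{m,\varepsilon}|$ yields
\[
t_\varepsilon=\varepsilon^\beta\lambda_m^{-1}\bigl(a\log\varepsilon+\mathsf{O}(1)\bigr)+\varepsilon^\beta B.
\]
Since $\beta+\gamma(\gamma-1)/(\gamma+1)=(1-\gamma)^2/(1+\gamma)$, this is \cref{eq:mixingtime:bound:zero}, with $|\mathsf{O}(1)|\leq\log C$.

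For $j\in E_n$ with $j<m$ I would then compute
\[
e^{-2\lambda_{j,\varepsilon}t_\varepsilon}c_{j,\varepsilon}^2\approx\varepsilon^{2a}e^{2\lambda_jB}\exp\Bigl(-\tfrac{2\lambda_j}{\lambda_m}\bigl(a\log\varepsilon+\mathsf{O}(1)\bigr)-2\lambda_jB\Bigr)=\varepsilon^{2a(1-\lambda_j/\lambda_m)}e^{\mathsf{O}(1)}.
\]
This tends to $0$, since $\lambda_j<\lambda_m$. The $m$-th term equals $1$ by definition, so $d_{n,\varepsilon}(t_\varepsilon,0)^2=1+o(1)\approx 1$.

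\emph{Main obstacle.} The step needing the most care is the uniformity of the implied constants in \cref{lem:pointwise_intro}. We use them across finitely many $k\leq n$, and we need the threshold $\varepsilon(k,\gamma,x_0)$ to be taken as the minimum over $k\leq n$. This is harmless since $n$ is fixed. I would also check that the cancellation of the leading exponentials $e^{\lambda_k\varepsilon^{-\beta}A}$ (respectively $e^{\lambda_jB}$) is exact, and not merely up to $\approx$ inside the exponent. This holds because \cref{lem:pointwise_intro} asserts $\approx$ for the coefficients themselves, i.e.\ multiplicative constants, so the leading exponentials cancel exactly and only bounded factors remain.
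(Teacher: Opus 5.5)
Your proposal is correct and follows essentially the same route as the paper. You take logarithms in the defining relation using the coefficient asymptotics of \cref{lem:pointwise_intro} to bound $t_\varepsilon$, and then insert those bounds into the spectral sum \cref{eq:dnvarepsilon_intro} to get $d_{n,\varepsilon}(t_\varepsilon,x_0)\approx 1$. In the centered case you spell out the decay $\varepsilon^{\frac{2\gamma(1-\gamma)}{1+\gamma}(1-\lambda_j/\lambda_m)}$ of the lower even modes. The paper only alludes to this step here (``same argument as above'') and carries it out in the centered part of the proof of \cref{thm:main}.
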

The proof is given in \cref{sec:auxiliary}.

\subsection[Proof of main results]{{Proof of the main results: \cref{thm:main} and \cref{thm:main:2}}}\label{subsection:proofs}
\hfill

\noindent
In this subsection, using the results stated in \cref{sec:sarg,sec:geig,sec:auxr} we give the proofs of \cref{thm:main,thm:main:2}.

\subsubsection{{Proof of \cref{thm:main}}}\label{sec:proofuno}\hfill

\noindent
Assume that $x_0\neq 0$.
Define the vectors
\begin{align*}
	V_{n,\varepsilon} & := \left \{ e^{-2\lambda_k \varepsilon^{\frac{\gamma-1}{1+\gamma}} t_\varepsilon} \left ( \int \varrho_{\varepsilon,0}(x)\psi_{k,\varepsilon}(x) \dx \right )^2 \right \}_{k=1}^n,
	\\
	W_n               & := \left \{ e^{-2\lambda_k \varepsilon^{\frac{\gamma-1}{1+\gamma}} w_\varepsilon r} \right \}_{k=1}^n = \left \{ e^{-2\lambda_k r} \right \}_{k=1}^n,
\end{align*}
where the window-size $w_\varepsilon = \varepsilon^{\frac{1-\gamma}{1+\gamma}}$ and $r \in \R$ is a parameter. Then we have
\begin{equation*}
	d_{n,\varepsilon}^2 (t_\varepsilon + r w_\varepsilon,x_0) =  V_{n,\varepsilon} \cdot W_n,
\end{equation*}
where $\cdot$ denotes the usual inner product in $\mathbb{R}^n$.
Applying \cref{lem:pointwise_intro,lem:time_intro} we see that each entry in $V_{n,\varepsilon}$ satisfies for small enough $\varepsilon > 0$, there is a constant $C:=C(\gamma) \geq 1$ such that
\begin{equation*}
	\frac{1}{C} e^{-2\frac{\lambda_k \log(C)}{\lambda_1}} \leq e^{-2\lambda_k \varepsilon^{\frac{\gamma-1}{1+\gamma}} t_\varepsilon} \left ( \int \varrho_{\varepsilon,0}(x)\psi_{k,\varepsilon}(x) \dx \right)^2 \leq
	C e^{2\frac{\lambda_k \log(C)}{\lambda_1}}.
\end{equation*}
Thus,
\begin{align*}
	\limsup_{\varepsilon \to 0} d_{n,\varepsilon} (t_\varepsilon + r w_\varepsilon ,x_0) & \leq \sqrt{(\limsup_{\varepsilon \to 0} V_{n,\varepsilon}) \cdot W_n} =: g_+(r)<\infty,
	\\
	\liminf_{\varepsilon \to 0} d_{n,\varepsilon} (t_\varepsilon + r w_\varepsilon ,x_0) & \geq \sqrt{(\liminf_{\varepsilon \to 0} V_{n,\varepsilon}) \cdot W_n} =: g_-(r)>0.
\end{align*}

We now write out the separate centered case $x_0=0$. Let
\begin{equation*}
	E_n:=\{1\leq j\leq n:\ \psi_j(0)\neq 0\}.
\end{equation*}
If $E_n=\emptyset$, then by parity every Fourier coefficient in \cref{eq:dnvarepsilon_intro} vanishes and hence $d_{n,\varepsilon}(t,0)=0$ for all $t\geq 0$. Assume now that $E_n\neq\emptyset$ and let
\begin{equation*}
	m:=\max E_n.
\end{equation*}
Then $\psi_m$ is even, and every odd mode has zero coefficient. Define $t_\varepsilon$ by
\begin{equation*}
	e^{-2\lambda_m \varepsilon^{\frac{\gamma-1}{1+\gamma}} t_\varepsilon}
	\left(\int \varrho_{\varepsilon,0}(x)\psi_{m,\varepsilon}(x)\,\dx\right)^2=1,
\end{equation*}
and let $w_\varepsilon:=\varepsilon^{\frac{1-\gamma}{1+\gamma}}$. By the centered part of \cref{lem:pointwise_intro}, for every $j\in E_n$,
\begin{equation*}
	\left\lvert \int \varrho_{\varepsilon,0}(x)\psi_{j,\varepsilon}(x)\,\dx\right\rvert
	\approx
	\varepsilon^{\frac{\gamma(1-\gamma)}{1+\gamma}}
	e^{\frac{\lambda_j}{1-\gamma}\varepsilon^{\frac{\gamma(\gamma-1)}{\gamma+1}}}.
\end{equation*}
Hence, for every $j\in E_n$ with $j<m$,
\begin{align*}
	 & -2\lambda_j \varepsilon^{\frac{\gamma-1}{1+\gamma}} t_\varepsilon
	+2\frac{\lambda_j}{1-\gamma}\varepsilon^{\frac{\gamma(\gamma-1)}{\gamma+1}}
	+2\frac{\gamma(1-\gamma)}{1+\gamma}\log\varepsilon
	\\
	 & \qquad=
	2\frac{\gamma(1-\gamma)}{1+\gamma}\Bigl(1-\frac{\lambda_j}{\lambda_m}\Bigr)\log\varepsilon+\mathsf O(1),
\end{align*}
which tends to $-\infty$ as $\varepsilon\to 0$ because $\lambda_j<\lambda_m$ and $\log\varepsilon\to -\infty$. Therefore,
\begin{equation*}
	e^{-2\lambda_j \varepsilon^{\frac{\gamma-1}{1+\gamma}} t_\varepsilon}
	\left(\int \varrho_{\varepsilon,0}(x)\psi_{j,\varepsilon}(x)\,\dx\right)^2\to 0
	\qquad \text{for every } j<m.
\end{equation*}
Since the odd coefficients vanish exactly, the $m$-th mode is the unique non-vanishing contribution in the limit. Consequently, for any fixed $r\in\mathbb R$,
\begin{equation*}
	d_{n,\varepsilon}^2(t_\varepsilon+r w_\varepsilon,0)
	=
	\sum_{j=1}^n e^{-2\lambda_j \varepsilon^{\frac{\gamma-1}{1+\gamma}}r w_\varepsilon}
	e^{-2\lambda_j \varepsilon^{\frac{\gamma-1}{1+\gamma}} t_\varepsilon}
	\left(\int \varrho_{\varepsilon,0}(x)\psi_{j,\varepsilon}(x)\,\dx\right)^2
	\to e^{-2\lambda_m r},
\end{equation*}
and hence
\begin{equation*}
	d_{n,\varepsilon}(t_\varepsilon+r w_\varepsilon,0)\to e^{-\lambda_m r}.
\end{equation*}
Thus, the centered case is governed by the highest even mode present in the truncation and exhibits window cut-off (indeed profile cut-off) with profile $e^{-\lambda_m r}$ whenever such a mode exists.

\subsubsection{{Proof of \cref{thm:main:2}}}\label{sec:proofdos}\hfill

\noindent
Fix $1\leq k\leq n$. By \cref{lem:g:scaling}, the $k$-th eigenfunction of $\mathcal{L}_\varepsilon$ has the form
\begin{equation*}
	\psi_{k,\varepsilon}(x)=\psi_k\bigl(x\varepsilon^{-\frac{1}{1+\gamma}}\bigr),
\end{equation*}
where $\psi_k$ is the corresponding eigenfunction of $\mathcal{L}_1$.
Since $\gamma>1$, every exponent $1-(2i-1)\gamma$ appearing in the expansion of \cref{thm:main:3prime} is negative. Hence, \cref{thm:main:3prime} shows that $\psi_k$ is bounded on $[x_k^\ast,\infty)$, where $x_k^\ast$ is its turning point. Applying the same argument to $x\mapsto \psi_k(-x)$ yields boundedness on $(-\infty,-x_k^\ast]$, and continuity gives boundedness on the remaining compact interval. Therefore,
\begin{equation*}
	\|\psi_k\|_{L^\infty(\mathbb{R})}<\infty.
\end{equation*}
By the scaling relation above,
\begin{equation*}
	\|\psi_{k,\varepsilon}\|_{L^\infty(\mathbb{R})}=\|\psi_k\|_{L^\infty(\mathbb{R})}
\end{equation*}
for every $\varepsilon>0$. Define
\begin{equation*}
	M_n:=\max_{1\leq k\leq n}\|\psi_k\|_{L^\infty(\mathbb{R})}<\infty.
\end{equation*}
Since $\delta(\varepsilon)=1$, the initial datum $\varrho_{\varepsilon,0}$ is a probability density on $B_1(x_0)$, and therefore
\begin{equation*}
	\left|\int_{\mathbb{R}} \varrho_{\varepsilon,0}(x)\psi_{k,\varepsilon}(x)\,\dx\right|
	\leq \|\psi_{k,\varepsilon}\|_{L^\infty(\mathbb{R})}
	\leq M_n.
\end{equation*}
Using the spectral representation from \cref{cor:separation}, for every $t\geq 0$ we obtain
\begin{equation*}
	d_{n,\varepsilon}(t,x_0)^2
	=
	\sum_{k=1}^n e^{-2\lambda_{k,\varepsilon}t}
	\left(\int_{\mathbb{R}} \varrho_{\varepsilon,0}(x)\psi_{k,\varepsilon}(x)\,\dx\right)^2
	\leq
	\sum_{k=1}^n M_n^2
	=
	nM_n^2.
\end{equation*}
Hence,
\begin{equation*}
	d_{n,\varepsilon}(t,x_0)\leq \sqrt{n}\,M_n
\end{equation*}
uniformly in $t\geq 0$, $\varepsilon>0$, and $x_0\in\mathbb{R}$.
If cut-off in the sense of \cref{eq:cutdef} were to hold for some timescale $t_\varepsilon$, then for each $c\in(0,1)$ we would need
\begin{equation*}
	d_{n,\varepsilon}(ct_\varepsilon,x_0)\to \infty,
\end{equation*}
as $\varepsilon\to 0$. This is impossible because of the uniform bound above. Therefore, there is no cut-off phenomenon for $\gamma>1$.

\section{The growth of the Fourier coefficients and asymptotics of the mixing times for \texorpdfstring{$\frac{1}{3}  < \gamma < 1$}{1/3 < gamma < 1}}\label{sec:auxiliary}

In this section, we provide the proofs of \cref{lem:pointwise_intro} and \cref{lem:time_intro}.

\subsection{Growth of the Fourier coefficients \texorpdfstring{$\frac{1}{3}<\gamma<1$ and $\varepsilon>0$}{1/3 < gamma < 1 and epsilon > 0}}
\begin{proof}[Proof of \cref{lem:pointwise_intro}]
	Along the proof, let $x^*_k$ be the turning-point of the $k$-th eigenfunction $\psi_k$ as defined in~\cref{lem:WKBprime}. Then  \cref{cor:asymptoticsprime} implies for $x \in [x^*_k \varepsilon^{\frac{1}{1+\gamma}},\infty)$ that
	\begin{equation*}
		\psi_{k,\varepsilon}(x) \approx C e^{\lambda_{k} \frac{|x \varepsilon^{-\frac{1}{\gamma+1}}|^{1-\gamma}}{1-\gamma}},
	\end{equation*}
	where the implied constant is independent of $\varepsilon$.
	Since $\delta = \varepsilon^{\frac{1-\gamma}{\gamma+1}}$, for all sufficiently small $\varepsilon$ we have
	\[
		B_{\delta}(x_0) \subset \bigl[\,x_k^*\,\varepsilon^{\frac{1}{\gamma+1}},\,\infty\bigr),
	\]
	so the growth estimate for the eigenfunction applies throughout the ball. By the monotonicity of
	$r \mapsto r^{1-\gamma}$ and the smallness of $\delta$ relative to $|x_0|$, the integrand varies by at most a
	parameter-independent factor on $B_\delta(x_0)$. Consequently, the average over the ball is uniformly comparable
	to the pointwise value at the center:
	\[
		\fint_{B_{\delta}(x_0)}
		\exp \left (\lambda_k\,\tfrac{\bigl|x\,\varepsilon^{-\frac{1}{\gamma+1}}\bigr|^{\,1-\gamma}}{1-\gamma}\right )\,dx
		\approx
		\exp \Bigl(\lambda_k\,\varepsilon^{\frac{\gamma-1}{\gamma+1}}\,\frac{|x_0|^{\,1-\gamma}}{1-\gamma}\Bigr),
	\]
	where the implied constants are independent of $\varepsilon$. Combining this with the preceding estimates yields
	\cref{eq:xnozero}.

	In the sequel, we consider the case $x_0 = 0$.
	We do a change of variables and split the integral into two parts for $\widehat \delta:= \delta \varepsilon^{-1/(1+\gamma)}$ as follows:
	\begin{align*}
		\fint_{B_\delta(0)} \psi_{k,\varepsilon}(x) \dx =
		\frac{1}{\widehat \delta} \int_{-x^*_k}^{x^*_k} \psi_{k}(y) \dy + \frac{1}{\widehat \delta} \int_{B_{\widehat \delta}(0) \setminus (-x^*_k,x^*_k)} \psi_{k}(y) \dy.
	\end{align*}
	The first term tends to $0$ as $\varepsilon \to 0$ since $\widehat \delta^{-1} = \varepsilon^{\gamma/(1+\gamma)} \to 0$. For the second term, an elementary asymptotic estimate of the tail integral yields
	\begin{equation*}
		\lim_{\varepsilon \to 0} \frac{\widehat \delta^{-1} \int_{B_{\widehat \delta}(0) \setminus (-x^*_k,x^*_k)} e^{\lambda_{k} \frac{|y|^{1-\gamma}}{1-\gamma}} \dy}{\widehat \delta^{\gamma-1} e^{\lambda_{k} \frac{\widehat \delta^{1-\gamma}}{1-\gamma}}} = C_0,
	\end{equation*}
	as such for small enough $\varepsilon$ we have that one side of the second term satisfies (up to a sign change)
	\begin{align*}
		\frac{1}{\widehat \delta} \int_{x^*_k}^{\widehat \delta} \psi_{k}(y) \dy \approx \varepsilon^{\frac{\gamma(1-\gamma)}{1+\gamma}} e^{\frac{\lambda_{k}}{1-\gamma} \varepsilon^{\frac{\gamma(\gamma-1)}{\gamma+1}} }.
	\end{align*}

	In order to calculate the full integral we need to know the parity of the eigenfunction.
	Since the potential is even, the reflection $x\mapsto -x$ preserves the eigenspace associated with $\lambda_k$. Because the eigenvalue is simple in one dimension, the reflected eigenfunction must equal either $u$ or $-u$. Thus, the left tail agrees with the right tail up to a sign, and it remains to determine whether $u$ is symmetric or antisymmetric.

	Since the equation is
	\begin{equation*}
		-u'' + V' u' = \lambda_k u
	\end{equation*}
	and $\lambda_k > 0$, then by the maximum principle we have that if $u \geq 0$ on some interval $(z_1,z_2)$ the minimum is attained at the end-points $z_1,z_2$. Assuming that $u(z_1) = u(z_2) = 0$ then, since $V'$ is continuous everywhere, the Hopf Lemma (\cite{Hopf}) tells us that $u'(z_1) > 0$ and $u'(z_2) < 0$.

	By the transformation to the Schr\"odinger operator in \cref{eq:Wittenprime} we know that on each connected component outside the interval $[-x^*_k,x^*_k]$ the eigenfunction $u$ does not have any zeros. From the regularity of the solution and the above argument we can now conclude that the eigenfunction $u$ has a finite number of zeros in $[-x^*_k,x^*_k]$, which are symmetric around $0$. On the one hand, if $u(0) = 0$ then the eigenfunction has an odd number of zeros and thus has to be antisymmetric. On the other hand, if $u(0) \neq 0$ then the eigenfunction has an even number of zeros and thus has to be symmetric.
\end{proof}

\subsection{Mixing times asymptotics for \texorpdfstring{$\frac{1}{3}<\gamma<1$}{1/3 < gamma < 1}}

\begin{proof}[Proof of \cref{lem:time_intro}]
	We start with  the case when $x_0 \neq 0$. By \cref{cor:separation}, $t_\varepsilon$ solves
	\begin{equation*}
		1 = d_{1,\varepsilon}(t_\varepsilon) = e^{-\lambda_{1,\varepsilon} t_\varepsilon} \left \lvert \int \varrho_{\varepsilon,0}(x) \psi_{1,\varepsilon}(x) \dx \right \rvert,
	\end{equation*}
	which by \cref{cor:asymptoticsprime,lem:pointwise_intro} implies that for that $t_\varepsilon$ it holds for small enough $\varepsilon$
	\begin{equation*}
		\frac{1}{C} \leq e^{-\lambda_{1} \varepsilon^{\frac{\gamma-1}{1+\gamma}} t_\varepsilon}
		e^{ \lambda_1 \varepsilon^{\frac{\gamma-1}{1+\gamma}} \frac{|x_0|^{1-\gamma}}{1-\gamma}} \leq C.
	\end{equation*}
	Taking the log and reshuffling we get the bounds in \cref{eq:mixingtime:bound}.

	By using the definition of $d_{n,\varepsilon}(t)$, $t_\varepsilon$ and the asymptotics of $\psi_{k,\varepsilon}$ from \cref{cor:separation,cor:asymptoticsprime} we get for small enough $\varepsilon$
	\begin{equation*}
		(d_{n,\varepsilon}(t_\varepsilon))^2
		=
		\sum_{k=1}^n e^{-2\lambda_k \varepsilon^{\frac{\gamma-1}{1+\gamma}} t_\varepsilon} \left ( \int \varrho_{\varepsilon,0}(x) \psi_{k,\varepsilon}(x) \dx \right )^2
		\approx
		\sum_{k=1}^n e^{-2\lambda_k \varepsilon^{\frac{\gamma-1}{1+\gamma}} t_\varepsilon} e^{2 \lambda_k \varepsilon^{\frac{\gamma-1}{1+\gamma}} \frac{|x_0|^{1-\gamma}}{1-\gamma}}
		\approx 1.
	\end{equation*}

	Now, we treat the case $x_0= 0$. If $E_n=\emptyset$, then every Fourier coefficient vanishes by parity and there is nothing to prove. Assume therefore that $E_n\neq\emptyset$ and let $m:=\max E_n$. As above, the time $t_\varepsilon$ satisfies
	\begin{equation*}
		1 = d_{m,\varepsilon}(t_\varepsilon,0)
		=
		e^{-\lambda_m \varepsilon^{\frac{\gamma-1}{1+\gamma}} t_\varepsilon} \left \lvert \int \varrho_{\varepsilon,0}(x) \psi_{m,\varepsilon}(x) \dx \right \rvert
		\approx
		e^{-\lambda_m \varepsilon^{\frac{\gamma-1}{1+\gamma}} t_\varepsilon} \varepsilon^{\frac{\gamma(1-\gamma)}{1+\gamma}} e^{\frac{\lambda_m}{1-\gamma} \varepsilon^{\frac{\gamma(\gamma-1)}{\gamma+1}}}.
	\end{equation*}
	From the centered asymptotics in \cref{lem:pointwise_intro} we obtain
	\begin{equation*}
		\frac{1}{C} \leq e^{-\lambda_m \varepsilon^{\frac{\gamma-1}{1+\gamma}} t_\varepsilon}
		\varepsilon^{\frac{\gamma(1-\gamma)}{1+\gamma}} e^{\frac{\lambda_m}{1-\gamma} \varepsilon^{\frac{\gamma(\gamma-1)}{\gamma+1}}} \leq C,
	\end{equation*}
	and hence \cref{eq:mixingtime:bound:zero} follows after taking logarithms. The fact that $d_{n,\varepsilon}(t_\varepsilon,0) \approx 1$ follows from the same argument as above.
\end{proof}

\section{{The growth of the eigenfunctions}}\label{sec:growtheigen}

In this section, we give the proofs of  \cref{lem:WKBprime}, \cref{thm:main:3prime} and \cref{cor:asymptoticsprime}.

\subsection{{WKB approximation}}\label{sec:WKBproof}
In order to obtain the asymptotic growth rate we employ a WKB type approximation for the Schrödinger equation.
First let us recall the classical transformation of the Sturm--Liouville operator to a Schrödinger equation.
That is, the equation for a given constant $\eta > 0$
\begin{equation*}
	-\eta u''(x) + V'(x) u'(x) = g(x) u(x)
\end{equation*}
satisfied in a classical sense at $x$, for $V$ twice continuously differentiable at $x$, can be transformed into the Schr\"odinger equation (a.k.a.~Witten--Laplacian)
\begin{equation} \label{eq:Witten}
	\eta^2 v'' = \left ( \frac{|V'|^2}{4} - \frac{\eta}{2} V'' - \eta g \right ) v,
\end{equation}
where $v = e^{-\frac{1}{2\eta} V} u$. Furthermore, if we know $u \in L^2(e^{-\frac{1}{\eta}V})$ then $v \in L^2(\ud x)$.

The WKB approximation is a technique coming from the asymptotic study of the eigenfunctions of the Schr\"odinger equation, which is an extension of the Liouville--Green method, see~\cite{Olver,Titchmarsh}.
The idea is to consider a power series of the log of the solution in terms of $\eta$.
It turns out that even though in our setting we are ultimately interested in $\eta=1$, this is still the relevant expansion to consider. The WKB construction stated earlier in \cref{lem:WKBprime} identifies the successive correction terms $S_n$ and the remainder $R_n$ systematically. In the specific situation relevant for this manuscript, namely when $f(x)=|x|^\gamma$ and $g \equiv \lambda$ is a constant, the later decay estimates will show that the terms $S_n$ become progressively smaller as $x\to\infty$, and that for sufficiently large $n$ the remainder is bounded. This is what allows us to recover the asymptotic growth rate of the eigenfunctions up to a multiplicative constant.

\begin{proof}[Proof of \cref{lem:WKBprime}]
	Introduce $\eta$ as a formal parameter and write
	\begin{equation*}
		v = e^{\frac{1}{\eta}\sum_{i=0}^{n} \eta^i S_i + \eta^{n} R_n}.
	\end{equation*}
	Let $f=V'$. Then the $\eta$-dependent Witten equation associated with \cref{eq:Wittenprime} reads
	\begin{equation*}
		\eta^2 v'' = \left ( \frac{f^2}{4} - \frac{\eta}{2} f' - \eta \lambda \right ) v.
	\end{equation*}
	If we write
	\begin{equation*}
		W = \frac{1}{\eta}\sum_{i=0}^{n} \eta^i S_i + \eta^{n} R_n,
	\end{equation*}
	then
	\begin{equation*}
		\eta^2\left(W'' + (W')^2\right) = \frac{f^2}{4} - \frac{\eta}{2} f' - \eta \lambda.
	\end{equation*}
	Matching powers of $\eta$ gives
	\begin{equation*}
		S_0' = -\frac{f}{2} \quad \textrm{ and }\quad S_1' = \frac{\lambda}{f},
	\end{equation*}
	where in $S_0'$ we have chosen the negative branch of the square root. For $k \geq 2$ we get the hierarchy
	\begin{equation*}
		S_{k-1}'' + \sum_{n_1+n_2=k} S_{n_1}'S_{n_2}' = 0,
	\end{equation*}
	which is an equation for $S_k'$, since the terms with $n_1=0$ or $n_2=0$ give $2S_0'S_k'=-fS_k'$. Thus,
	\begin{equation*}
		S_k' = \frac{1}{f} S_{k-1}'' + \frac{1}{f} \sum_{j=1}^{k-1} S_j'S_{k-j}'.
	\end{equation*}
	Since $S_0(x^*)=0$ by definition and $S_k(x^*)=0$ for $k\geq 2$, this yields the expansion in \cref{ec:S0S1Sk}.

	Now set $\eta=1$ and write $S=\sum_{j=0}^{n} S_j$. By the identities above,
	\begin{equation}\label{eq:cr11}
		S'' + (S')^2 = \frac{f^2}{4} - \frac{V''}{2} - \lambda + S_n'' + \sum_{n_1+n_2 \geq n+1}^{n_1 \leq n; n_2 \leq n} S_{n_1}'S_{n_2}'.
	\end{equation}
	Inserting $v=e^{S+R_n}$ into \cref{eq:Wittenprime} and using \cref{eq:cr11} yields
	\begin{equation*}
		R_n'' + 2R_n' \sum_{j=0}^{n} S_j' + (R_n')^2 = -S_n'' - \sum_{n_1+n_2 \geq n+1}^{n_1 \leq n; n_2 \leq n} S_{n_1}'S_{n_2}'.
	\end{equation*}
	Now, we write $\widehat u = \exp(R_{n})$, i.e., $R_{n} = \log(\widehat u)$ and using the chain-rule we have
	\begin{equation}\label{eq:rules}
		R_{n}' = \frac{\widehat u'}{\widehat u}, \quad
		R_{n}'' + (R_n')^2 = \frac{\widehat u''}{\widehat u}.
	\end{equation}
	Inserting \cref{eq:rules} in \cref{eq:cr11} we obtain
	\begin{equation*}
		\frac{\widehat u''}{\widehat u} + 2\frac{\widehat u'}{\widehat u} \sum_{j=0}^{n} S_j' = -S_n'' - \sum_{n_1+n_2 \geq n+1}^{n_1 \leq n; n_2 \leq n} S_{n_1}'S_{n_2}'.
	\end{equation*}
	Multiplying by $\widehat u$ gives
	\begin{equation*}
		-\widehat u'' - 2\widehat u' \sum_{j=0}^{n} S_j' = \widehat u \left ( S_n'' + \sum_{n_1+n_2 \geq n+1}^{n_1 \leq n; n_2 \leq n} S_{n_1}'S_{n_2}' \right ),
	\end{equation*}
	which is the Sturm--Liouville equation for $\widehat u$ given in \cref{eq:SLL}.
\end{proof}

\begin{lemma} \label{lem:WKB:decay}
	Let $f(x) = V'(x) = |x|^\gamma$ and $v$ be as in \cref{lem:WKBprime} for some $\gamma > 0$. Then
	for $k\geq 2$ there exists a positive constant  $C_k$ such that
	\begin{align}\label{ec.SkCk}
		|S_k'(x)|\leq C_k
		\begin{cases}
			|x|^{-(2k-1)\gamma},  & 0<\gamma\leq 1, \\
			|x|^{-(k\gamma+k-1)}, & \gamma>1,
		\end{cases}
		\quad \textrm{ for all }\quad x\geq x^*,
	\end{align}
	where $x^*$ is the corresponding turning point from \cref{lem:WKBprime}.

	When $0<\gamma\leq 1$ and $1/\gamma$ is an odd integer, that is $1-(2k-1)\gamma = 0$, it follows that
	\begin{equation}\label{eq:Sklog}
		|S_k(x)| \leq  C_k\log(x) \quad \textrm{ for all }\quad x\geq x^*.
	\end{equation}

	In particular, whenever $n > \frac{1-\gamma}{2\gamma}$ the quantity
	\begin{align}\label{eq:SSs}
		\frac{S_n''+\sum_{n_1+n_2 \geq n+1}^{n_1 \leq n; n_2 \leq n} S_{n_1}'S_{n_2}'}{\sum_{j=0}^{n} S_j'} \in L^1(x^*,\infty).
	\end{align}
\end{lemma}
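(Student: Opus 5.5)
The plan is to prove all three assertions from one structural description of $S_k'$, obtained by induction on $k$ from the recursion \cref{ec:S0S1Sk}. Throughout, $x\geq x^*$ with $x^*>0$; if necessary, replace $x^*$ by $\max(x^*,1)$, which changes the constants only on a compact interval where everything is continuous.

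\emph{Structural claim.} For every $k\geq 1$, $S_k'$ is a finite linear combination of monomials $c\,x^{-a}$. Each exponent satisfies $a\geq (2k-1)\gamma$ when $0<\gamma\leq 1$, and $a\geq k\gamma+k-1$ when $\gamma>1$.

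The base case is $S_1'=\lambda x^{-\gamma}$, whose exponent is $\gamma$ in both regimes. For the inductive step, use $S_k'=x^{-\gamma}S_{k-1}''+x^{-\gamma}\sum_{j=1}^{k-1}S_j'S_{k-j}'$ and the fact that $\frac{d}{dx}x^{-a}=-a\,x^{-a-1}$. Both pieces are again finite sums of monomials, and it remains to check their exponents.
\begin{itemize}
\item Case $\gamma\leq 1$, derivative term. Its exponents are at least $\gamma+(2k-3)\gamma+1=(2k-2)\gamma+1\geq(2k-1)\gamma$, because $1\geq\gamma$.
\item Case $\gamma\leq 1$, product terms. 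Their exponents are at least $\gamma+(2j-1)\gamma+(2k-2j-1)\gamma=(2k-1)\gamma$.
\item Case $\gamma>1$, derivative term. Its exponents are at least $\gamma+(k-1)\gamma+(k-2)+1=k\gamma+k-1$.
\item Case $\gamma>1$, product terms. Their exponents are at least $\gamma+(j\gamma+j-1)+((k-j)\gamma+k-j-1)=(k+1)\gamma+k-2\geq k\gamma+k-1$, because $\gamma>1$.
\end{itemize}
On $x\geq x^*\geq 1$ we have $x^{-a}\leq x^{-a_{\min}}$, so \cref{ec.SkCk} follows with $C_k$ equal to the sum of the absolute values of the coefficients.

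\emph{Logarithmic case.} Suppose $(2k-1)\gamma=1$. Integrate $S_k'$ from $x^*$ to $x$ term by term, using $S_k(x^*)=0$.
\begin{itemize}
\item Monomials with exponent $a>1$ integrate to bounded functions.
\item Monomials with exponent $a=1$, which can only occur at the minimal exponent, contribute $c\log(x/x^*)$.
\end{itemize}
Hence $|S_k(x)|\leq C+C\log x$. Enlarging $C_k$, and working on $x\geq\max(x^*,e)$ with the compact remainder absorbed, gives \cref{eq:Sklog}.

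\emph{Integrability \cref{eq:SSs}.} I treat only $0<\gamma\leq 1$ in detail; for $\gamma>1$ the same argument applies with even faster decay.
\begin{itemize}
\item Denominator. It equals $-\tfrac12x^\gamma+\lambda x^{-\gamma}+\sum_{j\geq 2}S_j'$, and every term after the first is $\mathsf O(x^{-\gamma})$. Hence $|\sum_j S_j'|\geq c\,x^\gamma$ for $x$ large.
\item Numerator, derivative term. $S_n''$ is $\mathsf O(x^{-(2n-1)\gamma-1})$, which is at most $x^{-2n\gamma}$ since $\gamma\leq1$.
\item Numerator, product terms. For $n_1+n_2\geq n+1$, each $S_{n_1}'S_{n_2}'$ is $\mathsf O(x^{-(2(n_1+n_2)-2)\gamma})$, which is $\mathsf O(x^{-2n\gamma})$.
\end{itemize}
The quotient is therefore $\mathsf O(x^{-(2n+1)\gamma})$. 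This is integrable at infinity exactly when $(2n+1)\gamma>1$, that is, when $n>\frac{1-\gamma}{2\gamma}$.

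The main obstacle is the behaviour near the turning point. The denominator $\sum_j S_j'$ could vanish somewhere in $[x^*,X]$, which would spoil integrability there. I would handle this by enlarging the left endpoint to a point beyond which $|\sum_jS_j'|\geq c\,x^\gamma$. Only the tail matters for the later use in \cref{thm:main:3prime}, and the WKB construction can be started from that larger point without changing the asymptotics.
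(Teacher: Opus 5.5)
Your proposal is correct and follows essentially the same route as the paper: induction through the recursion \cref{ec:S0S1Sk} with exponent bookkeeping in the two regimes of $\gamma$, integration of the $x^{-1}$ bound in the resonant case, and comparison of the numerator's decay against a denominator of size $x^\gamma$. Your version is slightly more careful in two places. Inducting on the full monomial structure of $S_k'$ is what actually controls $S_{k-1}''$, which the paper's induction on the bound \cref{ec.SkCk} alone uses tacitly. And restricting \cref{eq:SSs} to a tail where $|\sum_j S_j'|\gtrsim x^\gamma$ matches how the paper later enlarges $x^*$ in \cref{cor:Rn-bounded}.
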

\begin{proof} For any $k$
	we note  that leading order term of  $S_k'$  is a term of the form $a_k x^{q_k}$ for some $a_k>0$ and $q_k \in \R$.

	By the definition of $S_2$, we
	first note that $S_2'$ satisfies this condition and let us make the induction assumption that \cref{ec.SkCk} is true for every $2 \leq j < k$. Then using \cref{ec:S0S1Sk} and our induction assumption, we get if $0<\gamma\leq 1$, then
	\begin{align*}
		|S_k'(x)|
		 & \lesssim
		|x|^{-\gamma}|x|^{-((2k-3)\gamma+1)}
		+ |x|^{-\gamma}\sum_{j=1}^{k-1}|x|^{-(2j-1)\gamma}|x|^{-(2(k-j)-1)\gamma}
		\\
		 & \lesssim
		|x|^{-((2k-2)\gamma+1)}
		+ |x|^{-(2k-1)\gamma}
		\lesssim
		|x|^{-(2k-1)\gamma}.
	\end{align*}

	If $\gamma>1$, then
	\begin{align*}
		|S_k'(x)|
		 & \lesssim
		|x|^{-\gamma}|x|^{-((k-1)\gamma+k-1)}
		+ |x|^{-\gamma}\sum_{j=1}^{k-1}|x|^{-(j\gamma+j-1)}|x|^{-((k-j)\gamma+(k-j)-1)}
		\\
		 & \lesssim
		|x|^{-(k\gamma+k-1)}
		+ |x|^{-((k+1)\gamma+k-2)}
		\lesssim
		|x|^{-(k\gamma+k-1)}.
	\end{align*}

	This completes the proof of \cref{ec.SkCk}.

	If $0<\gamma\leq 1$ and $1-(2k-1)\gamma = 0$, then \cref{ec.SkCk} yields $|S_k'(x)|\lesssim x^{-1}$, and integrating from $x^*$ to $x$ gives \cref{eq:Sklog}.

	To prove \cref{eq:SSs} we note that from \cref{ec:S0S1Sk,ec.SkCk}, it is enough to prove that
	\begin{align*}
		\frac{|S_n''|}{|S_0'|} \in L^1(x^*,\infty)
	\end{align*}
	and
	\begin{align*}
		\frac{|S_{n_1}'||S_{n_2}'|}{|S_0'|} \in L^1(x^*,\infty)\quad \textrm{ for } \quad n_1 + n_2 = n+1.
	\end{align*}
	If $0<\gamma\leq 1$, then using \cref{ec.SkCk} we get
	\begin{align*}
		\frac{|S_n''(x)|}{|S_0'|}
		 & \lesssim
		\frac{|x|^{-((2n-1)\gamma+1)}}{|x|^{\gamma}}
		=
		|x|^{-(2n\gamma+1)},
		\\
		\frac{|S_{n_1}'(x)||S_{n_2}'(x)|}{|S_0'|}
		 & \lesssim
		\frac{|x|^{-2n\gamma}}{|x|^{\gamma}}
		=
		|x|^{-(2n+1)\gamma}.
	\end{align*}
	If $\gamma>1$, then using \cref{ec.SkCk} we get
	\begin{align*}
		\frac{|S_n''(x)|}{|S_0'|}
		 & \lesssim
		\frac{|x|^{-(n\gamma+n)}}{|x|^{\gamma}}
		=
		|x|^{-((n+1)\gamma+n)},
		\\
		\frac{|S_{n_1}'(x)||S_{n_2}'(x)|}{|S_0'|}
		 & \lesssim
		\frac{|x|^{-((n_1+n_2)\gamma+(n_1+n_2)-2)}}{|x|^{\gamma}}
		\lesssim
		|x|^{-((n+2)\gamma+n-1)}.
	\end{align*}
	Taking $n > \frac{1-\gamma}{2\gamma}$ we conclude \cref{eq:SSs}.
\end{proof}

\subsection{Growth of the error term \texorpdfstring{$R_n$}{Rn}}
\label{sec:growth}

In this subsection, we deal with the error term $R_n$ in the WKB approximation.
In order to achieve this we will actually develop a rather general method for getting the highest order growth of a solution to a Sturm--Liouville equation. The proof relies on the fact that for a solution to the following Sturm--Liouville equation
\begin{align}\label{ec:STODE}
	-u''(x) + f(x) u'(x) = g(x) u(x)
\end{align}
the second order term will not influence the growth in the regions where $f$ is much bigger than $g$ and the dominating growth will be that of the first order ordinary differential equation (for short ODE)
\begin{align*}
	u' = \frac{g}{f} u,\quad
	\textrm{ that is, }\quad u = C \exp \left (\int \frac{g}{f} \right ).
\end{align*}
The next lemma gives us the necessary reduction: we write \cref{ec:STODE} as a first order system and express its solution in polar coordinates. The angle variable then satisfies a first-order ODE that is \textit{decoupled} from the radial variable. Thus, the entire problem reduces to building barriers for that first order ODE.

\begin{lemma} \label{lem:ODE}
	Let $f \in \mathcal{C}(\R_+)$ and $g \in \mathcal{C}(\R_+)$. Let $u \in \mathcal{C}^2(\R_+)$ be a solution to
	\begin{equation}\label{eq:g:ODE}
		-u'' + f u' = g u\quad \textrm{ in }\quad \R_+
	\end{equation}
	with boundary condition $(u(0),u'(0)) = X_0\in \R^2$.
	For all $t\geq 0$
	denote $X(t):= (u(t),u'(t))$ and define the radial component $r$ and angular component $Y$ such that $X(t) = r(t) Y(t)$ with $Y(t): = (\cos(\theta(t)), \sin(\theta(t)))$. Then $r \in \mathcal{C}^1(\R^+)$ and satisfies the non-homogeneous ODE
	\begin{equation} \label{eq:ODE:r}
		r'(t) = r(t) A(t) Y(t) \cdot Y(t), \quad \text{where} \quad A(t) =
		\begin{bmatrix}
			0     & 1    \\
			-g(t) & f(t)
		\end{bmatrix},
	\end{equation}
	where $\cdot$ denotes the usual inner product in $\mathbb{R}^2$.
	Furthermore, $\theta \in \mathcal{C}^1(\R_+)$ satisfies the ODE
	\begin{equation} \label{eq:ODE:theta}
		\theta'(t) = -1 - (g(t)-1) \cos^2(\theta(t)) + \frac{f(t)}{2} \sin(2\theta(t)).
	\end{equation}
\end{lemma}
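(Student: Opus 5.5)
The lemma is a standard polar-coordinate (Prüfer-type) reduction, so I would prove it by direct computation. First I rewrite \cref{eq:g:ODE} as a first-order linear system. With $X(t)=(u(t),u'(t))$, the equation $u''=fu'-gu$ gives
\begin{equation*}
	X'(t)=A(t)X(t),\qquad A(t)=\begin{bmatrix} 0 & 1\\ -g(t) & f(t)\end{bmatrix},
\end{equation*}
and $X\in\mathcal C^1(\R_+;\R^2)$ because $u\in\mathcal C^2$.

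Next I handle the regularity of $r$ and $\theta$. If $X_0=0$, uniqueness for this linear ODE with continuous coefficients gives $X\equiv 0$. Then $r\equiv 0$ solves \cref{eq:ODE:r} trivially, and $\theta$ can be taken as any $\mathcal C^1$ solution of \cref{eq:ODE:theta}. That ODE has a right-hand side continuous in $t$ and Lipschitz in $\theta$, uniformly on compacts, and it is globally solvable because the right-hand side grows at most linearly. If $X_0\neq 0$, uniqueness ensures $X(t)\neq 0$ for all $t$, so $r=|X|$ is $\mathcal C^1$. A continuous angle $\theta$ with $X=rY$ exists by the lifting property of the covering $\R\to S^1$ applied to the $\mathcal C^1$ curve $X/|X|$, and this lift is $\mathcal C^1$.

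Then I differentiate $X=rY$. Since $Y'=\theta'Y^\perp$ with $Y^\perp=(-\sin\theta,\cos\theta)$, we get $X'=r'Y+r\theta' Y^\perp$. Taking the inner product of $X'=rAY$ with $Y$ gives $r'=r\,AY\cdot Y$, which is \cref{eq:ODE:r}. Taking the inner product with $Y^\perp$ gives $\theta'=AY\cdot Y^\perp$. For the latter, $AY=(\sin\theta,\,-g\cos\theta+f\sin\theta)$, so
\begin{equation*}
	AY\cdot Y^\perp=-\sin^2\theta-g\cos^2\theta+f\sin\theta\cos\theta .
\end{equation*}
Writing $-\sin^2\theta=-1+\cos^2\theta$ and $\sin\theta\cos\theta=\tfrac12\sin 2\theta$ yields exactly \cref{eq:ODE:theta}. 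The structural point I want to highlight is that the $r$ factor cancels in the $\theta$ equation, so the angle evolves by a scalar first-order ODE that is decoupled from $r$.

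The only mildly delicate step is the existence of a $\mathcal C^1$ global angle, which requires $X$ never to vanish. That is where uniqueness for the linear system is essential, and the degenerate case $X_0=0$ has to be treated separately as above. The rest of the proof is routine algebra.
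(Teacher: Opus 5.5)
Your proposal is correct and follows the paper's argument. Both write $X=rY$, differentiate, and project $X'=AX$ onto $Y$ and $Y^\perp$ to obtain $r'=r\,AY\cdot Y$ and $\theta'=AY\cdot Y^\perp$, then expand. The only differences are minor: the paper gets the $r$-equation from $(r^2)'=2AX\cdot X$ rather than by projecting onto $Y$, and your justification of the $\mathcal C^1$ regularity of $r$ and $\theta$ (nonvanishing of $X$ by uniqueness, the lifting argument, and the degenerate case $X_0=0$) fills in a point the paper leaves implicit.
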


\begin{remark}
	It is not hard to see that
	the ODE \cref{eq:g:ODE} can be expressed as two-dimensional ODE
	\begin{equation} \label{eq:ODE:system}
		X' = A X,
	\end{equation}
	where $X(t)= (u(t),u'(t))^T$ and
	$A$ is the matrix in \cref{eq:ODE:r}. Nevertheless,   computing the explicit solution of \cref{eq:ODE:system} is hard  due to the  non-homogeneity.
\end{remark}
\begin{proof}[Proof of \cref{lem:ODE}]
	We start writing  $X \cdot X = r^2$ and differentiate to get
	\begin{equation*}
		(r^2)' =  (X \cdot X)' = 2 A X \cdot X = 2 r^2 A Y \cdot Y,
	\end{equation*}
	which gives \cref{eq:ODE:r}.
	To find the ODE for $\theta$ we first note that
	\begin{equation*}
		X' = r' Y + r Y' = r (A Y \cdot Y) Y + r \theta' Y^\perp,
	\end{equation*}
	where $Y^\perp = (-\sin(\theta),\cos(\theta))$.
	Using \cref{eq:ODE:system} and taking the dot product of the above with $Y^\perp$ gives
	\begin{equation*}
		r \theta'=A X \cdot Y^\perp =r A Y \cdot Y^\perp,
	\end{equation*}
	which gives
	\begin{equation*}
		\theta' = A Y \cdot Y^\perp.
	\end{equation*}
	Rewriting the above explicitly completes the proof of \cref{eq:ODE:theta}.
\end{proof}

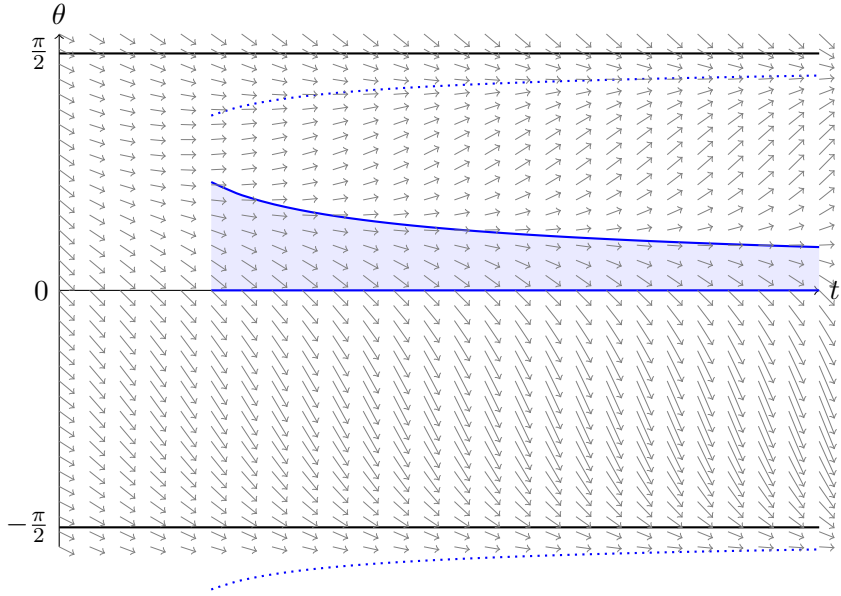
\begin{figure}
	\centering
	\ifdraft{
		\begin{tikzpicture}
			\path[use as bounding box] (0,0) rectangle (5cm,5cm);
		\end{tikzpicture}
	}{
		\begin{tikzpicture}[x=2cm,y=2cm]
			\draw[->] (0,0) -- (5,0) node[right] {$t$};
			\draw[->] (0,-1.7) -- (0,1.7) node[above] {$\theta$};
			\node[left] at (0,1.57) {$\tfrac{\pi}{2}$};
			\node[left] at (0,-1.57) {$-\tfrac{\pi}{2}$};
			\node[left] at (0,0) {$0$};
			\def\fx{(10*\x)^(1/2)}
			\def\gx{2}
			\def\tomte{atan((\fx+sqrt(\fx^2-4*\gx))/2)*pi/180}
			\draw[thick, blue, dotted, domain=1:5, smooth, variable=\x]
			plot ({\x},{\tomte});
			\draw[thick, blue, dotted, domain=1:5, smooth, variable=\x]
			plot ({\x},{-pi/2-asin((3*sqrt(10*\x)-sqrt(10*\x-8))/(10*\x+1))*pi/360});
			\def\upperexpr{atan((\fx-sqrt(\fx^2-4*\gx))/2)*pi/180}
			\def\lowerexpr{\x*0}			\fill[blue!20,opacity=0.4,smooth,domain=1:5]
			plot ({\x},{\upperexpr})
			-- plot[domain=5:1] ({\x},{\lowerexpr})
			-- cycle;
			\draw[thick, blue, domain=1:5, smooth] plot ({\x},{\upperexpr});
			\draw[thick, blue, domain=1:5, smooth] plot ({\x},{\lowerexpr});
			\draw[thick] (0,1.5707963268) -- (5,1.5707963268);
			\draw[thick] (0,-1.5707963268) -- (5,-1.5707963268);
			\foreach \x in {0,0.2,...,5} {
					\foreach \y in {-1.7,-1.6,...,1.7} {
							\draw[->,gray,very thin] (\x,\y) -- ++(0.1,{0.05*(-1-(2-1)*((cos(\y r))^2)+((\x*10)^(1/2)/2)*((sin((2*\y) r))))});
						}
				}
		\end{tikzpicture}}
	\caption{The vector field for the angle ODE \cref{eq:ODE:theta} when $\gamma=1/2$ and $g=2$. The blue shaded region depicts the area between $[\theta_l=0,\theta_u]$, and the dotted curve are the asymptotes for the limits $\pi/2$ and $-\pi/2$. The $\omega$-limit is expected to be the set $\{-\pi/2,0,\pi/2\}$.}
	\label{fig:angleODE}
\end{figure}

The next proposition makes the phase portrait in Figure~\ref{fig:angleODE} precise. It provides barrier curves $\theta_l$ and $\theta_u$ that separate the trajectories converging to $-\pi/2$ and $\pi/2$, respectively, and in particular implies that the $\omega$-limit set of any solution of \cref{eq:ODE:theta} is contained in $\{-\pi/2,0,\pi/2\}$.

To control the radial component, we next rewrite the coefficient in the equation for $r$ in terms of $\theta$. This identity will be used in the next two results: first to show that $r$ remains bounded along trajectories with $\theta(t)\to 0$, and then, under the additional $L^2$ assumption, to rule out the limits $\pm\pi/2$ and conclude that $\theta(t)\to 0$.
By the definition of $A$ and $Y$ we obtain
\begin{equation}\label{eq:AYY}
	A Y \cdot Y = \sin(\theta)\cos(\theta) (1-g) + f \sin^2(\theta),
\end{equation}
where $\theta$ solves \cref{eq:ODE:theta}.

\begin{proposition} \label{lem:theta:convergence}
	Let $f$ and $g$ be as in \cref{lem:ODE}.
	Assume that $f^2(t)\geq 4g(t)$ for all $t\geq 0$, that $f(t) \to \infty$ and $g(t)/f(t) \to 0$ as $t \to \infty$.
	Then there exists a time $T>0$, an increasing function $\theta_l:[T,\infty)\to(-\pi/2,0]$ and a decreasing function $\theta_u:[T,\infty)\to[0,\pi/2)$ with
	\begin{equation*}
		\theta_l(t)\to0 \quad \text{and} \quad \theta_u(t)\to0 \quad \text{as } t\to\infty,
	\end{equation*}
	such that:

	\begin{itemize}
		\item[(a)] For each $t_0\ge T$, any solution $\theta$ to \cref{eq:ODE:theta} satisfying
		      \begin{equation*}
			      \theta(t_0)\in[\theta_u(t_0),\pi/2]
		      \end{equation*}
		      converges to $\pi/2$ as $t\to\infty$.

		\item[(b)] For each $t_0\ge T$, any solution $\theta$ to \cref{eq:ODE:theta} satisfying
		      \begin{equation*}
			      \theta(t_0)\in[-\pi/2,\theta_l(t_0)]
		      \end{equation*}
		      converges to $-\pi/2$ as $t\to\infty$.
	\end{itemize}

	Moreover, $\theta_u$ and $\theta_l$ can be chosen explicitly as
	\begin{equation}\label{eq:def-theta-u}
		\theta_u(t)
		=
		\sup_{s\ge t} \arctan\!\left(
		\frac{f(s) - \sqrt{f^2(s)-4g_+(s)}}{2}
		\right),
		\qquad t\ge T,
	\end{equation}
	and
	\begin{equation}\label{eq:def-theta-l}
		\theta_l(t)
		=
		\inf_{s\ge t} \arctan\!\left(
		\frac{f(s) - \sqrt{f^2(s)+4g_-(s)}}{2}
		\right),
		\qquad t\ge T,
	\end{equation}
	where $g_+:=\max\{g,0\}$ and $g_-:=\max\{-g,0\}$.
	In particular, for any solution $\theta$ of \cref{eq:ODE:theta}, every accumulation point of the trajectory
	$(\theta(t))_{t\geq 0}$ belongs to the set $\{-\pi/2,0,\pi/2\}$.
\end{proposition}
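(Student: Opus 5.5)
The plan is a barrier (comparison) argument for the scalar ODE \cref{eq:ODE:theta}, written $\theta'=F(t,\theta)$ with
\[
F(t,\theta)=-1-(g-1)\cos^2\theta+\tfrac{f}{2}\sin(2\theta).
\]
Dividing by $\cos^2\theta$ and setting $z=\tan\theta$ gives
\[
F(t,\theta)=\cos^2\theta\,\bigl(-z^2+fz-g\bigr).
\]
This is the Riccati form: $z=u'/u$ satisfies $z'=-z^2+fz-g$. Its roots are $z_\pm=(f\pm\sqrt{f^2-4g})/2$, which are real by the hypothesis $f^2\ge 4g$. Since $g/f\to0$ and $f\to\infty$, we have $z_-\approx g/f\to0$ and $z_+\approx f\to\infty$. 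Between the roots $\theta'>0$; outside them on $(-\pi/2,\pi/2)$, $\theta'<0$. At $\theta=\pm\pi/2$ we get $\theta'=-1$, so trajectories cross the vertical direction and move downward, which matches Figure~\ref{fig:angleODE}.

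\emph{Part (a).} Let $h_u(s)=\arctan z_-^{+}(s)$, where $z_-^{+}$ is the small root computed with $g_+$ in place of $g$; note $g_+\ge g$. Then $\theta_u(t)=\sup_{s\ge t}h_u(s)$ is nonincreasing, lies in $[0,\pi/2)$, and tends to $0$ because $z_-^{+}\le 2g_+/f\to0$. Choose $T$ so that $\arctan z_+(s)>\pi/4$, say, for $s\ge T$, and $\theta_u<\pi/4$ there. For $\theta\in[\theta_u(t),\arctan z_+(t)]$ we have $\tan\theta\ge z_-^{+}\ge z_-$, hence $F\ge0$. Because $\theta_u$ is nonincreasing, the region $\{\theta\ge\theta_u(t)\}$ is forward invariant: at a touching time, $\theta'\ge0\ge$ the (Dini) derivative of $\theta_u$. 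A monotone envelope avoids any differentiability issue; I would argue via the first exit time and use that $\theta_u$ is nonincreasing and continuous.

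Next I show $\theta\to\pi/2$. While $\theta$ lies in $[\theta_u+\eta,\pi/4]$, $F$ is bounded below by a positive quantity growing like $f$, so $\theta$ reaches $\pi/4$ in finite time. Above $\pi/4$ and below $\arctan z_+$, $\theta$ increases. The curve $\arctan z_+(t)\to\pi/2$ attracts near $\pi/2$. If $\theta$ overshoots above it, $F<0$ there, so $\theta$ comes back down. Hence $\liminf\theta\ge\liminf\arctan z_+=\pi/2$. The start $\theta(t_0)=\pi/2$ is handled by noting $\theta'=-1$ there, so $\theta$ enters the region just below. A delicate point is the starting value $\theta(t_0)=\theta_u(t_0)$ exactly on the barrier, where $F$ may vanish. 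There I would use strictness coming from $\sup$ over $s\ge t$: $\theta$ cannot decrease below $\theta_u$, and it must leave a neighbourhood of $0$ because $\theta_u\to0$ while $\theta$ stays $\ge$ a value where $F>0$ for later times. I expect this step to be the main obstacle. The fix I would try first is to replace $\theta_u$ by a slightly larger strict barrier, using the room given by $g/f\to0$.

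\emph{Part (b).} This is symmetric. Use $z_-^{(-)}=(f-\sqrt{f^2+4g_-})/2\le0$, the negative root for $-g_-\le g$, and set $\theta_l=\inf_{s\ge t}\arctan z_-^{(-)}$. This is nondecreasing and tends to $0$ since $|z_-^{(-)}|\lesssim g_-/f$. For $\tan\theta\le z_-^{(-)}\le z_-$ we get $F\le0$, so the region below $\theta_l$ is forward invariant. Below the barrier the drift is negative and of size $\gtrsim f|\sin2\theta|/2$ away from $-\pi/2$, and equal to $-1$ at $-\pi/2$. Hence $\theta\to-\pi/2$. One should identify $-\pi/2$ with $\pi/2$ modulo $\pi$: crossing $-\pi/2$ downward means re-entering near $\pi/2$ from above, and the angle is only meaningful modulo $\pi$ for $u'/u$. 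I would make this precise by working with $\theta$ mod $\pi$.

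\emph{Accumulation points.} For a general trajectory, either it eventually enters $[\theta_u,\pi/2]$ or $[-\pi/2,\theta_l]$, and then (a) or (b) applies, or it stays in $(\theta_l(t),\theta_u(t))$ for all large $t$. In the last case it converges to $0$, since both barriers tend to $0$. Every accumulation point is therefore in $\{-\pi/2,0,\pi/2\}$.
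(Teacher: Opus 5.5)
Your approach is the paper's: both write $G(\vartheta,t)=-\frac{x^2-fx+g}{1+x^2}$ with $x=\tan\vartheta$, read off the sign of $G$ from the two roots, and use the $\sup$/$\inf$ envelopes of the small root as barriers. You actually give more than the paper does. Its proof only observes $G>0$ strictly between $\theta_u(t)$ and $\theta^\ast(t)$ at a fixed time and then asserts convergence. It has no invariance argument and says nothing about the crossing of $-\pi/2$, which you handle correctly mod $\pi$.

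Two places need tightening.
\begin{itemize}
\item Invariance. This is cleanest via a constant subsolution, which is exactly what the $\sup$ provides. For $t_1\ge T$ and $c:=\theta_u(t_1)$, you have $\tan c\ge z_-^{+}(s)\ge z_-(s)$ and $c<\arctan z_+(s)$ for every $s\ge t_1$, so $G(c,s)\ge 0$. Since $G$ is Lipschitz in $\vartheta$, comparison gives $\theta(t)\ge c\ge\theta_u(t)$ for all $t\ge t_1$. A touching-time argument with only $\theta'\ge 0\ge D^+\theta_u$ does not by itself exclude crossing.
\item The limit $\liminf\theta\ge\pi/2$. Knowing that $\theta$ increases below $\arctan z_+$ is not enough. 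You need the quantitative bound $G\ge c_\eta f$ on $[\pi/4,\pi/2-\eta]$ for large $s$.
\end{itemize}

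The boundary case you flag as the main obstacle splits into two cases, and in the second the statement itself can fail.
\begin{itemize}
\item If $c=\theta_u(t_0)>0$ and $\theta(t_0)=c$, the subsolution argument gives $\theta\ge c$ on $[t_0,\infty)$. Since $z_-^{+}(s)\to 0<\tan c$, the drift on $[c,\pi/4]$ is eventually $\gtrsim f$. So your sketch works here, and no enlarged barrier is needed.
\item If $\theta_u(t_0)=0$, i.e.\ $g\le 0$ on $[t_0,\infty)$, take $g\equiv 0$. All hypotheses hold and $\theta_u=\theta_l\equiv 0$. Yet $\theta\equiv 0$, corresponding to $u\equiv 1$, solves \cref{eq:ODE:theta}, so (a) and (b) would force it to converge to both $\pi/2$ and $-\pi/2$.
\end{itemize}
No argument can close the second case. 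The correct statement uses the half-open ranges $(\theta_u(t_0),\pi/2]$ and $[-\pi/2,\theta_l(t_0))$. Your argument proves that version, and it still yields the accumulation-point conclusion. The paper's proof has the same blind spot, since it only verifies $G>0$ strictly above $\theta_u$.
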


\begin{proof}
	Let $G(\vartheta,t):=-1 - (g(t)-1) \cos^2(\vartheta) + \frac{f(t)}{2} \sin(2\vartheta)$ for $t\geq 0$ and $\vartheta\in [-\pi,\pi]$.
	Note that $G(\pi/2,t) = -1$ for all $t\geq 0$.

	\smallskip

	\noindent
	\textbf{Step 1: Stationary points of $G(\cdot,t)$:}
	Since $G(\cdot,t)$ has period $\pi$ it is enough to consider $\vartheta \in [-\pi/2,\pi/2]$.
	Make the change of variables $x = \tan(\vartheta)$. Then
	\begin{equation*}
		\cos^2(\vartheta) = \frac{1}{1+x^2}, \quad \sin(2\vartheta) = \frac{2x}{1+x^2}.
	\end{equation*}
	Substituting into $G(\vartheta,t)=0$ and multiplying by $1+x^2$ gives
	\begin{equation*}
		x^2-x f(t) + g(t) = 0.
	\end{equation*}
	The roots are given by
	\begin{equation*}
		x_\pm(t)= \frac{f(t) \pm \sqrt{f^2(t)-4g(t)}}{2}.
	\end{equation*}
	Since $f(t) \to \infty$ as $t \to \infty$ and $g(t)/f(t) \to 0$ as $t \to \infty$, we have
	\begin{equation}\label{eq:xplusminus}
		\lim_{t \to \infty} x_+(t) = \infty, \quad \lim_{t \to \infty} x_-(t) = 0.
	\end{equation}
	Translating back to $\vartheta$ using $\vartheta = \arctan(x)$ we obtain the two zeros of $G(\cdot,t)$ in $[-\pi/2,\pi/2]$. Denote
	\begin{equation*}
		\theta^\ast(t):=\arctan(x_+(t)).
	\end{equation*}
	Then \cref{eq:xplusminus} gives $\theta^\ast(t)\to \pi/2$ as $t\to\infty$.
	See \cref{fig:angleODE} for an illustration of the vector field defined by $G$.

	\smallskip

	\noindent\textbf{Step 2: Barrier:}

	For $t$ large enough we have
	\begin{equation*}
		\theta_l(t)\leq \arctan(x_-(t))\leq \theta_u(t)\leq \theta^\ast(t).
	\end{equation*}
	Since
	\begin{equation*}
		G(\vartheta,t)= -\frac{x^2-f(t)x+g(t)}{1+x^2}, \qquad x=\tan(\vartheta),
	\end{equation*}
	the sign of $G(\cdot,t)$ is determined by the quadratic $x^2-f(t)x+g(t)$, it follows that for any $\theta_u(t)<\theta<\theta^\ast(t)$ we have
	\begin{equation*}
		G(\theta,t)>0,
	\end{equation*}
	so any solution $\theta$ to \cref{eq:ODE:theta} that is in the interval $[\theta_u(t),\pi/2]$ for some $t \geq T$ converges to $\pi/2$ as $t \to \infty$.

	Likewise, for any $-\pi/2 \le \theta < \theta_l(t)$ we have
	\begin{equation*}
		G(\theta,t)<0,
	\end{equation*}
	so any solution $\theta$ to \cref{eq:ODE:theta} that is in the interval $[-\pi/2,\theta_l(t)]$ for some $t \geq T$ converges to $-\pi/2$ as $t \to \infty$.

	Finally, if a solution $\theta$ has an accumulation point in $(-\pi/2,\pi/2)\setminus\{0\}$, then for $t$ large enough it stays either strictly above $\theta_u(t)$ or strictly below $\theta_l(t)$, which contradicts the two alternatives just proved. Therefore, every accumulation point belongs to $\{-\pi/2,0,\pi/2\}$.
\end{proof}

\begin{corollary}[The growth of $r$]\label{cor:growthr}
	\hfill

	\noindent
	Assume the hypotheses of \cref{lem:theta:convergence}, and in addition that
	$\nicefrac{g}{f}\in L^1([T,\infty))$ and $g\in L^\infty([T,\infty))$ for the time $T>0$
	appearing in \cref{lem:theta:convergence}.
	Let $\theta$ be a solution of \cref{eq:ODE:theta} that satisfies
	$\theta(t)\to 0$ as $t\to\infty$, and let $r$ be the corresponding radial
	component solving \cref{eq:ODE:r}. Then $r$ is bounded on $[T,\infty)$.
\end{corollary}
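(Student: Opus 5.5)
The plan is to pass from the polar pair $(r,\theta)$ to the logarithmic derivative $w:=\tan\theta=u'/u$. Since $\theta(t)\to0$, there is $T_1\geq T$ such that $\theta(t)\in(-\pi/4,\pi/4)$ for $t\geq T_1$. Hence $u$ does not vanish on $[T_1,\infty)$, $w$ is well defined and $C^1$ there, and $w(t)\to 0$. Since $X=(u,u')=r(\cos\theta,\sin\theta)$, we have
\begin{equation*}
	\log r(t)=\log|u(t)|+\tfrac12\log\bigl(1+w(t)^2\bigr),\qquad (\log|u|)'=w .
\end{equation*}
Because $w$ is bounded on $[T_1,\infty)$, boundedness of $r$ reduces to an upper bound on $\int_{T_1}^t w(s)\ds$ that is uniform in $t$. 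The interval $[T,T_1]$ is compact and $r$ is continuous, so it costs nothing. Equivalently, one can integrate \cref{eq:ODE:r} using \cref{eq:AYY}, but the $w$-formulation avoids handling the two terms of \cref{eq:AYY} separately.

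\textbf{Key step: an upper barrier for $w$.} Here I would use \cref{lem:theta:convergence}. If $\theta(t_0)\geq\theta_u(t_0)$ for some $t_0\geq T$, part (a) forces $\theta\to\pi/2$, which contradicts $\theta\to0$. Hence
\begin{equation*}
	w(t)<\tan\theta_u(t)=\sup_{s\ge t}x_-(s)\quad\text{for all } t\ge T_1,
	\qquad\text{where}\qquad
	x_-(s)=\frac{2g_+(s)}{f(s)+\sqrt{f^2(s)-4g_+(s)}}\le \frac{2g_+(s)}{f(s)} .
\end{equation*}
Note that $x_-(s)$ is evaluated with $g_+$ in place of $g$, as in \cref{eq:def-theta-u}. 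In parallel I would derive a pointwise version directly from the Riccati equation $w'=fw-g-w^2$, which is \cref{eq:ODE:theta} rewritten. At any time where $w>2g_+/f$ and $w$ is small, the right-hand side is at least $fw/2-w^2>0$ once $f$ is large. So $w$ cannot return below the level $2g_+/f$ after crossing it, and it would then grow like $\exp(\int f/2)$, contradicting $w\to0$. This gives $w(t)\leq 2g_+(t)/f(t)$ for all large $t$ without any supremum. Integrating, and using $g/f\in L^1([T,\infty))$, yields
\begin{equation*}
	\log|u(t)|\le \log|u(T_1)|+2\int_{T_1}^\infty \frac{g_+}{f}\,\ds<\infty,
\end{equation*}
so $r$ is bounded.

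\textbf{Main obstacle.} The crossing argument is where care is needed, because $g$ and $f$ are only continuous. The level $2g_+/f$ is therefore not differentiable, and a naive ``first crossing time'' comparison is unavailable. I would try first to compare instead with the level $\kappa(t)$ defined by $f\kappa-g-\kappa^2=\tfrac12 f\kappa$, which lies between $x_-$ and $x_+$. The plan is to argue on the open set $\{w>\kappa\}$, where $w'\geq \tfrac12 f w>0$. On each component of this set, $w$ is increasing and stays above $\kappa$, since $\kappa$ is small and bounded by $4g_+/f$. Thus, once entered, the set cannot be left for large $t$, and on it $w$ grows exponentially because $f\to\infty$. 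This contradicts $w\to0$, so the set is bounded in time. The hypothesis $g\in L^\infty$ enters here: it guarantees that the discriminant condition and the smallness of $\kappa$ hold uniformly for large $t$. If this pointwise route fails, the fallback is the barrier $\tan\theta_u$ from \cref{lem:theta:convergence}. That route requires the extra integrability of $\sup_{s\ge t}g_+(s)/f(s)$, which holds in the application $f=|x|^\gamma$, $g$ constant.
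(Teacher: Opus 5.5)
Your reduction is sound: $\log r=\log|u|+\tfrac12\log(1+w^2)$ with $(\log|u|)'=w$, so only an upper bound on $\int w$ is needed, hence only the upper barrier $\theta_u$. The paper instead bounds $|AY\cdot Y|$, which needs two-sided control of $\theta$. The step that fails is your key claim $w(t)\le 2g_+(t)/f(t)$ for large $t$, and its variant with the level $\kappa$. The obstruction is not differentiability of the level; it is that the level moves. On a component $(a,b)$ of $\{w>\kappa\}$ you do have $w'>0$, but $\kappa$ can rise faster than $w$, for instance when $g$ switches on. So $w(b)=\kappa(b)$ is possible and the set can be left. In fact the bound is false. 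Integrating $(we^{-F})'=-(g+w^2)e^{-F}$, with $F'=f$, over $[t,\infty)$ and using $w\to0$, $F\to\infty$ gives
\[
w(t)=\int_t^\infty e^{-(F(s)-F(t))}\bigl(g(s)+w(s)^2\bigr)\,\mathrm{d}s .
\]
Hence if $g\ge0$ vanishes on an interval and is positive somewhere later, then $w>0=2g_+/f$ on that interval. The paper's proof asserts the same kind of pointwise estimate, $|\theta(t)|\le C|g(t)|/f(t)$, ``from the explicit formula for the barriers''. It does not follow there either, because $\theta_u$ and $\theta_l$ are running suprema and infima.

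Your fallback is the correct use of \cref{lem:theta:convergence}: $w(t)<\tan\theta_u(t)=\sup_{s\ge t}x_-(s)\le\sup_{s\ge t}2g_+(s)/f(s)$. This closes the argument if $t\mapsto\sup_{s\ge t}g_+(s)/f(s)$ is integrable. That is an extra hypothesis, however, and under the stated hypotheses alone the conclusion can fail.

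For $g\ge0$, Fubini applied to the identity gives $\int_T^\infty w=\int_T^\infty (g+w^2)(s)\int_T^s e^{-(F(s)-F(t))}\,\mathrm{d}t\,\mathrm{d}s$. This is sensitive to how small $f$ was shortly before $s$. A counterexample on $[T,\infty)$ with $T$ large:
\begin{itemize}
\item Take $f(t)=\log t$, except on plateaus where $f=n$ on $[n,n+2/n]$, joined to $\log t$ by steep continuous ramps.
\item Take $g\in[0,1]$ equal to $1$ on an interval of length about $1/n$ inside the $n$-th plateau, and $0$ elsewhere.
\item Then $f^2\ge4g$, $f\to\infty$, $g/f\to0$, $g\in L^\infty$, and $\int g/f\approx\sum n^{-2}<\infty$.
\item A solution with $w\to0$, so $\theta\to0$, exists by a contraction argument for the integral equation.
\item For $t\in[n-\tfrac12,n]$ the identity gives $w(t)\gtrsim n^{-1}e^{-(n-t)\log n}$. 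Hence $\int_T^\infty w\gtrsim\sum_n (n\log n)^{-1}=\infty$, and $r\ge|u|=|u(T_1)|e^{\int_{T_1}^t w}\to\infty$.
\end{itemize}

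So the statement needs an additional assumption such as yours. It does hold where the paper uses it, namely for $\widetilde f=|x|^\gamma+\mathsf{O}(1)$ and $\widetilde g$ in \cref{cor:Rn-bounded}, where $|\widetilde g|/\widetilde f\lesssim x^{-\beta}$ with $\beta>1$. That application is not the case ``$g$ constant'': with $g=\lambda$ and $f=|x|^\gamma$, $\gamma\le1$, one has $g/f\notin L^1$.
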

\begin{proof}
	By \cref{eq:ODE:r} and \cref{eq:AYY} we have, for $t\ge T$,
	\begin{equation}\label{eq:repr-r}
		r(t)
		=
		r(T)\exp\!\left(\int_T^t A(s)Y(s)\cdot Y(s)\,\mathrm ds\right)
		=
		r(T)\exp\!\left(\int_T^t
		\bigl[\sin\theta\,\cos\theta\,(1-g)+f\,\sin^2\theta\bigr](s)\,\mathrm ds\right),
	\end{equation}
	where $Y(s)=(\cos\theta(s),\sin\theta(s))$.

	Since $\theta(t)\to0$ as $t\to\infty$, there exists $T_1\ge T$ such that
	$|\theta(t)|\le1$ for all $t\ge T_1$. On $[T_1,\infty)$ we use the standard bounds
	\begin{equation*}
		|\sin\theta|\le|\theta|,\qquad
		|\cos\theta|\le 1,\qquad
		\sin^2\theta\le \theta^2.
	\end{equation*}
	Hence, for $t\ge T_1$,
	\begin{align}
		\bigl\lvert A(t)Y(t)\cdot Y(t)\bigr\rvert
		 & =
		\bigl\lvert \sin\theta(t)\,\cos\theta(t)\,\bigl(1-g(t)\bigr)
		+ f(t)\,\sin^2\theta(t)\bigr\rvert
		\notag \\
		 & \le
		|\theta(t)|\,\bigl(1+|g(t)|\bigr)
		+ |f(t)|\,\theta^2(t).
		\label{eq:AYY-est}
	\end{align}

	By \cref{lem:theta:convergence} and its construction of the barriers $\theta_l$ and $\theta_u$,
	for $t$ large enough we have
	\begin{equation*}
		|\theta(t)| \,\le\, C\,\frac{|g(t)|}{f(t)}
	\end{equation*}
	for some constant $C>0$ independent of $t$ (this follows from the explicit
	formula for $\theta_l,\ \theta_u$ in \cref{eq:def-theta-u} and the assumptions
	$f(t)\to\infty$ and $g(t)/f(t)\to0$).
	Plugging this into \cref{eq:AYY-est} yields, for $t\ge T$, since $g \in L^\infty([T,\infty))$,
	\begin{align*}
		\bigl\lvert A(t)Y(t)\cdot Y(t)\bigr\rvert
		 & \le
		C\,\frac{|g(t)|}{f(t)}\,(1+|g(t)|)
		+ C^2\,\frac{|g(t)|^2}{f(t)} \\
		 & \le
		C'\,\frac{|g(t)|}{f(t)}
	\end{align*}
	for some constant $C'>0$ independent of $t$.
	Since $\nicefrac{g}{f} \in L^1([T,\infty))$ by assumption, we have
	\begin{equation*}
		\int_{T}^\infty \bigl\lvert A(s)Y(s)\cdot Y(s)\bigr\rvert\,\mathrm ds < \infty.
	\end{equation*}
	Therefore, the exponential in \cref{eq:repr-r} remains bounded for $t\ge T$.
\end{proof}

\begin{proposition} \label{prop:theta:convergence}
	Assume the hypotheses of \cref{lem:theta:convergence}.
	In addition, let $g$ and $f$ be as in \cref{cor:growthr}, that is,
	$\nicefrac{g}{f} \in L^1([T,\infty))$ and $g \in L^\infty([T,\infty))$.
	Let
	$u$ be a solution to \cref{eq:g:ODE} such that $u \in L^2_{\varrho}$, where $\varrho(t):= e^{-\int_0^t f(s) \ds}$, $t\geq 0$. Then it follows that $\theta(t) \to 0$ as $t \to \infty$.
\end{proposition}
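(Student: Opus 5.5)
By \cref{lem:theta:convergence}, every accumulation point of $\theta(t)$ lies in $\{-\pi/2,0,\pi/2\}$. Moreover, once $\theta$ enters the region above $\theta_u$ it converges to $\pi/2$, and once it enters the region below $\theta_l$ it converges to $-\pi/2$. The plan is therefore a trichotomy. Either $\theta$ eventually enters one of these two trapping regions and tends to $\pm\pi/2$, or it stays in $(\theta_l(t),\theta_u(t))$ for all large $t$; in the latter case $\theta(t)\to0$ because $\theta_l,\theta_u\to0$. Since $\theta$ is only defined modulo $\pi$ and $G$ is $\pi$-periodic, we may work on $[-\pi/2,\pi/2]$. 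It then suffices to rule out $\theta(t)\to\pm\pi/2$ using the hypothesis $u\in L^2_\varrho$. The two limits are symmetric, since replacing $u$ by $-u$ shifts $\theta$ by $\pi$ and the formula for $\theta'$ is $\pi$-periodic. So we only treat the case of convergence to the set $\{\theta\equiv\pi/2 \bmod \pi\}$.

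\textbf{Step 1: the radial component blows up.} Suppose $\theta(t)\to\pi/2$. Then $\sin^2\theta\to1$ and $\sin\theta\cos\theta\to0$. By \cref{eq:AYY},
\[
AY\cdot Y=f\sin^2\theta+\sin\theta\cos\theta\,(1-g)\ge \tfrac12 f(t)
\]
for $t\ge T_2$. The term $|1-g|\,|\cos\theta|$ is negligible because $g\in L^\infty$ and $f\to\infty$. Integrating \cref{eq:ODE:r} gives
\[
r(t)\ge r(T_2)\exp\Bigl(\tfrac12\int_{T_2}^t f\Bigr).
\]
Here $r(T_2)>0$, since otherwise $u\equiv0$ by uniqueness.

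\textbf{Step 2: transfer the bound to $u$.} We need a lower bound on $u$ itself, not just on $|X|=r$. Since $\theta\to\pi/2$, we have $|u'|\approx r$ while $u=r\cos\theta$ might be small. I would therefore integrate $u'$: its sign is eventually constant (that of $\sin\theta$), so
\[
|u(t)|\ge \int_{T_3}^t |u'|-|u(T_3)|\gtrsim \int_{T_3}^t e^{\frac12\int_{T_2}^s f}\,ds-C.
\]
A simpler alternative is to use the equation directly: near $\theta=\pi/2$ we have $\theta'\approx-1$. Hence a trajectory sitting exactly at $\pi/2$ in fact moves through it; convergence to $\pi/2$ means that $\tan\theta=u'/u\to+\infty$. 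This gives $u'/u\ge M$ eventually for every $M$, and then $|u|$ grows at least like $e^{Mt}$. A sharper version uses $u'/u=\tan\theta$ together with $\tan\theta\ge x_+(t)\sim f(t)$, which I expect to follow from the barrier $\theta^\ast$ (the region between $\theta_u$ and $\theta^\ast$ is repelling toward $\theta^\ast$, and above $\theta^\ast$ the solution is pushed down). Then $|u(t)|\gtrsim \exp\bigl((1-o(1))\int^t f\bigr)$.

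\textbf{Step 3: contradiction with $L^2_\varrho$.} From Step 2,
\[
u^2\varrho\gtrsim \exp\Bigl(2(1-o(1))\int^t f-\int_0^t f\Bigr)\to\infty,
\]
so $u\notin L^2_\varrho$, a contradiction. The case $-\pi/2$ is identical by the symmetry noted above. Hence $\theta$ must remain in the band between the barriers, and $\theta\to0$.

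The main obstacle is Step 2: obtaining growth of $|u|$ at rate at least $\tfrac12\int f$ (plus margin), strong enough to beat the weight $e^{-\int f}$. The crude bound $r\ge e^{\frac12\int f}$ from Step 1 only gives $u^2\varrho\gtrsim 1$ up to subexponential factors, which is borderline and does not by itself contradict integrability. I would therefore rely on showing $\theta\ge\theta^\ast(t)-o(1)$, so that $u'/u\ge(1-o(1))f$. This gives the growth $\exp\bigl((1-o(1))\int f\bigr)$, which beats the weight decisively.
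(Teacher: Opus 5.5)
There is a genuine gap, at exactly the point you flagged. The step you finally rely on is the claim $\theta\ge\theta^\ast(t)-o(1)$, hence $u'/u\ge(1-o(1))f$. You only say you expect it, and as stated it would not give what you need anyway.

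Since $\pi/2-\theta^\ast(t)=\arctan(1/x_+(t))\sim 1/f(t)$, an additive $o(1)$ loss in the angle allows $\tan\theta$ to be much smaller than $f$. For example, $\pi/2-\theta\sim f^{-1/2}$ satisfies $\theta\ge\theta^\ast-o(1)$ but gives only $\tan\theta\sim f^{1/2}$. What you would actually need is the multiplicative bound $\cot\theta\le(1+o(1))/x_+(t)$, i.e.\ that $\theta$ tracks the moving equilibrium $\theta^\ast(t)$. That requires quantitative control on how fast $x_+(t)$ varies, and the hypotheses (mere continuity of $f$) do not supply it. Your ``simpler alternative'' $u'/u\ge M$ gives only $|u|\gtrsim e^{Mt}$, which loses against $e^{\frac12\int_0^t f}$ because $f\to\infty$. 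So, as written, Step~3 is not reached.

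The missing observation is that the loss happens in Step~1, not Step~2. In \cref{eq:AYY} you have $\sin^2\theta\to1$, while $|\sin\theta\cos\theta\,(1-g)|\le 1+\|g\|_{L^\infty}$ stays bounded and $f\to\infty$. Hence for any fixed $\eta\in(0,\tfrac12)$,
\[
AY\cdot Y\ge(1-\eta)f \quad\text{for all large } t,
\]
and the paper takes $1-\eta=\tfrac78$. This gives $r(t)\ge r(T)\exp\bigl((1-\eta)\int_T^t f\bigr)$, not merely exponent $\tfrac12$.

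Feed this into your first version of Step~2: once $\sin\theta\ge\tfrac12$ you have $u'=r\sin\theta\ge\tfrac12 r$, so
\[
u(t)-u(T)\ge\tfrac12 r(T)\int_T^t e^{(1-\eta)\int_T^s f}\,ds .
\]
Because $1-\eta>\tfrac12$, this beats the weight $\sqrt{\varrho}=e^{-\frac12\int_0^t f}$ and contradicts $u\in L^2_\varrho$. This is precisely the paper's proof. No information about where $\theta$ sits relative to $\theta^\ast$ is needed beyond $\theta\to\pi/2$.
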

\begin{proof}
	By~\cref{lem:theta:convergence} we have that $\theta(t) \to \theta_*$ as $t \to \infty$, where $\theta_*\in \{-\pi/2,0,\pi/2\}$.
	The proof is done by a contradiction argument.
	Assume that $\theta_*=\pi/2$. Then it follows that  $\pi/2-\theta(t) = \kappa(t)$ for some $\kappa(t) > 0$ and $t > T$, and we have
	\begin{align*}
		A(t) Y(t) \cdot Y(t)
		 & =
		\begin{bmatrix}
			0     & 1    \\
			-g(t) & f(t)
		\end{bmatrix}
		\begin{bmatrix}
			\sin(\kappa(t)) \\
			\cos(\kappa(t))
		\end{bmatrix}
		\cdot
		\begin{bmatrix}
			\sin(\kappa(t)) \\
			\cos(\kappa(t))
		\end{bmatrix}
		\\
		 & =
		\begin{bmatrix}
			\cos(\kappa(t)) \\
			-g(t) \sin(\kappa(t)) + f(t) \cos(\kappa(t))
		\end{bmatrix}
		\cdot
		\begin{bmatrix}
			\sin(\kappa(t)) \\
			\cos(\kappa(t))
		\end{bmatrix}
		\\
		 & =
		\cos(\kappa(t))\sin(\kappa(t)) - g(t) \sin(\kappa(t)) \cos(\kappa(t)) + f(t) \cos^2(\kappa(t)) \\
		 &
		\geq
		(1-c(\kappa(t))) f(t),
	\end{align*}
	where $Y(t) = (\cos(\theta(t)), \sin(\theta(t)))^{T}$ and  $c(\kappa(t)) \to 0$ as $t \to \infty$.  Assume that $T$ is so large that $c(\kappa(t)) < 1/8$ for $t \geq T$. Since $r$ solves \cref{eq:ODE:r},  for $t > T$
	\begin{align*}
		r(t) \geq r(T) \exp \left ( \int_{T}^t \frac{7}{8} f(s) \ud s \right )\quad \textrm{ and }\quad \sin(\theta(t))\geq\frac{1}{2}.
	\end{align*}
	Recall the notation of \cref{lem:ODE}, that is,  $u(t)= r(t) \cos(\theta(t))$ and $u'(t)= r(t)\sin(\theta(t))$. Thus, our above analysis means that for $t > T$ we have the estimate
	\begin{align*}
		u'(t) = r(t) \sin(\theta(t)) \geq \frac{1}{2} r(T) \exp \left ( \int_{T}^t \frac{7}{8}  f(s) \ud s \right ).
	\end{align*}
	Integrating over $t>T$ we get the following lower bound
	\begin{align*}
		u(t) - u(T) \geq \frac{1}{2} r(T) \int_{T}^t \exp \left ( \int_{T}^s \frac{7}{8}  f(\tau) \ud \tau \right ).
	\end{align*}
	For $u$ to be in $L^2_\varrho$ we need at least $\sqrt{\varrho(t)} u(t) \to 0$ as $t \to \infty$, specifically, we need
	\begin{align*}
		e^{-\frac{1}{2}\int_T^t f(\tau) \ud \tau} \int_{T}^t \exp \left ( \int_{T}^s \frac{7}{8}  f(q) dq \right )\ud s  \to 0,\quad \textrm{ as }\quad t\to \infty,
	\end{align*}
	which is a contradiction since $f(t) \to \infty$ as $t \to \infty$. The case when $\theta(t) \to -\pi/2$ is similar.

	In conclusion, from \cref{lem:theta:convergence} we have that $\theta(t) \to 0$ as $t \to \infty$.
\end{proof}

\begin{corollary}\label{cor:Rn-bounded}
	Under the hypotheses of \cref{lem:WKBprime}, assume in addition that $f(x)=|x|^{\gamma}$ with $\gamma>0$, $\lambda>0$, $v \in L^2([x^\ast,\infty))$, and choose any integer $n>\frac{1+\gamma}{2\gamma}$. Then the remainder $R_n$ in the WKB expansion satisfies
	\begin{equation*}
		\sup_{x\ge x^*} \bigl|R_n(x)\bigr| < \infty.
	\end{equation*}
	Consequently, for $x\ge x^*$ one has
	\begin{equation*}
		v(x) \approx \exp\Bigl(\sum_{k=0}^{n} S_k(x)\Bigr),
	\end{equation*}
	with implied constants depending only on $\gamma,\lambda,n$ and $x^*$.
\end{corollary}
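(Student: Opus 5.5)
The strategy is to reduce the boundedness of $R_n$ to the barrier machinery of \cref{lem:ODE,lem:theta:convergence,cor:growthr,prop:theta:convergence}, applied to the Sturm--Liouville equation \cref{eq:SLL} for $\widehat u=e^{R_n}$. Writing $F(x):=2\sum_{j=0}^n S_j'(x)$ and $G(x):=-\bigl(S_n''+\sum_{n_1+n_2\ge n+1}^{n_1,n_2\le n}S_{n_1}'S_{n_2}'\bigr)$, equation \cref{eq:SLL} reads $-\widehat u''-F\widehat u'=-G\widehat u$. Since $S_0'=-f/2$ dominates, $F=-f+2\lambda/f+\mathsf O(|x|^{-3\gamma})$, so $F\to-\infty$. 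I would therefore work with the reflected orientation, or equivalently rewrite the equation so that the drift coefficient is $\widetilde f:=-F\approx f\to\infty$. The equation becomes $-\widehat u''+\widetilde f\,\widehat u'=\widetilde g\,\widehat u$ with $\widetilde g:=-G$. I may need to check the sign conventions carefully here, possibly reversing the roles of growing and decaying solutions.

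The next step is to verify the hypotheses of \cref{lem:theta:convergence,cor:growthr}. By \cref{lem:WKB:decay}, $|\widetilde g|\lesssim|x|^{-2n\gamma}$. This gives $\widetilde g\in L^\infty$ and $\widetilde g/\widetilde f\to0$. The integrability $\widetilde g/\widetilde f\in L^1([x^*,\infty))$ is exactly \cref{eq:SSs}, and it holds because $n>\frac{1+\gamma}{2\gamma}$ gives $(2n+1)\gamma>1$. Enlarging $x^*$ if needed gives $\widetilde f^2\ge4\widetilde g$, and this costs only a compact piece on which everything is bounded by continuity.

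For the angular limit, I need $\theta\to0$ and must rule out $\pm\pi/2$. Applying \cref{prop:theta:convergence} requires $\widehat u\in L^2$ with weight $e^{-\int\widetilde f}$. Since $\widehat u=v\,e^{-\sum S_k}$ and $e^{-\int\widetilde f}\approx e^{2S_0}\times(\text{subexponential})$, this is the condition $\int v^2 e^{-2\sum_{k\ge1}S_k}e^{2S_0-2S_0}$. The bookkeeping here will need care, and this is where I expect the main obstacle. The weight in \cref{prop:theta:convergence} is tailored to a drift of the form $+f\widehat u'$. Here the genuine information is only $v\in L^2(\mathrm dx)$, i.e.\ that $v$ is the decaying WKB branch $e^{S_0+\cdots}$ rather than the growing one $e^{-S_0+\cdots}$.

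If the direct reduction fails, I would redo the contradiction argument of \cref{prop:theta:convergence} by hand. If $\theta\to\pm\pi/2$, then \cref{eq:ODE:r} forces $r$, hence $|\widehat u'|$ and $|\widehat u|$, to grow like $\exp(\tfrac78\int\widetilde f)\approx e^{-\frac74 S_0}$. Then $|v|=|\widehat u|e^{\sum S_k}\gtrsim e^{-\frac34S_0}\to\infty$, which contradicts $v\in L^2$ together with the monotone growth.

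Once $\theta\to0$ is established, \cref{cor:growthr} gives $r$ bounded, so $\widehat u$ is bounded above. For the lower bound, I would use the fact that $\theta\to0$ forces $\cos\theta\ge\tfrac12$ eventually, so $\widehat u=r\cos\theta\ge r/2$. The same integrability estimate in \cref{eq:repr-r} bounds $\int|AY\cdot Y|$, so $r(t)\ge r(T)e^{-C}>0$. This gives $\widehat u\approx1$, i.e.\ $\sup|R_n|<\infty$, and hence $v\approx e^{\sum_{k=0}^nS_k}$ with constants depending only on $\gamma,\lambda,n,x^*$.
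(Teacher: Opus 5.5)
Your route is the paper's. You rewrite \cref{eq:SLL} as $-\widehat u''+\widetilde f\,\widehat u'=\widetilde g\,\widehat u$ with $\widetilde f=-2\sum_{j=0}^n S_j'$ and $\widetilde g=S_n''+\sum S_{n_1}'S_{n_2}'$, check the hypotheses via \cref{lem:WKB:decay}, get $\theta\to0$ from \cref{prop:theta:convergence}, and finish with two-sided bounds on $r$ from \cref{eq:repr-r} together with $\cos\theta\ge\frac12$.

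The obstacle you anticipate does not exist, and your weight bookkeeping is wrong. Since $S_j(x^*)=0$ for every $j$, one has exactly $e^{-\int_{x^*}^x\widetilde f}=e^{2\sum_{j=0}^n S_j(x)}$. This is not $e^{2S_0}$ times a negligible factor, because $e^{S_1}$ is unbounded for $\gamma\le 1$. Hence $\varrho\,\widehat u^{\,2}=v^2$ identically, so $\widehat u\in L^2_\varrho$ is literally the hypothesis $v\in L^2$. That is how the paper applies \cref{prop:theta:convergence} directly. Your fallback argument is this same identity $|v|=\sqrt{\varrho}\,|\widehat u|$ inserted into that proposition's contradiction argument.
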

\begin{proof}
	From \cref{lem:WKBprime} we have that $\widehat u:=\exp(R_n)$ solves \cref{ec:STODE} with drift coefficient
	\begin{equation*}
		\widetilde f(x) = -2\sum_{j=0}^{n} S_j'(x) = |x|^{\gamma} + \mathsf{O}(1) \quad \text{as } x \to \infty,
	\end{equation*}
	and forcing term
	\begin{equation*}
		\widetilde g(x) = S_n''(x) + \sum_{n_1+n_2 \geq n+1}^{n_1 \leq n; n_2 \leq n} S_{n_1}'(x)S_{n_2}'(x).
	\end{equation*}
	Since $n>\frac{1+\gamma}{2\gamma}$ implies $n>\frac{1-\gamma}{2\gamma}$, \cref{lem:WKB:decay} yields $\widetilde g/\widetilde f \in L^1([x^*,\infty))$. The same lemma shows that each term in $\widetilde g$ is bounded on $[x^*,\infty)$, hence $\widetilde g\in L^\infty([x^*,\infty))$. Moreover, $\widetilde f(x)\to\infty$ and $\widetilde f(x)^2\geq 4|\widetilde g(x)|$ for $x$ large enough.

	To apply \cref{prop:theta:convergence}, define
	\begin{equation*}
		\varrho(x):=\exp\left(-\int_{x^*}^x \widetilde f(s)\,\mathrm ds\right).
	\end{equation*}
	Since $\widetilde f=-2\sum_{j=0}^n S_j'$, we have
	\begin{equation*}
		\varrho(x)=C\exp\left(2\sum_{j=0}^n S_j(x)\right)
	\end{equation*}
	for some constant $C>0$, and therefore
	\begin{equation*}
		\varrho(x)\widehat u(x)^2
		=
		C\exp\left(2\sum_{j=0}^n S_j(x)+2R_n(x)\right)
		=
		C v(x)^2.
	\end{equation*}
	Because $v\in L^2([x^*,\infty))$, it follows that $\widehat u\in L^2_{\varrho}(x^*,\infty)$.

	Hence, \cref{prop:theta:convergence} and \cref{cor:growthr} apply after enlarging $x^*$ if necessary. In particular, $\theta(x)\to 0$ and the proof of \cref{cor:growthr} shows that
	\begin{equation*}
		\int_{x^*}^{\infty} \bigl\lvert A(s)Y(s)\cdot Y(s)\bigr\rvert\,\mathrm ds < \infty.
	\end{equation*}
	Using the representation formula for $r$ from \cref{eq:repr-r}, we obtain constants $0<c<C<\infty$ such that
	\begin{equation*}
		c\leq r(x)\leq C \qquad \text{for all } x\geq x^*.
	\end{equation*}
	Since $\theta(x)\to 0$, after increasing $x^*$ once more if necessary we also have $\cos\theta(x)\geq \frac12$ for all $x\geq x^*$. Therefore,
	\begin{equation*}
		\frac{c}{2}\leq \widehat u(x)=r(x)\cos\theta(x)\leq C
		\qquad \text{for all } x\geq x^*,
	\end{equation*}
	so $R_n(x)=\log \widehat u(x)$ is bounded on $[x^*,\infty)$. The boundedness on the remaining compact interval is immediate.
\end{proof}

\begin{proof}[Proof of \cref{thm:main:3prime}]
	Set
	\begin{equation*}
		v(x):=e^{-\frac{1}{2}V(x)}u(x).
	\end{equation*}
	Then $v\in L^2([x^\ast,\infty))$ and, by the reduction carried out before \cref{lem:WKBprime}, $v$ solves \cref{eq:Wittenprime} on $[x^\ast,\infty)$. Hence, \cref{cor:Rn-bounded} gives that $R_n$ is bounded on $[x^\ast,\infty)$, and consequently
	\begin{equation*}
		v(x)\approx \exp\Bigl(\sum_{j=0}^n S_j(x)\Bigr), \qquad x\ge x^\ast.
	\end{equation*}
	Since $S_0(x)=-\frac{1}{2}V(x)$ by \cref{ec:S0S1Sk}, multiplying by $e^{V(x)/2}$ yields
	\begin{equation*}
		u(x)=e^{V(x)/2}v(x)\approx \exp\Bigl(\sum_{j=1}^n S_j(x)\Bigr), \qquad x\ge x^\ast,
	\end{equation*}
	which proves the first claim.

	For the explicit asymptotics, we isolate in each $S_k$ its leading-order contribution and use \cref{lem:WKB:decay} to control the rest. The case $k=1$ is explicit:
	\begin{equation*}
		S_1'(x)=\lambda |x|^{-\gamma},
	\end{equation*}
	and therefore
	\begin{equation*}
		S_1(x)=\lambda \frac{|x|^{1-\gamma}}{1-\gamma}+\mathsf{O}(1)
	\end{equation*}
	for $\gamma\neq 1$, while for $\gamma=1$ one gets a logarithmic term. If $\gamma>1$, then $S_1$ is bounded and the recurrence \cref{ec:S0S1Sk} shows inductively that $S_k'$ is integrable at infinity for every $k\geq 2$, hence all $S_k$ with $k\geq 2$ are bounded as well. In that case the stated asymptotic follows after absorbing those bounded terms into the comparison constants. It therefore remains to consider $0<\gamma\leq 1$. For $k\ge2$, write the recurrence \cref{ec:S0S1Sk} as
	\begin{equation*}
		S_k'(x)=\frac{1}{|x|^\gamma}S_{k-1}''(x)+\frac{1}{|x|^\gamma}\sum_{j=1}^{k-1}S_j'(x)S_{k-j}'(x).
	\end{equation*}
	Using the induction hypothesis for $S_1',\dots,S_{k-1}'$, the product of the leading terms contributes a multiple of $|x|^{-(2k-1)\gamma}$. Every remaining term contains either a derivative or at least one lower-order remainder, and by \cref{lem:WKB:decay} all such contributions are integrable at infinity after subtracting that leading term (see \cref{sec:menor} for explicit calculations). Hence, there exists a constant $C_k\in\R$ such that
	\begin{equation*}
		S_k'(x)=C_k |x|^{-(2k-1)\gamma}+r_k(x),
	\end{equation*}
	where $r_k\in L^1([x^\ast,\infty))$ if $1-(2k-1)\gamma\neq 0$, while in the resonant case $1-(2k-1)\gamma=0$ we have $|r_k(x)|\lesssim x^{-1-\eta}$ for some $\eta>0$. Integrating from $x^\ast$ to $x$ yields
	\begin{equation*}
		S_k(x)=
		\begin{cases}
			C_k |x|^{1-(2k-1)\gamma}+\mathsf{O}(1), & 1-(2k-1)\gamma\neq 0,
			\\
			C_k \log|x|+\mathsf{O}(1),              & 1-(2k-1)\gamma=0.
		\end{cases}
	\end{equation*}
	If $\gamma^{-1}\notin 2\mathbb{N}-1$, then summing these leading terms for $k=1,\dots,n$ gives the first explicit formula in the statement. If instead $\gamma = \frac{1}{2m-1}$ for some $m\geq 2$, then the logarithmic contribution occurs at the index $k=m$, while the terms with $k\geq m+1$ are bounded and can be absorbed into the comparison constants. This gives the second explicit formula.
\end{proof}

\begin{proof}[Proof of \cref{cor:asymptoticsprime}]
	Define the rescaled function
	\begin{equation*}
		u(y):=v\bigl(\varepsilon^{\frac{1}{1+\gamma}}y\bigr), \qquad y\in\R.
	\end{equation*}
	By \cref{lem:g:scaling}, $u\in L^2_{\varrho_{1,\infty}}(\R)$ and it solves
	\begin{equation*}
		-u''+V'u'=\lambda u \qquad \text{on } \R.
	\end{equation*}
	Since $\frac{1}{3}<\gamma<1$, we may take $n=2$ in \cref{thm:main:3prime}. The remark following that theorem gives
	\begin{equation*}
		S_1(y)=\lambda\frac{|y|^{1-\gamma}}{1-\gamma}+C_1,
		\qquad
		S_2(y)=\frac{\lambda}{2} |y|^{-2\gamma}+\frac{\lambda^2}{1-3\gamma}|y|^{1-3\gamma}+C_2,
	\end{equation*}
	so $S_2$ is bounded on $[x^\ast,\infty)$; the boundedness of the remainder $R_2$ follows from \cref{cor:Rn-bounded}. Therefore,
	\begin{equation*}
		u(y)\approx e^{S_1(y)}=\exp\Bigl(\lambda\frac{|y|^{1-\gamma}}{1-\gamma}\Bigr), \qquad y\ge x^\ast.
	\end{equation*}
	Substituting $y=x\varepsilon^{-\frac{1}{1+\gamma}}$ gives
	\begin{equation*}
		v(x)\approx \exp\Bigl(\lambda\frac{|x\varepsilon^{-\frac{1}{1+\gamma}}|^{1-\gamma}}{1-\gamma}\Bigr), \qquad x\in [x^\ast\varepsilon^{\frac{1}{1+\gamma}},\infty),
	\end{equation*}
	as claimed.
\end{proof}

\appendix

\section{Spectral representation of the \texorpdfstring{$\chi^2$}{chi2}--distance to equilibrium}\label{sec:srdist}
In this section,  for completeness we provide the classical spectral representation of the $\chi^2$--distance and hence for the Galerkin projection distance \cref{eq:Ga}.

Let $\varrho_{\varepsilon,\infty}(x) = C_\varepsilon e^{-V(x)/\varepsilon}$, $x\in \mathbb{R}$, be the stationary density of \cref{eq:FP}. Then we can define the
$\chi^2$-discrepancy between a density $\varrho$ and the equilibrium density $\varrho_{\varepsilon,\infty}$ as
\begin{align}\label{eq:Sepdist}
	d_\varepsilon (\varrho):= \left ( \int_{\mathbb{R}} |\varrho(x) - \varrho_{\varepsilon,\infty}(x)|^2 \varrho^{-1}_{\varepsilon,\infty}(x) \dx\right )^{1/2}.
\end{align}
Here, we denote $d_\varepsilon(\varrho)=\infty$ if the integral does not converge, and we see that $d_\varepsilon (\varrho)\in [0,\infty]$. This distance is natural with respect to the spectrum of the generator \cref{eq:FP} as we see in the following elementary lemma.

\begin{lemma}[Eigen-expansion of $d_\varepsilon(\varrho)$]\label{lem:separation}\hfill

	\noindent
	Let $\varepsilon>0$ be fixed.
	Let $\psi_{k,\varepsilon}$, $k=0,1,\ldots$ be an orthonormal basis of $L^2_{\varrho_{\varepsilon,\infty}}$ with $\psi_{0,\varepsilon}=1$. Then for a density $\varrho \in L^2_{\varrho^{-1}_{\varepsilon,\infty}}$, the $\chi^2$-discrepancy \cref{eq:Sepdist} satisfies
	\begin{equation*}
		(d_\varepsilon(\varrho))^2 = \sum_{k=1}^\infty \left ( \int \varrho(x) \psi_{k,\varepsilon}(x) \dx \right )^2.
	\end{equation*}
\end{lemma}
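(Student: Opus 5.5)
The identity follows from Parseval's identity in $L^2_{\varrho_{\varepsilon,\infty}}$ applied to the likelihood ratio $h:=\varrho/\varrho_{\varepsilon,\infty}$. First I check that $h\in L^2_{\varrho_{\varepsilon,\infty}}$:
\begin{equation*}
\int h^2\varrho_{\varepsilon,\infty}\dx=\int \varrho^2\varrho_{\varepsilon,\infty}^{-1}\dx<\infty,
\end{equation*}
because $\varrho\in L^2_{\varrho^{-1}_{\varepsilon,\infty}}$. The constant function $1$ also lies in this space, since $\varrho_{\varepsilon,\infty}$ is a probability density. Hence $h-1\in L^2_{\varrho_{\varepsilon,\infty}}$. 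Rewriting the definition \cref{eq:Sepdist} gives
\begin{equation*}
(d_\varepsilon(\varrho))^2=\int |h-1|^2\varrho_{\varepsilon,\infty}\dx=\|h-1\|^2_{L^2_{\varrho_{\varepsilon,\infty}}}.
\end{equation*}

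Next I compute the Fourier coefficients of $h-1$ in the orthonormal basis $\{\psi_{k,\varepsilon}\}_{k\geq 0}$. For $k\geq 1$,
\begin{equation*}
\langle h,\psi_{k,\varepsilon}\rangle_{\varrho_{\varepsilon,\infty}}=\int \frac{\varrho}{\varrho_{\varepsilon,\infty}}\psi_{k,\varepsilon}\varrho_{\varepsilon,\infty}\dx=\int\varrho\,\psi_{k,\varepsilon}\dx.
\end{equation*}
This integral converges absolutely by Cauchy--Schwarz, since $\varrho\varrho_{\varepsilon,\infty}^{-1/2}\in L^2(\dx)$ and $\psi_{k,\varepsilon}\varrho_{\varepsilon,\infty}^{1/2}\in L^2(\dx)$. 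Orthonormality gives $\langle 1,\psi_{k,\varepsilon}\rangle_{\varrho_{\varepsilon,\infty}}=\langle\psi_{0,\varepsilon},\psi_{k,\varepsilon}\rangle_{\varrho_{\varepsilon,\infty}}=0$ for $k\geq1$, so the $k$-th coefficient of $h-1$ is $\int\varrho\,\psi_{k,\varepsilon}\dx$.

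For $k=0$, the coefficient is
\begin{equation*}
\langle h-1,1\rangle_{\varrho_{\varepsilon,\infty}}=\int\varrho\dx-\int\varrho_{\varepsilon,\infty}\dx=1-1=0,
\end{equation*}
because both $\varrho$ and $\varrho_{\varepsilon,\infty}$ are probability densities.

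Finally, since $\{\psi_{k,\varepsilon}\}_{k\geq 0}$ is an orthonormal basis, and in particular complete, Parseval's identity gives
\begin{equation*}
\|h-1\|^2_{L^2_{\varrho_{\varepsilon,\infty}}}=\sum_{k\geq0}\langle h-1,\psi_{k,\varepsilon}\rangle^2_{\varrho_{\varepsilon,\infty}}.
\end{equation*}
Inserting the coefficients computed above yields the claimed formula.

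The one step that needs the hypotheses is the use of completeness. It is exactly the assumption that $\{\psi_{k,\varepsilon}\}$ is an orthonormal basis, which the spectral theory recalled around \cref{ec:eignsys} provides. Everything else is bookkeeping.
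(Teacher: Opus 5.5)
Your proof is correct and follows the paper's own argument. Both expand $(\varrho-\varrho_{\varepsilon,\infty})/\varrho_{\varepsilon,\infty}$ in the orthonormal basis, identify the coefficients as $\int\varrho\,\psi_{k,\varepsilon}\dx$ for $k\geq 1$ with vanishing zeroth coefficient, and apply Parseval; you additionally make explicit why $h-1\in L^2_{\varrho_{\varepsilon,\infty}}$, why the coefficient integrals converge absolutely, and why the zeroth coefficient is zero, which the paper leaves implicit.
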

\begin{proof}
	We begin by rewriting
	\begin{align*}
		(d_\varepsilon(\varrho))^2 = \int \left |\frac{\varrho(x) - \varrho_{\varepsilon,\infty}(x)}{\varrho_{\varepsilon,\infty}(x)} \right |^2 \varrho_{\varepsilon,\infty}(x) \dx.
	\end{align*}
	Then expand the function $u = \frac{\varrho - \varrho_{\varepsilon,\infty}}{\varrho_{\varepsilon,\infty}}$ using our basis, $u = \sum_{k=0}^\infty \langle u, \psi_{k,\varepsilon} \rangle_{\varrho_{\varepsilon,\infty}} \psi_{k,\varepsilon}$.
	The coefficients are given by for $k > 0$
	\begin{align*}
		\langle u, \psi_{k,\varepsilon} \rangle_{\varrho_{\varepsilon,\infty}} = \int (\varrho(x)-\varrho_{\varepsilon,\infty}(x)) \psi_{k,\varepsilon}(x) \dx  = \int \varrho(x) \psi_{k,\varepsilon}(x) \dx,
	\end{align*}
	and the first coefficient is $0$.
	Finally, computing the
	$\chi^2$-distance gives
	\begin{align*}
		(d_\varepsilon(\varrho))^2
		 & =
		\int \left | \sum_{k=1}^\infty \langle u, \psi_{k,\varepsilon} \rangle_{\varrho_{\varepsilon,\infty}} \psi_{k,\varepsilon}(x) \right |^2 \varrho_{\varepsilon,\infty}(x) \dx
		=
		\sum_{k=1}^\infty \langle u, \psi_{k,\varepsilon} \rangle_{\varrho_{\varepsilon,\infty}}^2
		\\
		 & =
		\sum_{k=1}^\infty \left ( \int \varrho(x) \psi_{k,\varepsilon}(x) \dx \right )^2.
	\end{align*}
\end{proof}

\begin{corollary}[Eigen-expansion of $\chi^2$-distance to equilibrium]\label{cor:separation}\hfill

	\noindent
	Assume that the operator \cref{eq:FP} has a discrete spectrum and as such there is a sequence of eigenvalues $0:=\lambda_{0,\varepsilon}<\lambda_{1,\varepsilon}<\ldots$ and a sequence of orthonormal eigenfunctions $\psi_{k,\varepsilon}$ spanning $L^2_{\varrho_{\varepsilon,\infty}}$.
	Let $\varrho$ be a density which solves the Fokker--Planck \cref{eq:OU:fokker} with initial data $\varrho_0 \in L^2_{\varrho_{\varepsilon,\infty}^{-1}}$, then
	\begin{align*}
		(d_\varepsilon(t))^2 := (d_\varepsilon(\varrho(\cdot,t)))^2 = \sum_{k=1}^\infty e^{- 2 \lambda_{k,\varepsilon} t} \left ( \int \varrho_0(x) \psi_{k,\varepsilon}(x) \dx \right )^2,\quad t\geq 0.
	\end{align*}
\end{corollary}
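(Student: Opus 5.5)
The plan is to combine the semigroup expansion with \cref{lem:separation}. Since the Fokker--Planck evolution is reversible with respect to $\varrho_{\varepsilon,\infty}$, the ratio $h_t:=\varrho(\cdot,t)/\varrho_{\varepsilon,\infty}$ solves the backward equation $\partial_t h = \mathcal{L}_\varepsilon h$. Indeed, $\mathcal{L}^\ast_\varepsilon(\varrho_{\varepsilon,\infty} h)=\varrho_{\varepsilon,\infty}\mathcal{L}_\varepsilon h$, which follows from the divergence form $\mathcal{L}^\ast_\varepsilon\varrho=\varepsilon(\varrho_{\varepsilon,\infty}(\varrho/\varrho_{\varepsilon,\infty})')'$. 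The first step is to verify this identity directly from \cref{eq:OU:fokker} and \cref{ec:invm}, using $\varrho_{\varepsilon,\infty}'=-\varrho_{\varepsilon,\infty}V'/\varepsilon$.

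The assumption $\varrho_0\in L^2_{\varrho^{-1}_{\varepsilon,\infty}}$ is equivalent to $h_0\in L^2_{\varrho_{\varepsilon,\infty}}$. In that space $-\mathcal{L}_\varepsilon$ is self-adjoint and nonnegative, with the orthonormal eigenbasis $\{\psi_{k,\varepsilon}\}$. By the spectral theorem, the (unique $L^2$) solution is
\[
h_t=e^{t\mathcal{L}_\varepsilon}h_0=\sum_{k\ge0}e^{-\lambda_{k,\varepsilon}t}\langle h_0,\psi_{k,\varepsilon}\rangle_{\varrho_{\varepsilon,\infty}}\psi_{k,\varepsilon},
\]
with convergence in $L^2_{\varrho_{\varepsilon,\infty}}$. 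The coefficients are computed by cancelling the weight:
\[
\langle h_0,\psi_{k,\varepsilon}\rangle_{\varrho_{\varepsilon,\infty}}=\int\varrho_0\psi_{k,\varepsilon}\,\dx .
\]
For $k=0$ this equals $1$, since $\varrho_0$ is a probability density. Hence $h_t-1=\sum_{k\ge1}e^{-\lambda_{k,\varepsilon}t}\bigl(\int\varrho_0\psi_{k,\varepsilon}\,\dx\bigr)\psi_{k,\varepsilon}$.

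Finally, I would apply \cref{lem:separation} to the density $\varrho(\cdot,t)$, which lies in $L^2_{\varrho^{-1}_{\varepsilon,\infty}}$ because $h_t\in L^2_{\varrho_{\varepsilon,\infty}}$. Its $k$-th coefficient is
\[
\int\varrho(\cdot,t)\psi_{k,\varepsilon}\,\dx=\langle h_t,\psi_{k,\varepsilon}\rangle_{\varrho_{\varepsilon,\infty}}=e^{-\lambda_{k,\varepsilon}t}\int\varrho_0\psi_{k,\varepsilon}\,\dx .
\]
Parseval's identity then gives the claimed sum.

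The main obstacle is the identification step: one must justify that the PDE solution $\varrho(\cdot,t)$ coincides with the $L^2$ semigroup solution $\varrho_{\varepsilon,\infty}e^{t\mathcal{L}_\varepsilon}h_0$. I would handle this at the level of coefficients. Set $a_k(t):=\int\varrho(\cdot,t)\psi_{k,\varepsilon}\,\dx$, differentiate in $t$, and integrate by parts twice. The boundary terms vanish thanks to the decay of $\varrho_{\varepsilon,\infty}$ against the eigenfunction growth from \cref{thm:main:3prime}. This yields $a_k'=\int\varrho\,\mathcal{L}_\varepsilon\psi_{k,\varepsilon}=-\lambda_{k,\varepsilon}a_k$, hence $a_k(t)=e^{-\lambda_{k,\varepsilon}t}a_k(0)$. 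This route avoids any uniqueness theory for the PDE.
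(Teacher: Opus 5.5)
Your argument is correct and is essentially the paper's proof: the paper likewise passes to $u=(\varrho-\varrho_{\varepsilon,\infty})/\varrho_{\varepsilon,\infty}$ (your $h_t-1$), notes that it solves $\partial_t u=\mathcal{L}_\varepsilon u$, expands it in the eigenbasis with coefficients $e^{-\lambda_{k,\varepsilon}t}\int\varrho_0\psi_{k,\varepsilon}\,\dx$, and concludes with \cref{lem:separation}. The paper does not address your identification step at all. Be aware that the boundary term there equals $\varepsilon\varrho_{\varepsilon,\infty}\bigl(\psi_{k,\varepsilon}\,\partial_x h_t-h_t\,\psi_{k,\varepsilon}'\bigr)$ at $\pm R$, so killing it needs a priori control of the solution itself, not just decay of $\varrho_{\varepsilon,\infty}$ (e.g.\ $h_t,\partial_x h_t\in L^2_{\varrho_{\varepsilon,\infty}}$, which makes this term integrable in $R$ so that it vanishes along some $R_j\to\infty$); so you are still fixing a solution class, exactly as the paper tacitly does.
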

\begin{proof}
	The function $\varrho$ satisfies $\partial_t \varrho = \mathcal{L}^\ast_\varepsilon \varrho$, while $u=\frac{\varrho-\varrho_{\varepsilon,\infty}}{\varrho_{\varepsilon,\infty}}$ solves
	$\partial_t u = \mathcal{L}_\varepsilon u$. As such, expanding $u$ using the eigen-basis we see that the solution $u$ can be represented as
	\begin{align*}
		u = \sum_{k=0}^\infty e^{-\lambda_{k,\varepsilon} t} \langle u_0, \psi_{k,\varepsilon} \rangle_{\varrho_{\varepsilon,\infty}}\psi_{k,\varepsilon}, \quad \textrm{ where }\quad u_0:=\frac{\varrho_0-\varrho_{\varepsilon,\infty}}{\varrho_{\varepsilon,\infty}}.
	\end{align*}
	By \cref{lem:separation} the proof is completed.
\end{proof}

\section{The general case \texorpdfstring{$0<\gamma< 1$}{0 < gamma < 1}}\label{sec:menor}

In this section, we provide the main steps in comparison with the case $\gamma\in (1/3,1)$ for the proof of \cref{thm:main}.
We begin the appendix by stating the detailed structure of the functions $S_n'(x)$ defined in \cref{ec:S0S1Sk} of \cref{lem:WKBprime} for all $0<\gamma< 1$.
\begin{lemma}[General structure of $S_n'$]
	\label{lem:general-shape}
	\hfill

	\noindent
	Let $0 < \gamma < 1$, let $f(x)=|x|^\gamma$, let $g(x)=\lambda>0$, and let $(S_n)_{n\ge1}$ be defined by the recurrence in \cref{ec:S0S1Sk} of \cref{lem:WKBprime}. Then, for each $n\ge1$, there exist polynomials $A_{n,i}(\gamma)$, $1\le i\le n$, such that for $x\ge x^\ast$,
	\begin{equation} \label{eq:Sk:formula:explicit}
		S_n'(x) = \sum_{i=1}^{n} \lambda^i \, A_{n,i}(\gamma)\,|x|^{-((n+i-1)\gamma + (n-i))},
	\end{equation}
	and each $A_{n,i}(\gamma)$ has degree $n-i$ and sign $(-1)^{n-i}$. The leading coefficient is given by
	\begin{equation*}
		A_{n,n} = \frac{1}{n}\binom{2(n-1)}{n-1}.
	\end{equation*}
	Consequently, if $\psi_{k,\varepsilon}$ is the $k$-th non-constant eigenfunction of $-\mathcal{L}_\varepsilon$ with eigenvalue $\lambda_{k,\varepsilon} = \lambda_k \varepsilon^{\frac{\gamma-1}{1+\gamma}}$, and if $N := \big\lceil \frac{1+\gamma}{2\gamma} \big\rceil$, then
	\begin{equation}\label{eq:suma}
		\psi_{k,\varepsilon}(x) \approx \exp \biggl ( \sum_{i=1}^N \int_{x^\ast}^{x \varepsilon^{\frac{-1}{1+\gamma}}} S_i'(s )  \ds\biggr ),
	\end{equation}
	where $S_i'$ is given by \cref{eq:Sk:formula:explicit}. In the resonant case $N = \frac{1+\gamma}{2\gamma}\in \mathbb N$, the last term $i=N$ contributes a logarithmic correction.
\end{lemma}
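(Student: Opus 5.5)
The plan is to prove the structural formula \cref{eq:Sk:formula:explicit} by strong induction on $n$, working directly with the recurrence \cref{ec:S0S1Sk} with $f(x)=x^\gamma$ on $x\ge x^\ast>0$. Then I will deduce \cref{eq:suma} from \cref{cor:Rn-bounded} and \cref{lem:g:scaling}. To make "sign $(-1)^{n-i}$" precise, I will prove the stronger statement that $(-1)^{n-i}A_{n,i}(\gamma)$ is a nonzero polynomial of exact degree $n-i$ with nonnegative coefficients, hence positive for $\gamma>0$. The base case is $S_1'=\lambda x^{-\gamma}$, i.e.\ $A_{1,1}=1$ with exponent $-\gamma$, which matches the formula for $n=1$, $i=1$.

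For the inductive step I will treat the two parts of the recurrence separately.

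\emph{Derivative term $x^{-\gamma}S_{n-1}''$.} Take the summand $\lambda^i A_{n-1,i}x^{-((n+i-2)\gamma+(n-1-i))}$ of $S_{n-1}'$ with $i\le n-1$. Differentiating multiplies it by $-\bigl((n+i-2)\gamma+(n-1-i)\bigr)$ and lowers the exponent by one; dividing by $x^\gamma$ then gives exponent $-((n+i-1)\gamma+(n-i))$, exactly as required. The linear factor has nonnegative coefficients and is nonzero, since for $i=n-1$ it equals $(2n-3)\gamma$. So this step raises the degree by one to $n-i$ and flips the sign to $(-1)^{n-i}$.

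\emph{Product term $x^{-\gamma}S_j'S_{n-j}'$.} Multiplying the summand with index $a$ of $S_j'$ by the summand with index $b$ of $S_{n-j}'$ gives $\lambda^{a+b}$ times
\[
x^{-((j+a-1)\gamma+(j-a))-((n-j+b-1)\gamma+(n-j-b))-\gamma}=x^{-((n+i-1)\gamma+(n-i))},\qquad i:=a+b .
\]
The coefficient $A_{j,a}A_{n-j,b}$ has degree $(j-a)+(n-j-b)=n-i$ and sign $(-1)^{n-i}$.

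All contributions to a fixed $i$ therefore carry the same sign and nonnegative-coefficient structure, so there is no cancellation. Each sum is nonzero, its degree is exactly $n-i$, and the leading coefficients add positively. For $i=n$ only the product terms with $a=j$, $b=n-j$ contribute, which gives
\[
A_{n,n}=\sum_{j=1}^{n-1}A_{j,j}A_{n-j,n-j}.
\]
This is the Catalan recursion with $A_{1,1}=1$, so $A_{n,n}=C_{n-1}=\frac1n\binom{2(n-1)}{n-1}$.

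For \cref{eq:suma} I will apply \cref{cor:Rn-bounded} (equivalently the first claim of \cref{thm:main:3prime}) with $n=N+1>\frac{1+\gamma}{2\gamma}$, and then rescale via \cref{lem:g:scaling}. This gives $\psi_{k,\varepsilon}(x)\approx\exp\bigl(\sum_{i=1}^{N+1}\int_{x^\ast}^{x\varepsilon^{-1/(1+\gamma)}}S_i'\bigr)$; here I use $|\psi_k|$, or fix the sign so that $\psi_k(x^\ast)\ge0$. It then remains to drop $S_{N+1}$. By the formula just proved, the slowest-decaying exponent in $S_{N+1}'$ is $-(2N+1)\gamma$, and $1-(2N+1)\gamma<0$ because $N\ge\frac{1+\gamma}{2\gamma}>\frac{1-\gamma}{2\gamma}$. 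All other exponents decay even faster, so $S_{N+1}'\in L^1$, $S_{N+1}$ is bounded, and it can be absorbed into the comparison constants.

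In the resonant case $N=\frac{1+\gamma}{2\gamma}$ we have $(2N-1)\gamma=1$, so the $i=N$ term of $S_N'$ is $C_{N-1}\lambda^N x^{-1}$, which integrates to a logarithm. Every other term of $S_N'$, and every term of the lower $S_i'$ that is not integrable, is a pure power and integrates explicitly.

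I expect the main obstacle to be the sign and degree bookkeeping: "sign $(-1)^{n-i}$" has to be formulated so that it survives summation, which is why I work with the nonnegative-coefficient formulation. A secondary point is to make sure the constants in \cref{cor:Rn-bounded} are uniform in $\varepsilon$, which holds because everything is computed at $\varepsilon=1$ and only rescaled afterwards.
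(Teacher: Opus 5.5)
Your proposal is correct and follows the paper's proof. It uses the same strong induction on $n$, splitting the recurrence into the derivative term $x^{-\gamma}S_{n-1}''$ and the quadratic term, with the same exponent bookkeeping via $i=a+b$, the same Catalan recursion for $A_{n,n}$, and \cref{eq:suma} obtained from the bounded WKB remainder plus rescaling. Your two refinements make precise what the paper leaves implicit. Requiring $(-1)^{n-i}A_{n,i}$ to have nonnegative coefficients rules out cancellation and gives exact degree $n-i$, where the paper only records ``degree at most $n-i$''. Taking $n=N+1$ and discarding the bounded $S_{N+1}$ covers the resonant case, where $n=N$ violates the hypothesis $n>\frac{1+\gamma}{2\gamma}$ of \cref{thm:main:3prime} and the paper only says ``follows similarly''.
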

\begin{proof}
	The proof follows by complete induction on $n$. For $n=1$, we have $S_1'(x)=\lambda |x|^{-\gamma}$ and $A_{1,1}=1$. Assume that \cref{eq:Sk:formula:explicit} holds for all indices strictly less than $n$.
	First consider the linear term in \cref{ec:S0S1Sk}. If a monomial in $S_{n-1}'$ has the form
	\begin{equation*}
		\lambda^i A_{n-1,i}(\gamma) |x|^{-((n+i-2)\gamma +(n-1-i))},
	\end{equation*}
	then differentiating once and dividing by $|x|^\gamma$ gives
	\begin{equation*}
		-\bigl((n+i-2)\gamma +(n-1-i)\bigr)\lambda^i A_{n-1,i}(\gamma)
		|x|^{-((n+i-1)\gamma +(n-i))},
	\end{equation*}
	which preserves the claimed exponent pattern. Moreover, the coefficient is again a polynomial in $\gamma$, now of degree at most $(n-1-i)+1=n-i$, and its sign is $(-1)^{n-i}$ because the derivative contributes one additional minus sign.
	It remains to analyze the quadratic term on the right-hand side of \cref{ec:S0S1Sk}, namely
	\begin{equation*}
		\frac{1}{|x|^\gamma}\sum_{j=1}^{n-1} S_j'(x)\,S_{n-j}'(x).
	\end{equation*}
	Expanding each $S_j'$ by the induction hypothesis, a typical product of a term indexed by $a$ in $S_j'$ and a term indexed by $b$ in $S_{n-j}'$ is proportional to
	\begin{equation*}
		\lambda^{a+b}A_{j,a}(\gamma)A_{n-j,b}(\gamma)
		|x|^{-((j+a-1)\gamma +(j-a)) - (((n-j)+b-1)\gamma +(n-j-b)) -\gamma}
		.
	\end{equation*}
	After simplification, this becomes
	\begin{equation*}
		\lambda^i
		|x|^{-((n+i-1)\gamma+(n-i))},
	\end{equation*}
	where $i:=a+b$. Since $A_{j,a}(\gamma)$ and $A_{n-j,b}(\gamma)$ have degrees $j-a$ and $(n-j)-b$ respectively, the product has degree $(j-a)+((n-j)-b)=n-i$. Likewise, its sign is
	\begin{equation*}
		(-1)^{j-a}(-1)^{(n-j)-b}=(-1)^{n-i}.
	\end{equation*}
	Hence, every contribution to the coefficient of $\lambda^i$ has the same sign and degree at most $n-i$. This yields \cref{eq:Sk:formula:explicit} for $n$, with coefficients $A_{n,i}(\gamma)$ that are polynomials in $\gamma$ of degree $n-i$ and sign $(-1)^{n-i}$.
	This completes the induction.

	Moreover, the top-order coefficient (corresponding to $i=n$) satisfies the recursion
	\begin{equation*}
		A_{n,n}=\sum_{j=1}^{n-1}A_{j,j}\,A_{n-j,n-j},\qquad A_{1,1}=1,
	\end{equation*}
	whose solution is given by the Catalan numbers
	\begin{equation}\label{eq:Akk_closed}
		A_{n,n}=\frac{1}{n}\binom{2(n-1)}{n-1}.
	\end{equation}
	The proof of \cref{eq:suma} follows similarly as the proof of \cref{thm:main:3prime}.
\end{proof}

In the sequel, we introduce an \textbf{ordering convention} that will be convenient to collect the growth of the eigenfunctions.
Whenever $\frac{1}{2n+1} < \gamma < \frac{1}{2n-1}$ for some $n \in \mathbb{N}$, then
\begin{equation*}
	n+1=N:=\lceil \tfrac{1+\gamma}{2\gamma} \rceil > \tfrac{1+\gamma}{2\gamma}.
\end{equation*}
We can then list the terms in strictly increasing order of the exponents in \cref{eq:Sk:formula:explicit}, $E_{j,i}=(j+i-1)\gamma+(j-i)$, by grouping pairs $(j,i)$ according to the diagonal $d:=j-i$ (from $d=0$ up to $d=n$), and within each diagonal by increasing $j$ (from $j=d+1$ to $j=n+1$). This yields a total order with no ties, and we can write
\begin{equation}\label{eq:indexz}
	\begin{aligned}
		\sum_{i=1}^{n+1} S_i'(x)
		 & =
		\sum_{j=1}^{n+1}\sum_{i=1}^{j}
		\lambda^{\,i}\,A_{j,i}(\gamma)\,
		|x|^{-((j+i-1)\gamma+(j-i))}
		\\
		 & =
		\sum_{d=0}^{n}\sum_{j=d+1}^{n+1}
		\lambda^{\,j-d}\,
		A_{j,j-d}(\gamma)\,
		|x|^{-((2j-d-1)\gamma+d)}.
	\end{aligned}
\end{equation}
Within each $d$-block, the numbers $(2j-d-1)\gamma+d$ increase with $j$ and we have
\begin{equation*}
	\begin{aligned}
		E_{\min}(d) & := (d+1)\gamma + d,                             \\
		E_{\max}(d) & := (2n+1 - d)\gamma + d,\qquad d = 0,1,\dots,n,
	\end{aligned}
\end{equation*}
from which we get
\begin{equation*}
	E_{\min}(d) > E_{\max}(d-1)\quad  \textrm{ for all } \quad d=1,\dots,n.
\end{equation*}
Hence, the blocks are ordered by increasing $d$.
The leading order block is $d=0$ with exponents of the form
\begin{equation*}
	-(2j-1)\gamma, \quad j=1,\dots,n+1,
\end{equation*}
which is precisely the sequence of exponents in \cref{thm:main:3prime} after integration.
Consequently, \cref{eq:indexz} implies that the eigenfunction $\psi_{k,\varepsilon}$ can be expressed as
\begin{equation*}
	\psi_{k,\varepsilon}(x)
	\approx
	\exp\biggl(\sum_{d=0}^{n}\sum_{j=d+1}^{n+1}
	\lambda_k^{\,j-d}\,A_{j,j-d}(\gamma)\,
	\frac{|x \varepsilon^{\frac{-1}{1+\gamma}}|^{1-((2j-d-1)\gamma+d)}}{1-((2j-d-1)\gamma+d)}
	\biggr),
\end{equation*}
where from now on we denote
\begin{equation*}
	\hat A_{j,j-d}(\gamma) := \frac{A_{j,j-d}(\gamma)}{1-((2j-d-1)\gamma+d)}.
\end{equation*}

Using the above notation, we can now state the generalization of \cref{lem:pointwise_intro,lem:time_intro} for the case $0<\gamma<1$.
\begin{lemma}[Growth of the Fourier coefficients for $0<\gamma<1$]\label{lem:pointwise_smallgamma}
	\hfill

	\noindent
	Let $\gamma\in(0,1)$, let $\varrho_{\varepsilon,0}$ be the initial density from \cref{eq:initialcond} with $\delta(\varepsilon)=\varepsilon^{\frac{1-\gamma}{1+\gamma}}$, let $\psi_{k,\varepsilon}$ be the $k$-th non-constant eigenfunction of $-\mathcal{L}_\varepsilon$ with eigenvalue $\lambda_{k,\varepsilon}=\lambda_k\varepsilon^{\frac{\gamma-1}{1+\gamma}}$, and let $N$ be as in \cref{lem:general-shape}.
	With the notation introduced after \cref{lem:general-shape}, define for $0\leq d\leq N-1$ and $d+1\leq j\leq N$
	\begin{equation}\label{eq:alpha_def}
		\alpha_{j,d}:=(2j-d-1)\gamma+d.
	\end{equation}
	Assume first that $x_0\neq 0$. In the non-resonant case, that is when $N>\frac{1+\gamma}{2\gamma}$, for all sufficiently small $\varepsilon>0$ one has
	\begin{equation*}
		\int \varrho_{\varepsilon,0}(x)\psi_{k,\varepsilon}(x)\,\dx
		\approx
		\exp\Bigg(\sum_{d=0}^{N-1}\sum_{j=d+1}^{N}
		\lambda_k^{\,j-d}\,\hat A_{j,j-d}(\gamma)\,
		\varepsilon^{\frac{\alpha_{j,d}-1}{1+\gamma}}
		|x_0|^{\,1-\alpha_{j,d}}
		\Bigg),
	\end{equation*}
	where the implied comparability constants are independent of $\varepsilon$. In the resonant case $N = \frac{1+\gamma}{2\gamma}$, the same asymptotic holds except that the term corresponding to $(j,d)=(N,0)$ is replaced by 
	\[
		\lambda_k^N A_{N,N}(\gamma)\left(\log|x_0| - \frac{1}{1+\gamma}\log\varepsilon\right).
	\]

	If $x_0=0$ and $\psi_k(0)=0$, then the Fourier coefficient vanishes. If $x_0=0$ and $\psi_k(0)\neq0$, then in the non-resonant case $N>\frac{1+\gamma}{2\gamma}$ one has
	\begin{equation}\label{eq:center-even}
		\left\lvert \int \varrho_{\varepsilon,0}(x)\psi_{k,\varepsilon}(x)\,\dx\right\rvert
		\approx
		\varepsilon^{\frac{\gamma(1-\gamma)}{1+\gamma}}
		\exp\!\Bigg(
		\sum_{d=0}^{N-1}\sum_{j=d+1}^{N}
		\lambda_k^{\,j-d}\,\hat A_{j,j-d}(\gamma)\,
		\varepsilon^{\frac{\gamma(\alpha_{j,d}-1)}{1+\gamma}}
		\Bigg).
	\end{equation}
	In the resonant case $N=\frac{1+\gamma}{2\gamma}$, the same asymptotic holds except that the term corresponding to $(j,d)=(N,0)$ is replaced by $-\lambda_k^{N}A_{N,N}(\gamma)\,\frac{\gamma}{1+\gamma}\log\varepsilon$.
\end{lemma}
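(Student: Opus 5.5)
The plan is to follow the proof of \cref{lem:pointwise_intro} with the two-term WKB expansion replaced by the full expansion \cref{eq:suma} of \cref{lem:general-shape}.

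\textbf{Step 1 (eigenfunction growth).} For $y\ge x^\ast_k$, \cref{eq:suma} together with \cref{cor:Rn-bounded} gives
\[
\psi_k(y)\approx\exp\Bigl(\sum_{d,j}\lambda_k^{j-d}\hat A_{j,j-d}(\gamma)\,y^{1-\alpha_{j,d}}\Bigr).
\]
Here the indices run over the blocks of \cref{eq:indexz}, and each primitive is taken from $x^\ast$. The lower-limit constants and the bounded terms with $\alpha_{j,d}>1$ are absorbed into the comparability constants. In the resonant case $\alpha_{N,0}=1$, the $(N,0)$ term is $\lambda_k^NA_{N,N}\log y$ instead. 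Substituting $y=x\varepsilon^{-1/(1+\gamma)}$ produces the factors $\varepsilon^{(\alpha_{j,d}-1)/(1+\gamma)}|x|^{1-\alpha_{j,d}}$. In the resonant case it produces $\lambda_k^NA_{N,N}\bigl(\log|x|-\tfrac1{1+\gamma}\log\varepsilon\bigr)$.

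\textbf{Step 2 ($x_0\neq0$).} Since $\delta\to0$, the ball $B_\delta(x_0)$ lies in the region where Step 1 applies. The key point is that the exponent $\Phi_\varepsilon(x)$ varies by $\mathsf O(1)$ over the ball.

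Its derivative is dominated by the $(1,0)$ term, giving
\[
|\Phi_\varepsilon'(x)|\lesssim \varepsilon^{(\gamma-1)/(1+\gamma)}|x_0|^{-\gamma}.
\]
Every other term is smaller near $x_0$ by positive powers of $\varepsilon$. Multiplying by $\delta=\varepsilon^{(1-\gamma)/(1+\gamma)}$ gives an oscillation of $\mathsf O(1)$ on the ball.

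Hence $\fint_{B_\delta(x_0)}\psi_{k,\varepsilon}\approx\psi_{k,\varepsilon}(x_0)$. The sign of $\psi_{k,\varepsilon}$ is constant on the ball, because there are no zeros outside $[-x_k^\ast,x_k^\ast]$. Negative $x_0$ is handled by the same parity argument used in \cref{lem:pointwise_intro}. This yields the stated formula with $x=x_0$.

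\textbf{Step 3 ($x_0=0$).} Set $\hat\delta=\delta\varepsilon^{-1/(1+\gamma)}=\varepsilon^{-\gamma/(1+\gamma)}$. The parity argument of \cref{lem:pointwise_intro} applies verbatim, since it uses only simplicity of eigenvalues and the evenness of $V$. It shows the coefficient vanishes when $\psi_k(0)=0$, and that in the even case the two tails add. The compact part $\hat\delta^{-1}\int_{-x_k^\ast}^{x_k^\ast}\psi_k$ is $\mathsf O(\varepsilon^{\gamma/(1+\gamma)})$, which is negligible.

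For the tail, Laplace-type asymptotics for $\int_{x^\ast}^{\hat\delta}e^{\Phi(y)}\,dy$ apply since $\Phi'(y)\sim\lambda_k y^{-\gamma}$ is positive, decreasing, and slowly varying. This gives
\[
\int_{x^\ast}^{\hat\delta}e^{\Phi}\approx \hat\delta^{\gamma}e^{\Phi(\hat\delta)}.
\]
To justify it, integrate by parts, or compare with $\int (e^{\Phi})'/\Phi'$ using $(1/\Phi')'\to0$. Dividing by $\hat\delta$ gives the prefactor $\hat\delta^{\gamma-1}=\varepsilon^{\gamma(1-\gamma)/(1+\gamma)}$. Evaluating $\Phi(\hat\delta)$ gives the factors $\varepsilon^{\gamma(\alpha_{j,d}-1)/(1+\gamma)}$, and in the resonant case the log term $-\lambda_k^NA_{N,N}\tfrac{\gamma}{1+\gamma}\log\varepsilon$.

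\textbf{Main obstacle.} The hardest step is Step 1: showing that the truncation at $N$ leaves only a bounded error. This requires the $L^1$ conditions of \cref{lem:WKB:decay} and \cref{cor:Rn-bounded} for the full hierarchy, including the lower blocks $d\ge1$ whose exponents may exceed $1$. Those terms are bounded and absorbed, and I would check this first via the block ordering $E_{\min}(d)>E_{\max}(d-1)$.
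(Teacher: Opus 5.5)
Your proposal is correct and follows the route the paper indicates: the paper only says the proof is analogous to that of \cref{lem:pointwise_intro}, with \cref{lem:general-shape} and \cref{eq:indexz} in place of \cref{cor:asymptoticsprime}, and you carry out exactly that (full WKB growth, bounded oscillation on $B_\delta(x_0)$, and for $x_0=0$ the parity split plus a tail estimate at $\hat\delta=\varepsilon^{-\gamma/(1+\gamma)}$). Your explicit check that $\delta\,\sup_{B_\delta(x_0)}|\Phi_\varepsilon'|=\mathsf O(1)$ and your integration-by-parts proof of $\int_{x^\ast}^{\hat\delta}e^{\Phi}\approx\hat\delta^{\gamma}e^{\Phi(\hat\delta)}$ fill in the steps the paper leaves to the reader; note only that in the resonant case \cref{cor:Rn-bounded} must be applied with $n=N+1$, after which the bounded $S_{N+1}$ is absorbed into the constants.
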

The proof is analogous to the proof of \cref{lem:pointwise_intro}, using \cref{lem:general-shape,eq:indexz} in place of \cref{cor:asymptoticsprime}, and is left to the reader.
\begin{lemma}[Mixing times asymptotics for $0 < \gamma < 1$]\label{lem:time_smallgamma}
	\hfill

	\noindent
	Let $0 < \gamma < 1$, let $k\in\mathbb N$, let $\psi_{k,\varepsilon}$ be the $k$-th non-constant eigenfunction of $-\mathcal{L}_\varepsilon$ with eigenvalue $\lambda_{k,\varepsilon}=\lambda_k\varepsilon^{\frac{\gamma-1}{1+\gamma}}$, let $N$ be as in \cref{lem:general-shape}, and let $\alpha_{j,d}$ be as in \cref{eq:alpha_def}.
	If $x_0 \neq 0$, define
	\begin{equation*}
		S_{k}(x_0,\varepsilon)
		:=
		\sum_{d=0}^{N-1}\sum_{j=d+1}^{N}
		\lambda_k^{\,j-d}\, \hat A_{j,j-d}(\gamma)\,
		\varepsilon^{\frac{\alpha_{j,d}-1}{1+\gamma}}
		|x_0|^{\,1-\alpha_{j,d}},
	\end{equation*}
	where, in the resonant case $N=\frac{1+\gamma}{2\gamma}$, the term corresponding to $(j,d)=(N,0)$ is replaced by $\lambda_k^N A_{N,N}(\gamma)\left(\log|x_0| - \frac{1}{1+\gamma}\log\varepsilon\right)$.
	Let $t_\varepsilon$ be the (unique) time satisfying
	\begin{equation}\label{eq:teps_def}
		e^{-2\lambda_k \varepsilon^{\frac{\gamma-1}{1+\gamma}} t_\varepsilon}
		\Big(\!\int \varrho_{\varepsilon,0}(x)\psi_{k,\varepsilon}(x)\dx\!\Big)^2
		=1.
	\end{equation}
	Then for all sufficiently small $\varepsilon>0$ there exists a constant $C=C(\gamma,k)\geq 1$ such that
	\begin{equation}\label{eq:teps_bounds}
		\frac{\varepsilon^{\frac{1-\gamma}{1+\gamma}}}{\lambda_k}\bigl(S_{k}(x_0,\varepsilon)-\log C\bigr)
		\le
		t_\varepsilon
		\le
		\frac{\varepsilon^{\frac{1-\gamma}{1+\gamma}}}{\lambda_k}\bigl(S_{k}(x_0,\varepsilon)+\log C\bigr).
	\end{equation}

	If $x_0=0$ and $\psi_k(0)\neq 0$, define

	\begin{equation*}
		S_k^{\mathrm{cen}}(\varepsilon)
		:=
		\frac{\gamma(1-\gamma)}{1+\gamma}\log\varepsilon
		+
		\sum_{d=0}^{N-1}\sum_{j=d+1}^{N}
		\lambda_k^{\,j-d}\, \hat A_{j,j-d}(\gamma)\,
		\varepsilon^{\frac{\gamma(\alpha_{j,d}-1)}{1+\gamma}},
	\end{equation*}
	where, in the resonant case $N=\frac{1+\gamma}{2\gamma}$, the term corresponding to $(j,d)=(N,0)$ is replaced by $-\lambda_k^{N}A_{N,N}(\gamma)\,\frac{\gamma}{1+\gamma}\log\varepsilon$.
	Let $t_\varepsilon$ be the (unique) time satisfying \cref{eq:teps_def}. Then for all sufficiently small $\varepsilon>0$ there exists a constant $C=C(\gamma,k)\geq 1$ such that
	\begin{equation}\label{eq:teps_bounds_center}
		\frac{\varepsilon^{\frac{1-\gamma}{1+\gamma}}}{\lambda_k}\bigl(S_k^{\mathrm{cen}}(\varepsilon)-\log C\bigr)
		\le
		t_\varepsilon
		\le
		\frac{\varepsilon^{\frac{1-\gamma}{1+\gamma}}}{\lambda_k}\bigl(S_k^{\mathrm{cen}}(\varepsilon)+\log C\bigr).
	\end{equation}
\end{lemma}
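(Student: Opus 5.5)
The plan is to mimic the proof of \cref{lem:time_intro}, replacing \cref{lem:pointwise_intro} by \cref{lem:pointwise_smallgamma}. The starting point is the defining relation \cref{eq:teps_def}. Since $t\mapsto e^{-2\lambda_{k,\varepsilon}t}c_{k,\varepsilon}^2$, with $c_{k,\varepsilon}:=\int\varrho_{\varepsilon,0}\psi_{k,\varepsilon}\dx$, is strictly decreasing, it has a unique zero crossing of $1$ provided $c_{k,\varepsilon}^2>1$. This last condition holds for small $\varepsilon$ by the lower bounds below, because the exponent $S_k(x_0,\varepsilon)$, respectively $S_k^{\mathrm{cen}}(\varepsilon)$, diverges to $+\infty$. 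Solving the relation explicitly gives
\begin{equation*}
	t_\varepsilon=\frac{\varepsilon^{\frac{1-\gamma}{1+\gamma}}}{\lambda_k}\log|c_{k,\varepsilon}|,
\end{equation*}
so everything reduces to two-sided bounds on $\log|c_{k,\varepsilon}|$.

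For $x_0\neq0$, \cref{lem:pointwise_smallgamma} provides a constant $C=C(\gamma,k)\geq1$, independent of $\varepsilon$, such that
\begin{equation*}
	C^{-1}e^{S_k(x_0,\varepsilon)}\leq|c_{k,\varepsilon}|\leq Ce^{S_k(x_0,\varepsilon)}
\end{equation*}
for all sufficiently small $\varepsilon$. I will check that the exponent appearing in that lemma, namely $\varepsilon^{(\alpha_{j,d}-1)/(1+\gamma)}|x_0|^{1-\alpha_{j,d}}$, coincides term by term with the definition of $S_k(x_0,\varepsilon)$, including the resonant replacement of the $(N,0)$ term. Taking logarithms gives
\begin{equation*}
	S_k(x_0,\varepsilon)-\log C\leq\log|c_{k,\varepsilon}|\leq S_k(x_0,\varepsilon)+\log C,
\end{equation*}
and multiplying by $\varepsilon^{\frac{1-\gamma}{1+\gamma}}/\lambda_k>0$ yields \cref{eq:teps_bounds}. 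The centred case $x_0=0$ with $\psi_k(0)\neq0$ is identical. Using \cref{eq:center-even}, the prefactor $\varepsilon^{\gamma(1-\gamma)/(1+\gamma)}$ contributes $\frac{\gamma(1-\gamma)}{1+\gamma}\log\varepsilon$ to the logarithm, which is exactly the first summand of $S_k^{\mathrm{cen}}$; again the resonant term is matched. This gives \cref{eq:teps_bounds_center}.

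The main obstacle is not this algebra but the input, \cref{lem:pointwise_smallgamma}, whose proof is left to the reader. To make the present argument self-contained I would verify two points.

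First, the comparability constants in \cref{lem:general-shape} are uniform in $\varepsilon$. This holds because they arise from the bounded remainder $R_N$ of \cref{cor:Rn-bounded}, which depends only on $\gamma$, $\lambda_k$ and $N$.

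Second, averaging over $B_\delta(x_0)$ with $\delta=\varepsilon^{\frac{1-\gamma}{1+\gamma}}$ changes the exponent only by $\mathsf{O}(1)$. For the leading $d=0$ block, the derivative of $\varepsilon^{(\alpha-1)/(1+\gamma)}|x|^{1-\alpha}$ is of order $\varepsilon^{(\alpha-1)/(1+\gamma)}$. Multiplied by $\delta$, this gives $\varepsilon^{(\alpha-\gamma)/(1+\gamma)}$, which is bounded since $\alpha\geq\gamma$. The terms with $d\geq1$ are even smaller.

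For the centred case I would repeat the tail-integral computation from \cref{lem:pointwise_intro} at the endpoint $\widehat\delta=\varepsilon^{-\gamma/(1+\gamma)}$. The Laplace-type factor $\widehat\delta^{\gamma-1}$ supplies the power of $\varepsilon$, and the lower-order terms in the exponent are handled as above.
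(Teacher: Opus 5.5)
Your proposal is correct and is essentially the paper's argument. Solving \cref{eq:teps_def} as $t_\varepsilon=\frac{\varepsilon^{\frac{1-\gamma}{1+\gamma}}}{\lambda_k}\log|c_{k,\varepsilon}|$ and inserting the two-sided bounds of \cref{lem:pointwise_smallgamma}, with the prefactor $\varepsilon^{\frac{\gamma(1-\gamma)}{1+\gamma}}$ producing the $\log\varepsilon$ summand of $S_k^{\mathrm{cen}}$ in the centred case, is exactly the ``take logarithms as in \cref{lem:time_intro}'' proof the paper leaves to the reader.

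Your supplementary checks on the input lemma are sound: the $\mathsf{O}(1)$ oscillation $\varepsilon^{\frac{\alpha-\gamma}{1+\gamma}}$ of the exponent over $B_\delta(x_0)$, and the Laplace factor $\widehat\delta^{\gamma-1}$ in the centred case. Note, though, that they show the comparability constant also depends on $x_0$, through the oscillation $\lambda_k|x_0|^{-\gamma}$ of the leading term; this applies equally to the paper's statement.
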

The proof follows from \cref{lem:pointwise_smallgamma} by taking logarithms in the defining relation \cref{eq:teps_def}, exactly as in the proof of \cref{lem:time_intro}, and is left to the reader.

Before stating the main result for $0<\gamma<1/3$, we analyze the limit of $t_\varepsilon$ as $\varepsilon\to0$.
The exponents of $\varepsilon$ appearing in the expansion of $\varepsilon^{\frac{1-\gamma}{1+\gamma}} S_k(x_0,\varepsilon)$ is
\begin{equation*}
	\tfrac{1-\gamma}{1+\gamma}+\tfrac{\alpha_{j,d}-1}{1+\gamma} = \frac{\alpha_{j,d}-\gamma}{1+\gamma}
	= \frac{(2j-d-2)\gamma + d}{1+\gamma},
\end{equation*}
which is zero when $j=1$ and $d=0$. Otherwise,
\begin{equation*}
	(2j-d-2)\gamma + d \geq (2(d+1)-d-2)\gamma + d = d(1+\gamma) \geq 0,
\end{equation*}
which is strictly positive for all other pairs $(j,d)$. As such we have $\lim_{\varepsilon\to0} t_\varepsilon = \frac{|x_0|^{1-\gamma}}{1-\gamma}$. Which in particular implies that $\lim_{\varepsilon\to0} \lambda_{1,\varepsilon} t_\varepsilon = \infty$ for all $0<\gamma<1$.

If instead $x_0=0$ and $\psi_k(0)\neq 0$, we use \cref{eq:teps_bounds_center}. The exponents of $\varepsilon$ appearing in the expansion of $\varepsilon^{\frac{1-\gamma}{1+\gamma}}S_k^{\mathrm{cen}}(\varepsilon)$ are
\begin{equation*}
	\frac{1-\gamma}{1+\gamma}+\frac{\gamma(\alpha_{j,d}-1)}{1+\gamma}
	=
	\frac{(1-\gamma)^2 + \gamma(\alpha_{j,d}-\gamma)}{1+\gamma}.
\end{equation*}
Since $\alpha_{j,d}\geq \gamma$ with equality only for $(j,d)=(1,0)$, the smallest exponent is $\frac{(1-\gamma)^2}{1+\gamma}$, coming from the term $(j,d)=(1,0)$, where $\hat A_{1,1}(\gamma)=\frac{1}{1-\gamma}$. Consequently,
\begin{equation*}
	t_\varepsilon \approx \frac{1}{1-\gamma}\,\varepsilon^{\frac{(1-\gamma)^2}{1+\gamma}},
\end{equation*}
and in particular
\begin{equation*}
	\lim_{\varepsilon\to0} t_\varepsilon = 0,
	\qquad
	\lim_{\varepsilon\to0} \lambda_{k,\varepsilon} t_\varepsilon = \infty,
\end{equation*}
which is just the product condition.

\begin{theorem}[Window/profile cut-off for small exponents $\gamma<1/3$]\label{thm:small_gamma_cutoff}\hfill

	\noindent
	Let $0<\gamma<\tfrac{1}{3}$, let $k\in\mathbb{N}$, and for $1\le j\le k$ let $\psi_{j,\varepsilon}$ be the $j$-th non-constant eigenfunction of $-\mathcal L_\varepsilon$ with eigenvalue $\lambda_{j,\varepsilon}=\lambda_j\varepsilon^{\frac{\gamma-1}{1+\gamma}}$. Let the initial density be as in \cref{eq:initialcond} with $\delta(\varepsilon)=\varepsilon^{\frac{1-\gamma}{1+\gamma}}$.
	For $x_0\neq0$, with the notation of \cref{lem:pointwise_smallgamma,lem:time_smallgamma} let $t_\varepsilon$ be the unique time defined by \cref{eq:teps_def} and set $w_\varepsilon=\varepsilon^{\frac{1-\gamma}{1+\gamma}}$. Then for any fixed $r\in\mathbb{R}$ it follows that
	\begin{equation*}
		\lim_{\varepsilon\to0} d_{k,\varepsilon}(t_\varepsilon+r w_\varepsilon,x_0)
		= e^{-\lambda_k r}.
	\end{equation*}
	In particular, the truncated $\chi^2$--distance exhibits window cut-off (indeed profile cut-off) with cut-off time $t_\varepsilon$, window $w_\varepsilon$ and profile function $\mathfrak p(r)=e^{-\lambda_k r}$, $r\in \mathbb{R}$.

	For $x_0=0$, let
	\begin{equation*}
		E_k:=\{1\leq j\leq k:\ \psi_j(0)\neq 0\}.
	\end{equation*}
	If $E_k=\emptyset$, then $d_{k,\varepsilon}(t,0)=0$ for all $t\geq 0$. Otherwise, letting $m:=\max E_k$, the centered initial datum exhibits window cut-off (indeed profile cut-off) with cut-off time determined by the condition
	\begin{equation*}
		e^{-2\lambda_m \varepsilon^{\frac{\gamma-1}{1+\gamma}} t_\varepsilon}
		\left(\int \varrho_{\varepsilon,0}(x)\psi_{m,\varepsilon}(x)\,\dx\right)^2=1,
	\end{equation*}
	the same time window $w_\varepsilon:=\varepsilon^{\frac{1-\gamma}{1+\gamma}}$, and profile function $\mathfrak p(r)=e^{-\lambda_m r}$.
\end{theorem}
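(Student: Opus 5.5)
The plan is to follow the structure of the proof of \cref{thm:main} in \cref{sec:proofuno}, replacing \cref{lem:pointwise_intro,lem:time_intro} by \cref{lem:pointwise_smallgamma,lem:time_smallgamma}. By \cref{cor:separation},
\begin{equation*}
d_{k,\varepsilon}^2(t_\varepsilon+rw_\varepsilon,x_0)=\sum_{i=1}^k e^{-2\lambda_i r}\,T_{i,\varepsilon},
\qquad
T_{i,\varepsilon}:=e^{-2\lambda_{i,\varepsilon}t_\varepsilon}\Big(\int\varrho_{\varepsilon,0}\psi_{i,\varepsilon}\dx\Big)^2 .
\end{equation*}
Here we use $\lambda_{i,\varepsilon}w_\varepsilon=\lambda_i$. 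By definition of $t_\varepsilon$ in \cref{eq:teps_def}, $T_{k,\varepsilon}=1$ exactly. Hence the claimed profile $e^{-\lambda_k r}$ follows once I show $T_{i,\varepsilon}\to0$ for every $i<k$.

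For $x_0\neq0$, set $S_i:=S_i(x_0,\varepsilon)$ as in \cref{lem:time_smallgamma}. By \cref{lem:pointwise_smallgamma},
\begin{equation*}
\log T_{i,\varepsilon}=2S_i-\tfrac{2\lambda_i}{\lambda_k}S_k+\mathsf O(1),
\end{equation*}
using $\lambda_{i,\varepsilon}t_\varepsilon=\tfrac{\lambda_i}{\lambda_k}\lambda_{k,\varepsilon}t_\varepsilon$ and $2\lambda_{k,\varepsilon}t_\varepsilon=2S_k+\mathsf O(1)$. Each monomial of $S_i$ with indices $(j,d)$ carries the factor $\lambda_i^{\,j-d}$. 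The combination above therefore contributes
\begin{equation*}
2\hat A_{j,j-d}\,\varepsilon^{\frac{\alpha_{j,d}-1}{1+\gamma}}|x_0|^{1-\alpha_{j,d}}\,\lambda_i\bigl(\lambda_i^{\,j-d-1}-\lambda_k^{\,j-d-1}\bigr).
\end{equation*}
This vanishes identically on the diagonal $j-d=1$. In particular the leading term $(j,d)=(1,0)$ cancels, which is the "same rate for all modes" phenomenon. The key step is to identify the dominant surviving term, which I expect to be $(j,d)=(2,0)$, with $\alpha=3\gamma$ and exponent $\frac{3\gamma-1}{1+\gamma}<0$ since $\gamma<1/3$. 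Its coefficient is $\lambda_i(\lambda_i-\lambda_k)\hat A_{2,2}$, and $\hat A_{2,2}=A_{2,2}/(1-3\gamma)>0$ by \cref{lem:general-shape}. So this term tends to $-\infty$, matching the remark after \cref{thm:main:sub}.

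It remains to check that every other surviving term is of strictly lower order. These are the terms with $j-d\geq2$ and $(j,d)\neq(2,0)$, so the claim is $\alpha_{j,d}>3\gamma$ for them. If $d=0$ and $j\ge3$, then $\alpha=(2j-1)\gamma\ge5\gamma$. If $d\ge1$, then $\alpha\ge d+(d+3)\gamma\ge1+4\gamma>3\gamma$. Terms with $\alpha_{j,d}>1$ tend to zero and are absorbed in $\mathsf O(1)$, and the finitely many remaining terms are dominated by the $(2,0)$ one. In the resonant cases $N=\frac{1+\gamma}{2\gamma}$ the replaced log term is $\mathsf O(\log\varepsilon)$, which is also negligible. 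The only exception would be $N=2$, i.e.\ $\gamma=1/3$, which is excluded here. The main obstacle I expect is making sure the implied constants in \cref{lem:pointwise_smallgamma} are genuinely $\varepsilon$-independent for each fixed $i\le k$. Only then does the divergence of the $(2,0)$ term beat the $\mathsf O(1)$ errors; otherwise I would have to keep track of the expansion up to order $N$.

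For $x_0=0$ I would repeat the computation of \cref{sec:proofuno} using \cref{eq:center-even} and \cref{eq:teps_bounds_center}. Odd modes vanish exactly by parity, as in the proof of \cref{lem:pointwise_intro}. For even modes $j<m$, the prefactor term gives $2\frac{\gamma(1-\gamma)}{1+\gamma}\bigl(1-\frac{\lambda_j}{\lambda_m}\bigr)\log\varepsilon\to-\infty$. The diagonal $j-d=1$ terms cancel again. The surviving off-diagonal terms, the leading one being $(2,0)$ with exponent $\frac{\gamma(3\gamma-1)}{1+\gamma}<0$, carry the same negative sign $\lambda_j(\lambda_j-\lambda_m)$, so they only reinforce the divergence to $-\infty$. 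Hence $T_{j,\varepsilon}\to0$ for $j<m$, $T_{m,\varepsilon}=1$, and the limit is $e^{-\lambda_m r}$.
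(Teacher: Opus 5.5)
Your proposal is correct and follows the paper's proof essentially step for step. The shared structure is: the decomposition $d_{k,\varepsilon}^2(t_\varepsilon+rw_\varepsilon,x_0)=\sum_i e^{-2\lambda_i r}T_{i,\varepsilon}$ with $T_{k,\varepsilon}=1$; cancellation of all diagonal terms $j-d=1$ in $2S_i-\frac{2\lambda_i}{\lambda_k}S_k$; and the dominant $(2,0)$ term, with negative coefficient $2\lambda_i(\lambda_i-\lambda_k)\hat A_{2,2}$ and exponent $\frac{3\gamma-1}{1+\gamma}<0$ (respectively $\frac{\gamma(3\gamma-1)}{1+\gamma}$ in the centered case), which forces $T_{i,\varepsilon}\to0$ for $i<k$. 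Your explicit checks that every other surviving term has $\alpha_{j,d}>3\gamma$, and that the resonant logarithmic term is negligible, fill in details the paper only asserts.
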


\begin{proof}
	As in \cref{subsection:proofs}, write the vector entries
	\begin{equation*}
		T_{i,\varepsilon}:=
		e^{-2\lambda_i \varepsilon^{\frac{\gamma-1}{1+\gamma}} t_\varepsilon}
		\Bigl( \int \varrho_{\varepsilon,0}(x)\psi_{i,\varepsilon}(x)\,\dx \Bigr)^2,
		\qquad i=1,\dots,k.
	\end{equation*}
	By \cref{lem:pointwise_smallgamma} the squared Fourier coefficient admits the WKB expansion
	\begin{equation*}
		\Big( \int \varrho_{\varepsilon,0}\psi_{i,\varepsilon} \dx\Big)^2
		\approx \exp\Big( \sum_{d=0}^{N-1}\sum_{j=d+1}^{N} 2\lambda_i^{\,j-d}\, \hat A_{j,j-d}(\gamma)\,
		\varepsilon^{\frac{\alpha_{j,d}-1}{1+\gamma}}
		|x_0|^{\,1-\alpha_{j,d}} \Big),
	\end{equation*}
	with $N=\lceil(1+\gamma)/(2\gamma)\rceil$ and $\alpha_{j,d}$ as in \cref{eq:alpha_def}. Substitute the bounds of $t_\varepsilon$ from \cref{lem:time_smallgamma} (equivalently \cref{eq:teps_bounds}) and combine the resulting exponents, rearranging yields, up to an additive constant $C$ independent of $\varepsilon$,
	\begin{equation}\label{eq:single-sum}
		\begin{aligned}
			 & -2\lambda_i \varepsilon^{\frac{\gamma-1}{1+\gamma}} t_\varepsilon
			+\sum_{d=0}^{N-1}\sum_{j=d+1}^{N}
			2\lambda_i^{\,j-d} \hat A_{j,j-d}(\gamma)\,
			\varepsilon^{\frac{\alpha_{j,d}-1}{1+\gamma}}
			|x_0|^{\,1-\alpha_{j,d}}                                             \\
			 & \qquad\le
			\sum_{d=0}^{N-1}\sum_{j=d+1}^{N}
			\biggl[
			\Big(\lambda_i^{\,j-d}-\frac{\lambda_i}{\lambda_k}\lambda_k^{\,j-d}\Big)
			2 \hat A_{j,j-d}(\gamma)\,
			\varepsilon^{\frac{\alpha_{j,d}-1}{1+\gamma}}
			|x_0|^{\,1-\alpha_{j,d}}
			\biggr] + C.
		\end{aligned}
	\end{equation}
	Inspect the leading terms in the bracket of \cref{eq:single-sum}. Whenever $j-d=1$, that is, $j=d+1$, the coefficient vanishes since
	\begin{equation*}
		\lambda_i^{\,j-d}-\frac{\lambda_i}{\lambda_k}\lambda_k^{\,j-d}
		=
		\lambda_i-\frac{\lambda_i}{\lambda_k}\lambda_k
		=0.
	\end{equation*}
	Hence, all terms with $j-d=1$ cancel. Among the remaining terms, the smallest exponent of $\varepsilon$ is attained for $(j,d)=(2,0)$, for which
	\begin{equation*}
		\alpha_{2,0}-1 = 3\gamma-1.
	\end{equation*}
	Since $\gamma<\tfrac{1}{3}$, this exponent is negative, and the corresponding coefficient
	\begin{equation*}
		2\Big(\lambda_i^{2-0}-\frac{\lambda_i}{\lambda_k}\lambda_k^{2-0}\Big)\hat A_{2,2}
		=2\lambda_i\big(\lambda_i-\lambda_k\big)\hat A_{2,2}
	\end{equation*}
	is strictly negative for $i<k$ (since $\lambda_i<\lambda_k$) and
	\begin{equation*}
		\hat A_{2,2} = \frac{A_{2,2}}{1-3\gamma} >0,
	\end{equation*}
	by using \cref{eq:Akk_closed} and the fact that $0 < \gamma < \tfrac{1}{3}$.
	Consequently, the bracket in \cref{eq:single-sum} is dominated by a negative term which blows up to $-\infty$ as $\varepsilon\to0$; hence $T_{i,\varepsilon}\to0$ for every $i<k$. By construction (see \cref{eq:teps_def}) the $i=k$ component satisfies $T_{k,\varepsilon}=1$ for all $\varepsilon$.

	Therefore, the vector $V_{k,\varepsilon}=(T_{1,\varepsilon},\dots,T_{k,\varepsilon})$ has a single non-vanishing entry in the limit, namely the $k$-th entry equal to $1$, while the window factor
	$W_k=\{e^{-2\lambda_i r}\}_{i=1}^k$ is independent of $\varepsilon$. It follows that for any fixed $r\in\mathbb{R}$
	\begin{equation*}
		\lim_{\varepsilon\to0} d_{k,\varepsilon}(t_\varepsilon+r w_\varepsilon,x_0)
		=\sqrt{\lim_{\varepsilon\to0} V_{k,\varepsilon}\cdot W_k}
		=e^{-\lambda_k r}.
	\end{equation*}
	This proves window (indeed profile) cut-off with profile $\mathfrak p(r)=e^{-\lambda_k r}$ for $x_0\neq 0$.

	The centered case $x_0=0$ is treated as in the centered part of the proof of \cref{thm:main}. If $E_k=\emptyset$, then every Fourier coefficient vanishes by parity and $d_{k,\varepsilon}(t,0)=0$ for all $t\geq 0$. Otherwise, let $m:=\max E_k$ and define $t_\varepsilon$ through the $m$-th mode. Using \cref{eq:center-even,eq:teps_bounds_center}, for every $j\in E_k$ with $j<m$ we obtain
	\begin{align*}
		 & -2\lambda_j \varepsilon^{\frac{\gamma-1}{1+\gamma}} t_\varepsilon
		+2\frac{\gamma(1-\gamma)}{1+\gamma}\log\varepsilon
		+2\sum_{d=0}^{N-1}\sum_{l=d+1}^{N}
		\lambda_j^{\,l-d}\,\hat A_{l,l-d}(\gamma)\,
		\varepsilon^{\frac{\gamma(\alpha_{l,d}-1)}{1+\gamma}}
		\\
		 & \qquad\le
		2\frac{\gamma(1-\gamma)}{1+\gamma}\Bigl(1-\frac{\lambda_j}{\lambda_m}\Bigr)\log\varepsilon
		+R_{j,\varepsilon},
	\end{align*}
	where $R_{j,\varepsilon}$ is bounded above for small $\varepsilon$; indeed, after the cancellation of the terms with $l-d=1$, its first non-vanishing contribution is the $(l,d)=(2,0)$ term, whose exponent $\frac{\gamma(3\gamma-1)}{1+\gamma}$ is negative and whose coefficient is also negative since $\lambda_j<\lambda_m$. Therefore, the whole exponent tends to $-\infty$ as $\varepsilon\to0$, and hence
	\begin{equation*}
		e^{-2\lambda_j \varepsilon^{\frac{\gamma-1}{1+\gamma}} t_\varepsilon}
		\left(\int \varrho_{\varepsilon,0}(x)\psi_{j,\varepsilon}(x)\,\dx\right)^2\to 0
		\qquad\text{for every } j<m.
	\end{equation*}
	By construction the $m$-th mode is normalized to equal $1$, and therefore
	\begin{equation*}
		d_{k,\varepsilon}(t_\varepsilon+r w_\varepsilon,0)\to e^{-\lambda_m r}.
	\end{equation*}
\end{proof}

\section{Scaling of the generator \texorpdfstring{$\mathcal{L}_\varepsilon$}{Lepsilon}}\label{ap:tools}
In this section, for completeness we give the proof of
\cref{lem:g:scaling}.
\begin{proof}[Proof of \cref{lem:g:scaling}]
	Let $\rho:=\varepsilon^{\frac{1}{1+\gamma}}$ and set $u_\varepsilon(x):=u(x/\rho)$. Writing $y:=x/\rho$, the chain rule gives
	\begin{equation*}
		u_\varepsilon'(x)=\rho^{-1}u'(y),
		\qquad
		u_\varepsilon''(x)=\rho^{-2}u''(y).
	\end{equation*}
	Since $V'(\rho y)=\rho^\gamma V'(y)$ and $\varepsilon=\rho^{1+\gamma}$, we obtain
	\begin{equation*}
		\mathcal{L}_\varepsilon u_\varepsilon(x)
		=\varepsilon u_\varepsilon''(x)-V'(x)u_\varepsilon'(x)
		=\rho^{\gamma-1}\bigl(u''(y)-V'(y)u'(y)\bigr)
		=\rho^{\gamma-1}(\mathcal{L}_1u)(y).
	\end{equation*}
	Hence, if $\mathcal{L}_1u=f$, then
	\begin{equation*}
		\mathcal{L}_\varepsilon u_\varepsilon(x)
		=\varepsilon^{\frac{\gamma-1}{\gamma+1}}f\left(x\varepsilon^{-\frac{1}{1+\gamma}}\right).
	\end{equation*}
	If instead $-\mathcal{L}_1u=\lambda u$, the same identity gives
	\begin{equation*}
		-\mathcal{L}_\varepsilon u_\varepsilon
		=\lambda\varepsilon^{\frac{\gamma-1}{\gamma+1}}u_\varepsilon.
	\end{equation*}
	Finally, using the change of variables $x=\rho y$ and $V(\rho y)=\rho^{1+\gamma}V(y)=\varepsilon V(y)$,
	\begin{equation*}
		\|u_\varepsilon\|_{L^2(C_\varepsilon e^{-V/\varepsilon})}^2
		=C_\varepsilon\rho\int_{\mathbb{R}} |u(y)|^2 e^{-V(y)}\,\dy.
	\end{equation*}
	Moreover,
	\begin{equation*}
		C_\varepsilon^{-1}=\int_{\mathbb{R}} e^{-V(x)/\varepsilon}\,\dx
		=\rho\int_{\mathbb{R}} e^{-V(y)}\,\dy
		=\rho C_1^{-1},
	\end{equation*}
	so $C_\varepsilon\rho=C_1$. Therefore,
	\begin{equation*}
		\|u_\varepsilon\|_{L^2(C_\varepsilon e^{-V/\varepsilon})}^2
		=\|u\|_{L^2(C_1 e^{-V})}^2,
	\end{equation*}
	which concludes the proof.
\end{proof}

\section*{{Statements and declarations}}

\noindent
\textbf{Acknowledgments.} Gerardo Barrera would like to express his gratitude to the Center for Mathematical Analysis, Geometry and Dynamical Systems CAMGSD and Instituto Superior T\'ecnico (IST) Lisbon, Portugal
for all the facilities used along the realization of this work.
Benny Avelin would also like to thank the CAMGSD and IST for support and hospitality where partial work on this paper was undertaken.

\noindent
\textbf{Funding.}
The research of Gerardo Barrera is supported by Horizon Europe Marie Sk\l{}odowska-Curie Actions Staff Exchanges (project no.\ 101183168 -- LiBERA). Also, the research of Gerardo Barrera is partially funded by Funda\c{c}\~ao para a Ci\^encia e Tecnologia (FCT), Portugal, through grant FCT/Portugal  project no. UID/04459/2025 with DOI identifier
10-54499/UID/04459/2025.
The research of Benny~Avelin has partially been supported by [Swedish Research Council dnr: 2019-04098].

\noindent
\textbf{Availability of data and material.}
Data sharing not applicable to this article as no data-sets were generated or analyzed during the current study.

\noindent
\textbf{Conflict of interests.} The authors declare that they have no conflict of interest.

\noindent
\textbf{Ethical approval.} Not applicable.

\noindent
\textbf{Authors' contributions.}
All authors have contributed equally to the manuscript.

\noindent
\textbf{Disclaimer.}
Funded by the European Union. Views and opinions expressed are however those of the author(s) only and do not necessarily reflect those of the European Union or the European Education and Culture Executive Agency (EACEA). Neither the European Union nor EACEA can be held responsible for them.


\begin{thebibliography}{50}

	%A
	\bibitem{AD}
	Aldous, D. \& Diaconis, P.:
	Shuffling cards and stopping times.
	\textit{Amer. Math. Monthly} \textbf{93}, no. 5, (1986), 333--348.
	%\url{https://doi.org/10.2307/2323590}
	\MR{0841111}

	\bibitem{AKL}
	Arnaudon, M., Coulibaly-Pasquier, K. \& Miclo, L.:
	On the separation cut-off phenomenon for Brownian motions on high dimensional spheres.
	\textit{Bernoulli } \textbf{30}, no. 2, (2024), 1007--1028.
	%%\url{https://doi.org/10.3150/23-BEJ1622}
	\MR{4699543}

	\bibitem{Avelin}
	Avelin, B. \&  Karlsson, A.:
	Deep limits and a cut-off phenomenon for neural networks.
	\textit{J. Mach. Learn. Res.} \textbf{23}, no. 191, 29 pp., (2022).
	%%\url{http://jmlr.org/papers/v23/21-0431.html}
	\MR{439458}

	%B
	\bibitem{BGL}
	Bakry, D., Gentil, I. \& Ledoux, M.:
	\textit{Analysis and geometry of Markov diffusion operators}.
	Springer, Cham, (2014).
	%%\url{https://doi.org/10.1007/978-3-319-00227-9}
	\MR{3155209}

	\bibitem{BCJ}
	Barrera, G., Da Costa, C. \& Jara, M.:
	Gradual convergence for Langevin dynamics on a degenerate potential.
	\textit{Stochastic Process. Appl.} \textbf{184}, Paper No. 104601, 28 pp., (2025).
	%%\url{https://doi.org/10.1016/j.spa.2025.104601}
	\MR{4864978}

	\bibitem{BARESQ}
	Barrera, G. \& Esquivel, L.:
	Profile cut-off phenomenon for the ergodic Feller root process.
	\textit{Stochastic Process. Appl.} \textbf{183}, Paper No. 104587, 27 pp., (2025).
	%%\url{https://doi.org/10.1007/s10955-024-03308-6}
	\MR{4791608}

	\bibitem{BHPPshell}
	Barrera, G., H\"ogele, M.A., Pardo, J.C. \& Pavlyukevich, I.:
	Cutoff ergodicity bounds in Wasserstein distance for a viscous energy shell model with L\'evy noise.
	\textit{J. Stat. Phys.} \textbf{191}, no. 9, Paper No. 105, 24 pp., (2024).
	%%\url{https://doi.org/10.1016/j.spa.2025.104587}
	\MR{4860944}

	\bibitem{BJ}
	Barrera, G. \& Jara, M.:
	Abrupt convergence of stochastic small perturbations of one dimensional dynamical systems.
	\textit{J. Stat. Phys.} \textbf{163}, no. 1, (2016),
	113--138.
	%%\url{https://doi.org/10.1007/s10955-016-1468-1}
	\MR{3472096}

	\bibitem{BJ1}
	Barrera, G. \& Jara, M.:
	Thermalisation for small random perturbations of dynamical systems.
	\textit{Ann. Appl. Probab.} \textbf{30}, no. 3, (2020), 1164--1208.
	%%\url{https://doi.org/10.1214/19-AAP1526}
	\MR{4133371}

	\bibitem{BBF1}
	Barrera, J., Bertoncini, O. \& Fern\'andez, R.:
	Abrupt convergence and escape behavior for birth and death chains.
	\textit{J. Stat. Phys.} \textbf{137}, no. 4, (2009),
	595--623.
	%%\url{https://doi.org/10.1007/s10955-009-9861-7}
	\MR{2565098}

	\bibitem{BY} Barrera, J. \& Ycart, B.:
	Bounds for left and right window cutoffs.
	\textit{ALEA Lat. Am. J. Probab. Math. Stat.} \textbf{11}, no. 2, (2014), 445--458.
	%%\url{https://alea.impa.br/articles/v11/11-19.pdf}
	\MR{3265085}

	\bibitem{BHP}
	Basu, R., Hermon, J. \& Peres, Y.:
	Characterization of cutoff for reversible Markov chains.
	\textit{Ann. Probab.} \textbf{45}, no. 3, (2017), 1448--1487.
	%%\url{https://doi.org/10.1214/16-AOP1090}
	\MR{3650406}

	\bibitem{BOK10}
	Bayati, B., Owahi, H. \& Koumoutsakos, P.:
	A cutoff phenomenon in accelerated stochastic simulations of chemical kinetics via flow averaging (FLAVOR-SSA).
	\textit{J. Chem. Phys.} \textbf{133}, no. 24, Paper No. 244117, 6 pp., (2010). %%\url{https://doi.org/10.1063/1.3518419}

	\bibitem{Bogachev}
	Bogachev, V., Krylov, N.,
	R\"ockner, M. \& Shaposhnikov, S.: \textit{Fokker--Planck--Kolmogorov equations}.
	Mathematical Surveys and Monographs \textbf{207}. American Mathematical Society, Providence, RI, (2015).
	%%\url{https://doi.org/10.1090/surv/207}
	\MR{3443169}

	\bibitem{Bolley}
	Bolley, F., Gentil, I. \& Guillin, A.:
	Convergence to equilibrium in Wasserstein distance for Fokker--Planck equations.
	\textit{J. Funct. Anal.} \textbf{263}, no. 8, (2012), 2430--2457.
	%%\url{https://doi.org/10.1016/j.jfa.2012.07.007}
	\MR{2964689}

	\bibitem{BorMan}
	Bordenave, C. \& Mathien, J.:
	Cutoff for geodesic paths on hyperbolic manifolds.
	\textit{Commun. Math. Phys.} \textbf{407}, no. 111,  27 pp., (2026).
	%%\url{https://doi.org/10.1007/s00220-026-05607-3}

	%C
	\bibitem{ChenGY}
	Chen G.Y.:
	Mixing reversible Markov chains in the max-$\ell_2$-distance.
	\textit{ALEA, Lat. Am. J. Probab. Math. Stat.} \textbf{21}, no. 2, (2024), 1727--1767.
	%%\url{https://doi.org/10.30757/ALEA.v21-65}
	\MR{4801341}

	\bibitem{ChenHsuSheu}
	Chen, G.-Y., Hsu, J.-M. \&  Sheu, Y.-C.:
	The $L_2$-cutoffs for reversible Markov chains
	\textit{Ann. Appl. Probab.} \textbf{27}, no. 4, (2017), 2305--2341.
	%%\url{https://doi.org/10.1214/16-AAP1260}
	\MR{3693527}

	\bibitem{ChenSCL}
	Chen G.Y. \& Saloff-Coste, L.:
	Spectral computations for birth and death chains.
	\textit{Stochastic Process. Appl.} \textbf{124}, no. 1, 848--882, (2014).
	%%\url{https://doi.org/10.1016/j.spa.2013.10.002}
	\MR{3131316}

	\bibitem{CSC}
	Chen, G.Y. \& Saloff-Coste, L.:
	The cutoff phenomenon for ergodic Markov processes.
	\textit{Electron. J. Probab.} \textbf{13}, no. 3, (2008), 26--78.
	%%\url{https://doi.org/10.1214/EJP.v13-474}
	\MR{2375599}

	\bibitem{ChenGYSaloff2024}
	Chen G.Y. \& Saloff-Coste, L.:
	The $L_2$-cutoff for reversible Markov processes.
	\textit{J. Funct. Anal.} \textbf{258}, no. 7, (2010), 2246--2315.
	%%\url{https://doi.org/10.1016/j.jfa.2009.10.017}
	\MR{2584746}

	%D
	\bibitem{DI} Diaconis, P.:
	The cutoff phenomenon in finite Markov chains.
	\textit{Proc. Nat. Acad. Sci. U.S.A.} \textbf{93}, no. 4, (1996), 1659--1664.
	%%\url{https://doi.org/10.1073/pnas.93.4.1659}
	\MR{1374011}

	%E

	%F

	%G
	\bibitem{MHR}
	Gradinaru, M., Herrmann, S. \& Roynette, B.:
	A singular large deviations phenomenon.
	\textit{Ann. Inst. H. Poincar\'e Probab. Statist.} \textbf{37} no. 5, 555--580, (2001).
	%%\url{https://doi.org/10.1016/S0246-0203(01)01075-5}
	\MR{1851715}

	%H 
	\bibitem{Hopf}
	Hopf, E.:
	A remark on linear elliptic differential equations of second order.
	\textit{Proc. Amer. Math. Soc.} \textbf{3}, no. 5, 791--793, (1952).
	%%\url{https://doi.org/10.1090/S0002-9939-1952-0050126-X}
	\MR{0050126}

	%I

	%J
	\bibitem{Jac}
	Jacquot, S.:
	Comportement asymptotique de la seconde valeur propre des processus de Kolmogorov. \textit{J. Multivariate Anal.} \textbf{40}, no. 2, 335--347, (1992).  %%\url{https://doi.org/10.1016/0047-259X(92)90030-J}
	\MR{1150617}

	\bibitem{Ji}
	Ji, M., Shen, Z. \& Yi, Y.:
	Convergence to equilibrium in Fokker--Planck equations.
	\textit{J. Dyn. Diff. Equat.} \textbf{31}, no. 3, (2019), 1591--1615.
	%%\url{https://doi.org/10.1007/s10884-018-9705-8}
	\MR{3992083}

	%K
	\bibitem{Kastoryano12}
	Kastoryano, M.,  Reeb, D. \& Wolf, M.:
	A cutoff phenomenon for quantum Markov chains.
	\textit{J. Phys. A} \textbf{45}, no. 7, Paper No. 075307, 16 pp., (2012). %%\url{https://doi.org/10.1088/1751-8113/45/7/075307}
	\MR{2881085}

	\bibitem{KU}
	Kulik, A.:
	\textit{Ergodic behavior of Markov processes.
		With applications to limit theorems}. De Gruyter Studies in Mathematics \textbf{67}. De Gruyter, Berlin, (2018).
	%%\url{https://doi.org/10.1515/9783110458930}
	\MR{3791835}

	%L
	\bibitem{booklelievreetalt}
	Lelièvre, T., Rousset, M. \& Stoltz, G.:
	\textit{Free energy computations. A mathematical perspective}.
	Imperial College Press, London, (2010).
	%%\url{https://doi.org/10.1142/p579}
	\MR{2681239}

	\bibitem{Levin}
	Levin, D., Peres, Y. \& Wilmer, E.:
	\textit{Markov chains and mixing times}.
	With a chapter by James G. Propp and D. Wilson.
	Amer. Math. Soc., Providence, RI, (2009).
	%%\url{https://doi.org/10.1090/mbk/058}
	\MR{2466937}

	%M
	\bibitem{Maobook}
	Mao, X.:
	\textit{Stochastic differential equations and applications}.
	Second edition. Horwood Publishing Limited, Chichester, (2008).
	%%\url{https://doi.org/10.1533/9780857099402}
	\MR{2380366}

	\bibitem{Murray}
	Murray, R. \& Pego, R.:
	Cutoff estimates for the linearized Becker--D\"oring equations.
	\textit{Commun. Math.
		Sci.} \textbf{15}, no. 6,
	1685--1702, (2016). %%\url{https://doi.org/10.4310/CMS.2017.v15.n6.a10}
	\MR{3668953}

	%N

	%O
	\bibitem{Oh}
	Oh, S. \& Kais, S.:
	Cutoff phenomenon and entropic uncertainty for random quantum circuits.
	\textit{Electron. Struct.} \textbf{5},  no. 3,  7 pp., (2023).
	%%\url{https://doi.org/10.1088/2516-1075/acf2d3}

	\bibitem{Olver}
	Olver, F.W.J.:
	\textit{Asymptotics and special functions}. Reprint of the 1974 original Academic Press, New York, AKP Classics, A K Peters, Wellesley, MA, (1997).    %%\url{https://doi.org/10.1201/9781439864548}
	\MR{1429619}

	%P
	\bibitem{pages}
	Pag\`es, G. \& Panloup, F.:
	Ergodic approximation of the distribution of a stationary diffusion: rate of convergence.
	\textit{Ann. Appl. Probab.} \textbf{22} no. 3, (2012), 1059--1100.
	%%\url{https://doi.org/10.1214/11-AAP779}
	\MR{2977986}

	%Q

	%R
	\bibitem{Royer2007}
	Royer, G.:
	\textit{An initiation to logarithmic Sobolev inequalities}.
	American Mathematical Society, Providence, RI, (2007).
	\MR{2352327}

	%S
	\bibitem{Salez}
	Salez, J.:
	Cutoff for non-negatively curved diffusions.
    \textit{J. Eur. Math. Soc.} \textbf{26} no. 11, (2024), 4375--4392.

	\bibitem{SVbook}
	Stroock, D. \& Varadhan, S.R.S.:
	\textit{Multidimensional diffusion processes}.
	Classics Math. Springer--Verlag, Berlin, (2006).
	%%\url{https://doi.org/10.1007/3-540-28999-2}
	\MR{2190038}

	%T
	\bibitem{Titchmarsh}
	Titchmarsh, E.C.:
	\textit{Eigenfunction expansions associated with second-order differential equations}. Vol. 2. Oxford, Clarendon Press, (1958).
	\MR{0094551}

	%U

	%V
	\bibitem{Vernier20}
	Vernier, E.:
	Mixing times and cutoffs in open quadratic fermionic systems.
	\textit{SciPost Phys.} \textbf{9} no. 049, 1--30, (2020).    %%\url{https://doi.org/10.21468/SciPostPhys.9.4.049}
	\MR{4216481}

	\bibitem{Vi2009}
	Villani, C.:
	\textit{Optimal transport. Old and new}.
	Grundlehren der mathematischen Wissenschaften \textbf{338}. Springer--Verlag Berlin, (2009).
	%\url{https://doi.org/10.1007/978-3-540-71050-9}
	\MR{2459454}

	%W

	%X

	%Y

	%Z
\end{thebibliography}
\end{document}